\numberwithin{equation}{section}
\newcommand{\ve}{\mathbf{e}}
\newcommand{\ie}{\mathit{e}}
\newcommand{\cE}{\mathcal{E}}
\newcommand{\p}{\mathbf{P}}
\newcommand{\zm}{\mathbb{Z}^{1,n}}
\newcommand{\pos}{\Phi^+_n}
\newcommand{\nee}{\Phi^-_n}
\newtheorem*{thma}{Theorem A}
\newtheorem*{thmb}{Theorem B}
\newtheorem*{thmc}{Theorem C}
\newtheorem{thm}{Theorem}[section]
\newtheorem{lem}[thm]{Lemma}
\newtheorem{defn}[thm]{Definition}
\newtheorem{cor}[thm]{Corollary}
\newtheorem{prop}[thm]{Proposition}
\title{The Dynamical Degrees of Rational Surface Automorphisms}
\author{Kyounghee Kim}
\address{Department of Mathematics\\
         Florida State University\\
         Tallahassee, FL 32306}
\email{kim@math.fsu.edu}
\keywords{Anti-Canonical Rational surfaces, Rational Surface Automorphisms, Dynamical degree, Coxeter group, Marked blowups, Marked cubics}
\begin{document}

\maketitle

\begin{abstract}
The induced action on the Picard group of a rational surface automorphism with positive entropy can be identified with an element of the Coxeter group associated to $E_n, n\ge 10$ diagram. It follows that the set of dynamical degrees of rational surface automorphisms is a subset of the spectral radii of elements in the Coxeter group. This article concerns the realizability of an element of the Coxeter group as an automorphism on a rational surface with an irreducible reduced anti-canonical curve. For any unrealizable element, we explicitly construct a realizable element with the same spectral radius. Hence, we show that the set of dynamical degrees and the set of spectral radii of the Coxeter group are, in fact, identical. This has been shown by Uehara in \cite{Uehara:2010} by explicitly constructing a rational surface automorphism. This construction depends on a decomposition of an element of the Coxeter group. Our proof is conceptual and provides a simple description of elements of the Coxeter group, which are realized by automorphisms on anti-canonical rational surfaces.\end{abstract}

\section{Introduction}\label{S:intro}
A compact complex surface $S$ admitting an automorphism is rare. However, it is shown \cite{Bedford-Kim:2006, McMullen:2007, Bedford-Kim:2010} that there are infinite families of rational surfaces admitting automorphisms. Several methods for constructing rational surfaces and studying the dynamics of automorphisms on them have been considered, and the dynamical properties of such automorphisms have been studied in several recent papers (for example, \cite{Bedford-Kim:rotation, Blanc:2008, Cantat-Dolgachev, Lesieutre:2021}).

Suppose $S$ is a rational surface obtained by blowing up a set $P$ of $n$ (possibly infinitely near) points in $\p^2$. The Picard group $Pic(S)$  is generated by the class $\ve_0$ of a strict transform of a generic line in $\p^2$ and the classes $\ve_i$ of total transforms of exceptional curves over points in $P$. There is a natural isomorphism $\phi$ between $\mathbb{Z}^{1,n}$ with a Minkowski inner product and a Picard group $Pic(S)$. By Nagata \cite{Nagata, Nagata2}, if $F:S \to S$ is an automorphism with infinite order, then there is an element $\omega$ of the Weyl group $W_n$ generated by a set of reflections acting on $\mathbb{Z}^{1,n}$ such that the induced action $F_* : Pic(S) \to Pic(S)$ is equivalent to $\omega$ via the isomorphism $\phi$, in other words, the following diagram commutes:
\begin{center}
\begin{tikzcd}
\mathbb{Z}^{1,n} \arrow[r, "\omega"] \arrow[d, "\phi",labels=left ]
& \mathbb{Z}^{1,n} \arrow[d, "\phi" ] \\
Pic(S) \arrow[r,  "F_*"]
& Pic(S)
\end{tikzcd}
\end{center}
In this case, we say $F$ \textit{realizes} $\omega \in W_n$. The generators of this Weyl group $W_n$ are reflections through a set of vectors $\alpha_i \in \mathbb{Z}^{1.n}$, $i=0,\dots, n-1$ such that \[ \alpha_0 = e_0- e_1-e_2-e_3,  \qquad \text{and } \qquad \alpha_i = e_i - e_{i+1}, i =1, \dots, n-1.\]This Weyl group is isomorphic to the Coxeter group associated to the Coxeter diagram $E_n$. (See \cite{Humphreys:1990}.)

If $F$ is a rational surface automorphism, the dynamical degree $\delta(F)$ is given by 
\[ \delta(F) = \lim_{n \to \infty} (\text{algebraic degree of } F^n)^{1/n}. \] Let $\Delta$ be the set of dynamical degrees of rational surface automorphisms and $\Lambda$ be the set of spectral radii of Weyl group elements:
\[ \Delta = \{ \delta(F) : F \text{ is a rational surface automorphism.}\}, \qquad \text{ and } \]
\[ \Lambda = \{ \lambda (\omega) : \omega \in W_n \text{ for som } n\ge 1 \}\]
where $\delta$ denotes the dynamical degree and $\lambda$ denotes the spectral radius. According to Diller and Favre \cite{Diller-Favre:2001}, the dynamical degree $\delta(F)$ of an automorphism $F$ is given by the spectral radius of $F_*$. Thus, we have $\Delta \subset \Lambda$. In this article, we show $\Lambda = \Delta$.
\begin{thma}
For any $\lambda \in \Lambda$, there is an element $\omega \in W_{n\ge 1}$  such that $\omega = F_*$ for a rational surface automorphism $F$ with $\delta(F) = \lambda$. Thus we have \[ \Delta = \Lambda.\]
\end{thma}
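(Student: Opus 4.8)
The inclusion $\Delta \subset \Lambda$ is already in hand --- via Diller--Favre together with the commuting square identifying $F_*$ with a Weyl group element $\omega$ --- so the content of Theorem \zeromaxl{} is the reverse inclusion $\Lambda \subset \Delta$. Fix $\lambda \in \Lambda$ and choose $\omega \in W_n$ with $\lambda(\omega)=\lambda$. My plan is to reduce to finding \emph{any} realizable element of the same spectral radius: if some $\omega'$ with $\lambda(\omega')=\lambda$ is realized by an automorphism $F$, then Diller--Favre gives $\delta(F)=\lambda(\omega')=\lambda$, whence $\lambda \in \Delta$. The point is that $\lambda$ is the largest root of the \emph{Salem factor} of the characteristic polynomial of $\omega$ on $\zm$, so it is insensitive to the remaining factors; it therefore suffices to produce a realizable $\omega'$, in a possibly larger $W_m$, whose characteristic polynomial carries the same Salem factor.

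Accordingly, I would first factor the characteristic polynomial of $\omega$ as a Salem factor (with largest root $\lambda$; the degenerate case $\lambda=1$ is elliptic or parabolic and is handled directly) times a product of cyclotomic factors. The cyclotomic part is carried by the roots $\alpha \in \pos$ that are \emph{periodic} under $\omega$, i.e.\ satisfy $\omega^k\alpha=\alpha$ for some $k \ge 1$; these are the finite $\omega$-cycles of $(-2)$-classes, and they are precisely the locus at which the marked-cubic realization can degenerate, forcing prescribed points to collide or the marking to fail. The first step is thus to determine which periodic-root patterns obstruct realization on a surface whose anti-canonical curve is irreducible and reduced, thereby pinning down the \emph{simple description of realizable elements} promised in the abstract.

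The heart of the argument is the explicit marked-blowup construction of $\omega'$. Starting from an irreducible reduced cubic $C \subset \p^2$ --- smooth, nodal, or cuspidal according to the arithmetic of $\lambda$ --- together with an automorphism of $C$, I would select a marking (a configuration of, possibly infinitely near, base points lying on $C$) so that the induced action on $\mathbb{Z}^{1,m}$ keeps the Salem factor of $\omega$ intact while replacing its periodic part by cycles that the construction can honestly realize. Since $\omega'$ then differs from the Salem part of $\omega$ only by adjoining blocks on which the action is a single finite-order permutation, its characteristic polynomial differs from that of $\omega$ by cyclotomic factors alone, and hence $\lambda(\omega')=\lambda(\omega)=\lambda$. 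This spectral bookkeeping is a routine but essential linear-algebra check.

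The step I expect to be the main obstacle is realization itself --- producing an automorphism $F$ of the blown-up surface inducing $\omega'$. In the marked-cubic framework of \cite{McMullen:2007} this reduces to checking that the orbit data prescribed by $\omega'$ is realized by the chosen automorphism of $C$ permuting the marked points, and, crucially, that these points may be taken in sufficiently general position for the strict transform of $C$ to stay irreducible and reduced while still carrying the designed periodic cycles. Reconciling this genericity with the prescribed periodic structure is the crux; once it is in place, Diller--Favre gives $\delta(F)=\lambda(\omega')=\lambda$, and since $\lambda \in \Lambda$ was arbitrary we conclude $\Lambda \subset \Delta$ and therefore $\Delta = \Lambda$.
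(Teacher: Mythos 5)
Your high-level framing is right --- the content is $\Lambda \subset \Delta$, it suffices to exhibit \emph{any} realizable element carrying the same Salem factor, and the periodic roots are indeed where realization can fail --- but the central mechanism of the paper's proof is missing from your sketch, and the substitute you gesture at is both vaguer and partly wrong. The paper does not pass to a larger $W_m$ or swap out cyclotomic blocks. It stays in the same $W_n$ (after conjugating to an essential element, which preserves the spectral radius): take the leading eigenvector $v$ of $\omega$, blow up the points $p(v\cdot e_i - \tfrac13 v\cdot e_0)$ on the fixed cuspidal cubic $\{[1:t:t^3]\}$ to get a marked pair $(S^v,Y^v,\phi^v)$, observe that every nodal root $\alpha$ of this marking satisfies $\alpha\cdot v=0$ (hence is periodic and there are only finitely many), and then correct $\omega$ by composing with a product $s$ of reflections through nodal roots. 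Since those reflections fix $v$, the corrected element $s\circ\omega$ still has $v$ as leading eigenvector, so its spectral radius is still $\lambda$ and it still lies in $\mathrm{Aut}(X,\rho^v)$; the combinatorial lemmas on linked nodal roots then arrange that $s\circ\omega$ sends every nodal root to a positive root, which by Harbourne's criterion ($\omega^{-1}\Phi(\phi)^+\subset\Phi_n^+$) is exactly the condition for membership in $W(S^v,\phi^v)$, hence for realizability. This "multiply by nodal reflections orthogonal to $v$" step is precisely the crux you flag as the main obstacle and leave unresolved; without it the argument does not close.

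Two further points. First, your statement that the cubic is "smooth, nodal, or cuspidal according to the arithmetic of $\lambda$" cannot be right for $\lambda>1$: by Diller's theorem (quoted in the paper), an automorphism with $\delta(F)>1$ stabilizing an irreducible reduced anti-canonical curve forces the plane image to be a cuspidal cubic, a conic with its tangent, or three concurrent lines --- a smooth or nodal cubic has $\mathrm{Pic}_0$ equal to an elliptic curve or $\mathbb{C}^*$, on which the induced multiplier has modulus $1$, contradicting $\delta(F)=\lambda>1$. The paper always uses the cuspidal cubic, precisely because $\mathrm{Pic}_0(X)\cong\mathbb{C}$ lets the multiplier be the Salem number $\lambda$ and makes $\mathrm{Aut}(X,\rho^v)=W_n^v$ automatic. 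Second, your proposed alternative of enlarging $n$ and "adjoining blocks" to replace the periodic part is not developed to the point of being checkable: the obstruction lives in the finite set of nodal roots inside $v^{\perp}$, and adding coordinates does not by itself remove a nodal root that $\omega^{-1}$ sends negative. As written, the proposal identifies the right difficulty but does not supply the idea that resolves it.
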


This theorem was shown by Uehara \cite{Uehara:2010} by constructing an automorphism with a given dynamical degree as a composition of quadratic birational maps. The main idea in \cite{Uehara:2010} is identifying possible \textit{orbit data} (See Section \ref{SS:orbitdata} for the definition.) for quadratic birational maps. The brief sketch of Uehara's construction is the following: Let $\kappa$ be a reflection through $\alpha_0$, and $\kappa_I$ be a reflection through $e_0 -  e_i-e_j-e_k$ for $I \subset\{ 1, \dots, n\}$ with $|I|=3$. Also, let $\Sigma$ be a span of reflections through $\alpha_i, i\ge 1$. With these notation, we see that for any $r \in \Sigma$, we have $r^{-1} \kappa\, r = \kappa_I$ for some $I \subset \{ 1, 2, \dots, n\}$. Thus for $\omega = r_0\, \kappa\, r_1\, \kappa\, r_2\, \cdots\, \kappa\, r_m \in W_n$ with $r_i \in \Sigma, i = 1, \dots, m$ we have 
\[ \omega = r \kappa_{I_m} \cdots  \kappa_{I_1} \] where 

\[ r = r_0 \, r_1\, \cdots \,r_m \in \Sigma, \ \ \ \kappa_{I_j} = (r_j \, r_{j+1}\, \cdots \, r_m)^{-1} \kappa \,  (r_j \, r_{j+1}\, \cdots \, r_m) \quad \text{for } j=1, \dots, m\]  and $I_j=\{ j_1, j_2, j_3\}$ is a set of three distinct integers in $\{1,2, \dots, n\}$. By inspecting the above decomposition, for each $j_i$, one can obtain the smallest non-negative integers $t$ and  $s_k$ such that 
\[ \kappa_{I_{s-1}} \cdots \kappa_{I_1} \omega^t  r  \kappa_{I_m} \cdots \kappa_{I_{j+1}} e_{j_i} = e_{s_k} .\]
This allows to obtain \textit{orbit data} (See Section \ref{SS:orbitdata} for the definition.) for a quadratic birational map $f_j$ whose critical images corresponding $e_{j_1}, e_{j_2}, e_{j_3}$ for each $j=1, \dots, m$. By assuming all $f_j$ fix a given cusp cubic $C = \{ \gamma(t) \}$, we have $e_{j_i} = \gamma(p_{j,i})$ and $f_j|_{C} \,\gamma(t) = \gamma(d_j t + c_j)$ for some $p_{j,j}, d_j, c_j \in \mathbb{C}$. Diller in \cite{Diller:2011} showed that the multiplier $d_j$ and the parameters for three critical images uniquely determine (up to a linear conjugacy) a birational map properly fixing a cubic with arithmetic genus $=1$. By repeatedly applying Diller's construction, Uehara obtained a system of equations of $p_{j, i}, d_j, c_j$ to get an explicit formula for the desired birational map $f = f_m \circ \cdots \circ f_1$.

Uehara's construction required expressing $\omega \in W_n$ as a composition of generators of $W_n$. However, this is a difficult task. Furthermore, the construction depends on the orbit data, which is not uniquely determined by $\omega \in W_n$.

\vspace{1ex}
In this article, we generalize McMullen's construction in \cite{McMullen:2007} by allowing infinitely near points in the base locus with the assumption of the existence of an anti-canonical curve. The isomorphism $\phi:\mathbb{Z}^{1,n} \to Pic(S)$ together with the base points is called the \textit{marked blowup} of $S$. McMullen \cite{McMullen:2007} constructs a rational surface automorphism from a marked blowup with an anti-canonical curve provided that all base points are distinct points in $\p^2$. A brief sketch of McMullen's construction is as follows: 

\vspace{1ex}
Suppose $\omega \in W_n$. If $\phi$ and $\phi\circ \omega$ give two markings for $S$, each marking gives a projection to $\mathbf{P}^2$.  Each marking would have a different base locus, and these two markings are associated with a birational map $f$, which does not necessarily lift to automorphism. 

The most well-known example would be the case with $n=3$, and $\omega$ is the Cremona involution $\kappa$, the reflection through the vector $e_0-e_1-e_2-e_3$. Let $(S,\phi)$ be a marked blowups with $\phi(e_i) = \ve_i$ where $\ve_0$ is the class of the strict transform $\tilde H$ of generic line $H$ in $\p^2$ and $\ve_i$ is the class of the exceptional curve $\mathcal{E}_i$ over a point $p_i \in \p^2$.  Let $\pi:S \to \p^2$ be the corresponding blowup along three distinct points $\{p_1,p_2,p_3\} \in \p^2$. The marked blowup $(S, \phi\circ \kappa)$ with the projection $\pi':S \to \p^2$  satisfies $\phi( \kappa e_i) = \ve'_i$ and thus the class $\ve'_i = \ve_0 - \ve_1-\ve_2-\ve_3 + \ve_i$, $i=1,2,3$ is the class of exceptional curves over a point $q_i \in \p^2$. 
\begin{center}
\begin{tikzcd}[column sep=small]
&  S \arrow["\pi",dl,labels=above left] \arrow[dr,"\pi' "]  \\
\p^2 \arrow[rr, "f", dashed] & & \p^2
\end{tikzcd}
\end{center}
Since $\omega \ie_0 = 2 \ie_0 - \ie_1-\ie_2-\ie_3$, we have $\pi' \tilde H = Q$ where $Q$ is a conic passing through $p_1,p_2,p_3$ while $\pi \tilde H = H$. Also, the exceptional curve $\mathcal{E}_i$ over $p_i$ satisfies $\pi(\mathcal{E}_i) = p_i$, and $\pi'(\mathcal{E}_i) = L'_{j,k}$ where  $L'_{i,j}$ is the line joining $q_i,q_j$. The class of the strict transform of the line joining $p_i,p_j$, $L_{i,j}$ is given by $[L_{i,j} ]= \ve_0 - \ve_i-\ve_j = \ve'_k$.
Thus we see the birational map $f$ is quadratic and satisfies
\[ f: \pi(\mathcal{E}_i) = p_i \mapsto L'_{j,k},\qquad \text{and} \qquad L_{i,j} \mapsto q_k \qquad \{i,j,k\}= \{1,2,3\}.\]

In addition, if there is an automorphism $g$ on $\p^2$ sending one base locus to another, by composition $f\circ g$ of an automorphism $g$ and a birational map $f$, we can obtain an automorphism on $S$ by lifting $f \circ g$. 

\begin{center}
\begin{tikzcd}[column sep=small]
&  S \arrow["\pi'",dl,labels=above left] \arrow[rr, "F"] &&  S \arrow["\pi",dl,labels=above left] \arrow[dr,"\pi' "]  \\
\p^2 \arrow[rr, "g"] & &\p^2 \arrow[rr, "f", dashed] & & \p^2
\end{tikzcd}
\end{center}

To have a dynamical degree $>1$, as stated by McMullen \cite{McMullen:2002}, we need to blow up at least $10$ points in $\p^2$. Thus, if two sets of points $P, P'$ with $|P|=|P'|\ge 10$ are in general position, an automorphism on $\p^2$ sending $P$ to $P'$ is special, or in fact, might not exist. 

\vspace{2ex}
One can overcome this difficulty by imposing the condition of the existence of an anti-canonical curve $Y$. 
If a point $p$ in the base locus has no infinitely near points in the base locus, then $p$ is in the strict transform of $Y$. Suppose $p^{(i)}, i=1, \dots k$ are in the base locus satisfying a point $p^{(i)}$ is an infinitely near point of $p^{(i-1)}$ of order $1$ and no other points are infinitely near to $p^{(1)}$, then there is a successive blowups $\pi_i : S_i = \text{B}\ell_{p^{(i)}} \to S_{i-1}$ of a single point $p^{(i)}$ with $S_0 = \p^2$. It follows that each strict transform of $Y$ in $S_i$ contains $p^{(i)}$. In either case, the set of base points in $\p^2$ belongs to the projection $X=\pi(Y)$. If $X$ is a rational curve with arithmetic genus $1$, it is easy to tell whether there is such an automorphism from $P$ to $P'$.

\vspace{2ex}
We say $\omega \in W_n, n\ge 10$ is \textit{essential} if $\omega$ is not conjugate into (after relabelling $\ie_i$'s if necessary) $W_{n'}$ with $n' < n$. It is shown in \cite{Kim:2022} that there is a non-essential element $\omega \in W_{14}$ such that $\omega$ is not realizable by a rational surface automorphism. This is shown by counting periodic points using the Lefschetz fixed point formula. In fact, this $\omega$ is conjugate to the Coxeter element $\omega_{10} :=s_0 \cdots s_{10} \in W_{10}$, and it is known \cite{Bedford-Kim:2006, McMullen:2007} that $\omega_{10}$ is realized by a rational surface automorphism. In this article, we let $W_n^{ess}$ denote the set of essential elements in $W_n$ and focus on realizing essential elements in $\cup_{n\ge 10} W_n^{ess}$.

\vspace{2ex}
Let $\mathbb{C}^{1,n} = \mathbb{Z}^{1,n} \otimes \mathbb{C}$ with the complex Minkowski form. For each $v \in \mathbb{C}^{1,n}$ we defined a \textit{marked blowup} $(S^v, Y^v, \phi^v) = \text{B}\ell(X,\rho^v)$ (See Section \ref{S:realization} for the explicit construction.) by blowing up points on a cuspidal cubic. Let us denote $\Phi^+_n$ as the set of positive roots and $\Phi^-_n = - \Phi^+_n$ as the set of negative roots. Also, let $\Phi^+_v$ be a set of nodal roots in $(S^v, Y^v, \phi^v)$ and let $\Sigma_v$ be the span of all reflections through nodal roots:
\[ \Sigma_v \ = \ \text{Span} \{ s_\alpha: \alpha \in \Phi^+_v\}, \quad \]
\[  \text{ where} \quad s_\alpha (x) := x + (x \cdot  \alpha ) \alpha \qquad \text{ for } x \in \mathbb{Z}^{1,n}. \]

\begin{thmb}
Let $v \in \mathbb{C}^{1,n}$ and let $\omega$ be an essential element in $W_{n\ge 10}$ with the spectral radius $\lambda$. Suppose $v$ is a leading eigenvector for $\omega$. Then we have
\begin{enumerate}
\item $\omega$ is realized by an automorphism on $(S^v, Y^v, \phi^v)$ if and only if $\omega^{-1} (\alpha)  \in \Phi^+_n$ for all $\alpha \in \Phi^+_v.$ 
\item If $\omega$ is not realized by an automorphism on $(S^v, Y^v, \phi^v)$, there is $s \in \Sigma_v$ such that $s \circ \omega$ is realized by a rational surface automorphism on $(S^v, Y^v, \phi^v)$ and we have $\lambda(s\circ \omega)=\lambda(\omega)$.
\end{enumerate}
\end{thmb}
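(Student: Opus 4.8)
The plan is to prove part (1) first, as a realizability criterion for the fixed marked blowup $(S^v,Y^v,\phi^v)$, and then derive part (2) from it together with the elementary linear algebra of the leading eigenvector. Throughout I use the fact, which comes out of the construction of $\mathrm{B}\ell(X,\rho^v)$, that a positive root is nodal for $(S^v,Y^v,\phi^v)$ exactly when it is orthogonal to the leading eigenvector, i.e. $\Phi^+_v=\{\alpha\in\Phi^+_n:\alpha\cdot v=0\}$: a root becomes nodal precisely when the corresponding points on the cuspidal cubic $X$ collide or become collinear, and the marking $\rho^v$ recording the position of a class on $X$ is the linear form $x\mapsto x\cdot v$. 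For the forward implication of (1), suppose $F$ realizes $\omega$ on $(S^v,Y^v,\phi^v)$. Then $F$ is an automorphism preserving the anti-canonical curve $Y^v$, so it carries irreducible $(-2)$-curves to irreducible $(-2)$-curves. Since $(F^{-1})_*$ is identified with $\omega^{-1}$ through $\phi^v$, the curve $F^{-1}(N_\alpha)$ associated to a nodal curve $N_\alpha$ of class $\phi^v(\alpha)$ has class $\phi^v(\omega^{-1}\alpha)$ and is again an irreducible $(-2)$-curve; hence $\omega^{-1}(\alpha)$ is again a nodal root, and in particular $\omega^{-1}(\alpha)\in\Phi^+_n$.

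For the converse I would run the construction of Section \ref{S:realization} (a McMullen-type construction). The leading eigenvector condition $\omega v=\lambda v$ forces the restriction of the candidate map to $Y^v$ to be the automorphism $t\mapsto\lambda t$ of the smooth locus of the cuspidal cubic, so comparing the two markings $\phi^v$ and $\phi^v\circ\omega$ produces a birational map of $\mathbf{P}^2$ together with a candidate lift $F\colon S^v\dashrightarrow S^v$. Because the base points are placed exactly as generically as the eigenvector permits, the only way $F$ can fail to be biregular is by contracting a nodal curve, and this happens precisely when some nodal class is sent to a non-effective class. The hypothesis $\omega^{-1}(\Phi^+_v)\subset\Phi^+_n$ says that no nodal root is sent to a negative (hence non-effective) root, so $F$ contracts nothing and is an automorphism. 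I expect this converse — specifically the verification that the positivity condition is the sole obstruction to biregularity — to be the main technical obstacle of the whole theorem.

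For part (2), assume $\omega$ is not realized, so by (1) there is a nodal root with $\omega^{-1}(\alpha)\in\Phi^-_n$. The key structural point is that every nodal root is orthogonal to $v$, whence each generating reflection $s_\alpha$ of $\Sigma_v$ fixes $v$ and therefore every $s\in\Sigma_v$ fixes $v$. Consequently $(s\omega)v=s(\lambda v)=\lambda v$, so $v$ remains an eigenvector of $s\omega$ for $\lambda$. Since $\omega$ is an isometry one has $\langle v,v\rangle=\lambda^2\langle v,v\rangle$, so $v$ is a null vector on the light cone; and because a Minkowski isometry of $\mathbb{Z}^{1,n}$ has at most one reciprocal pair of eigenvalues off the unit circle, the eigenvalue $\lambda>1$ must be the spectral radius of $s\omega$. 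Thus $\lambda(s\omega)=\lambda(\omega)$ for every $s\in\Sigma_v$. Moreover $s\omega$ shares the leading eigenvector $v$ with $\omega$, hence is built on the same marked blowup $(S^v,Y^v,\phi^v)$ with the same nodal roots $\Phi^+_v$, so the criterion of (1) applies verbatim to $s\omega$.

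It remains to produce a single $s\in\Sigma_v$ with $(s\omega)^{-1}(\Phi^+_v)\subset\Phi^+_n$. Writing $(s\omega)^{-1}=\omega^{-1}s^{-1}$, these inverses range exactly over the right coset $\omega^{-1}\Sigma_v$ as $s$ ranges over $\Sigma_v$. Let $d$ be an element of minimal length in this coset; such an element exists because lengths are non-negative integers. If some $\beta\in\Phi^+_v$ satisfied $d(\beta)\in\Phi^-_n$, then by the standard reflection-length formula $\ell(d\,s_\beta)<\ell(d)$ (see \cite{Humphreys:1990}); but $s_\beta\in\Sigma_v$ by definition of $\Sigma_v$, so $d\,s_\beta$ lies in the same coset and is strictly shorter, contradicting minimality. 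Hence $d(\Phi^+_v)\subset\Phi^+_n$, and choosing $s\in\Sigma_v$ with $(s\omega)^{-1}=d$ gives, by part (1), an element $s\omega$ realized by an automorphism on $(S^v,Y^v,\phi^v)$ with $\lambda(s\omega)=\lambda(\omega)$, which completes the proof.
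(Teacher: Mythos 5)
Your overall architecture matches the paper's, but the step you flag as ``the main technical obstacle'' in the converse of part (1) is precisely where the paper leans on two citable results rather than a new argument: Harbourne's criterion (Theorem \ref{T:harbourne}, \cite{Harbourne:1985}), which says $\omega\in W(S,\phi)$ if and only if $\omega^{-1}\Phi(\phi)^+\subset\Phi_n^+$, combined with McMullen's decomposition $\mathrm{Aut}(S,Y,\phi)=\mathrm{Aut}(X,\rho)\cap W(S,\phi)$ (Theorem \ref{T:autgroup}) and the identification $\mathrm{Aut}(X,\rho^v)=W_n^v$ (Proposition \ref{P:autcubic}). Since $v$ is a leading eigenvector, $\omega\in W_n^v$ is automatic, so realizability reduces exactly to the positivity condition; there is no need to re-verify by hand that contraction of nodal curves is the sole obstruction to biregularity. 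Your forward direction (effectivity of classes is preserved by an automorphism, and negative roots are not effective) is the paper's argument. One minor caution: you assert $\Phi^+_v=\{\alpha\in\Phi^+_n:\alpha\cdot v=0\}$; the paper only proves and only needs the inclusion ``nodal $\Rightarrow$ orthogonal to $v$'' (Lemma \ref{L:nodalv}), and that inclusion is all your argument uses, so nothing breaks, but the stated equality is stronger than what is established.

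For part (2) you take a genuinely different and cleaner route. The paper constructs $s$ by an explicit, case-by-case procedure (Lemmas \ref{L:nodal0}--\ref{L:nodaln}) organized by the $A_n$/$D_n$/$E_n$ type of the linked configuration of nodal roots, flipping the sign of one nodal root at a time and terminating because there are finitely many nodal roots (Corollary \ref{C:finite}). You instead pick a minimal-length element $d$ of the coset $\omega^{-1}\Sigma_v$ and invoke the standard fact that $\ell(ws_\beta)<\ell(w)$ if and only if $w(\beta)\in\Phi^-_n$ for a positive root $\beta$ (\cite{Humphreys:1990}); since $s_\beta\in\Sigma_v$ for every nodal root $\beta$, minimality forces $d(\Phi^+_v)\subset\Phi^+_n$, and unwinding $d=(s\circ\omega)^{-1}$ gives $s\in\Sigma_v$. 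This is correct and arguably preferable: it is uniform, avoids the Dynkin-diagram casework entirely, and the observation that every $s\in\Sigma_v$ fixes $v$ (hence $\lambda(s\circ\omega)=\lambda(\omega)$ by the signature $(1,n)$ constraint) is the same as the paper's. What the paper's longer route buys is an effective, step-by-step algorithm for producing $s$ (used in Section \ref{S:trouble} to treat the explicit example), whereas your coset argument is existential unless one supplements it with a length-reduction procedure.
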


Also, via an example (see Theorems \ref{T:nono} and \ref{T:yes}), we show that there is an essential element $\omega \in W_{n\ge 10}$ such that $\omega$ can not be realized by any automorphisms on anti-canonical rational surfaces. Then, using Theorem B, we find a realizable $\omega'$ with the same spectral radius. The method in Theorem B can be done in finitely many steps due to Corollary \ref{C:finite}.

\begin{thmc}
There is an essential element $\omega \in  \cup_{n\ge 10} W_n$ such that $\omega$ can not be realized by an automorphism on an anti-canonical rational surface. 
\end{thmc}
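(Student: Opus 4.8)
The plan is to establish Theorem \zerononmax{} by producing a single explicit witness and letting Theorem \nonzeromax{} do the detection. Since part (1) of Theorem \nonzeromax{} characterizes realizability on the canonical marked blowup $(S^v, Y^v, \phi^v)$ by the positivity condition $\omega^{-1}(\alpha) \in \Phi^+_n$ for every nodal root $\alpha \in \Phi^+_v$, it is enough to exhibit one essential $\omega \in W_n$ with $n \ge 10$ together with one nodal root $\alpha \in \Phi^+_v$ whose image $\omega^{-1}(\alpha)$ is a negative root. A single violating root obstructs realizability on $(S^v, Y^v, \phi^v)$, and the remaining work is to promote this to non-realizability on every anti-canonical rational surface.

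Concretely, I would first fix a candidate $\omega$ as a short word in the simple reflections $s_0, \dots, s_{n-1}$ whose spectral radius $\lambda$ exceeds $1$, compute its leading eigenvector $v \in \mathbb{C}^{1,n}$ from the characteristic polynomial of $\omega$, and then read off the nodal roots $\Phi^+_v$ of the marked blowup built on $v$. The set $\Phi^+_v$ consists of the positive roots $\alpha$ along which the base points placed on the cuspidal cubic collide, a condition governed by the coordinates of $v$ through the group law on the smooth locus of the cubic; by Corollary \ref{C:finite} this set is finite, so listing it and applying $\omega^{-1}$ to each element is a terminating computation. The search then amounts to choosing $\omega$ so that at least one $\alpha \in \Phi^+_v$ satisfies $\omega^{-1}(\alpha) \in \Phi^-_n$; this is the content of Theorem \ref{T:nono}, while Theorem \ref{T:yes} will exhibit the corrected $\omega' = s \circ \omega$ supplied by Theorem \nonzeromax{}(2).

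Two points must then be settled to complete Theorem \zerononmax{}. First, essentiality: I must confirm that the chosen $\omega$ is not conjugate, after relabelling the $e_i$, into any $W_{n'}$ with $n' < n$; this is verified from the characteristic polynomial and the fact that $\omega$ does not stabilize a standard sub-root-lattice of the requisite corank. Second, and this is the crux, the failure recorded on the specific cuspidal-cubic blowup $(S^v, Y^v, \phi^v)$ must be upgraded to a statement about all anti-canonical rational surfaces. Here I would appeal to the uniqueness of the leading eigenvector $v$ up to scale: for an automorphism realizing $\omega$ the base points lie on the arithmetic-genus-one anti-canonical curve and their positions under its group law are forced by $v$, so any such realization reproduces the same nodal data $\Phi^+_v$ and falls under the hypotheses of Theorem \nonzeromax{}(1).

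I expect the last reduction to be the principal obstacle, since one must argue that the nodal-root data is the same regardless of whether the anti-canonical curve is a smooth elliptic curve, a nodal cubic, or a cuspidal cubic: the coincidences defining $\Phi^+_v$ are the vanishing of integer-linear combinations of the base-point parameters in the group law, a condition insensitive to which genus-one curve carries that law, so the obstruction persists uniformly. The secondary difficulty is purely computational, namely identifying a specific $\omega$ whose eigenvector $v$ actually forces a nodal root that $\omega^{-1}$ pushes into $\Phi^-_n$; this is a finite search made feasible by Corollary \ref{C:finite}.
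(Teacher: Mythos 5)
Your proposal correctly identifies the crux --- upgrading non-realizability on the one marked blowup $(S^v, Y^v, \phi^v)$ to non-realizability on \emph{every} anti-canonical rational surface --- but the way you propose to close that gap does not work, and it is not how the paper does it. You argue that the nodal-root data $\Phi^+_v$ is ``insensitive to which genus-one curve carries the group law,'' so the same positivity obstruction persists for a smooth elliptic curve, a nodal cubic, etc. This is unjustified on two counts. First, the smooth and nodal cases are not eliminated by nodal roots at all: by Diller's result (Theorem \ref{T:invariant} in the paper), an automorphism with $\delta(F)>1$ stabilizing the anti-canonical curve forces $X=\pi(Y)$ to be a cuspidal cubic, a conic with its tangent, or three concurrent lines, because otherwise the determinant of $f|_X$ has modulus $1$ while Theorem \ref{T:determinant} forces it to equal $\delta(F)>1$. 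Smooth elliptic curves and nodal cubics are simply impossible carriers, for reasons having nothing to do with $\Phi^+_v$. Second, for the two remaining reducible configurations the anti-canonical curve is not a cuspidal cubic, so the construction $\mathrm{B}\ell(X,\rho^v)$ and Theorem B do not apply, and there is no reason the nodal data should coincide; the paper instead disposes of these cases by purely combinatorial constraints on the orbit data (Theorems \ref{T:3lines} and \ref{T:conicT}): the witness $\omega=\kappa_{1,7,8}\circ s\in W_{13}$ has orbit data $6,1,6$ with a cyclic permutation, which violates the mod-$3$ congruence required for three concurrent lines and the all-odd condition required for a conic with tangent.

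What survives of your plan is the cuspidal-cubic branch: there the paper does exactly what you describe (compute $v$, list the six nodal roots, observe $\omega(e_0-e_1-e_7-e_{13})=-(e_2-e_8)$, and conclude via Theorem \ref{T:harbourne}), together with Diller's uniqueness theorem to see that any cuspidal-cubic realization must be the canonical one built from $v$. To repair your argument you would need to (i) invoke Theorem \ref{T:invariant} to reduce to the three singular types, and (ii) supply a separate argument --- the orbit-data parity and congruence obstructions --- for the two reducible types, since the nodal-root criterion of Theorem B is only available in the cuspidal-cubic setting.
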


This article is organized as follows: Section \ref{S:wgroup} discusses Coxeter groups and their essential elements. In Sections \ref{S:gMB} --  \ref{S:auto}, we generalize McMullen's definitions \cite{McMullen:2007} of Marked blowups and Marked cubics by allowing infinitely near points as base points. Also, we show that the set of nodal roots is finite, provided a unique anti-canonical curve exists. In Section \ref{S:realization}, we prove Theorem A and Theorem B. A couple of basic properties of quadratic birational maps and their orbit data are discussed in Section \ref{S:quad}. In this section, we also briefly explain Diller's construction in \cite{Diller:2011} and a few more cases omitted in \cite{Diller:2011}. In Section \ref{S:trouble}, using a specific example, we prove Theorem C and show how to use Theorem B. 

\subsection*{Acknowledgement}
We would like to thank Jeffery Diller for his comments and attention and the referee for suggesting a better title. This material is partly based upon work supported by the National Science Foundation under DMS-1928930 while the author participated in a program hosted by the Mathematical Sciences Research Institute in Berkeley, California during Spring 2022 semester. 

\section{Coxeter Group}\label{S:wgroup}
Let $\zm$ be a lattice of signature $(1,n)$ with the basis $( \ie_0,\dots, \ie_n)$ such that \[ \ie_0\cdot \ie_0 =1, \quad \ie_i\cdot \ie_i = -1, i=1,\dots, n,\quad \text{and} \quad \ie_i\cdot \ie_j = 0, i \ne j.\] 
 Using this basis as coordinates, the Minkowski inner product is given by \[ x\cdot x \ = \ x_0^2 - x_1^2 - x_2^2- \cdots -x_n^2\qquad \text{for} \ \ x = \sum x_i \ie_i.\]
  The canonical vector is given by $\kappa_n = (-3,1,1,\dots,1)\in \zm$. The subgroup $L_n  = \kappa_n^\perp \subset \zm$ orthogonal to $\kappa_n$ is generated by a set of vectors \[ \alpha_0 = \ie_0 - \ie_1-\ie_2-\ie_3, \quad \text{and} \quad \alpha_i = \ie_i - \ie_{i+1}, \ i=1, \dots, n-1.\]
 The generating vectors $\alpha_i, i=0,\dots, n-1$ satisfy
 \[  \kappa_n \cdot \alpha_i =0  \quad \text{and} \quad \alpha_i \cdot \alpha_i = -2,\  \text{for }0 \le i \le n-1. \]
For each $i=0, \dots, n-1$, we can define a reflection $s_i$ on $\zm$ \[ s_i (x) = x + (x\cdot \alpha_i ) \, \alpha_i .\] We see that $s_i(\kappa_n) = \kappa_n$, $s_i(\alpha_i) = - \alpha_i$, and $s_i(\alpha_j) = \alpha_j$ for $j \ne i$. The group $W_n$ generated by those reflections $s_0, \dots, s_{n-1}$ fixes $\kappa_n$ and preserves the Minkowski product, and we have 
\[ W_n = \langle s_0, s_1, \dots, s_{n-1} | (s_i s_j)^{m_{ij}} = 1 \rangle\] where 
\[
m_{ij} = \left\{ \begin{aligned} \ 1 \ \ \ \ \ & \text{if } i=j \\ \ 3\ \ \ \ \ & \text{if } \{i,j\} = \{0,3\} \text{ or } |i-j| = 1, i,j \ne 0 \\ \ 2\ \ \ \ \ &\text{otherwise} \\\end{aligned} \right.
\]
This group $W_n \subset O(L_n)$ is called the Coxeter group. Let $V_n$ be a inner product space over $\mathbb{R}$ with the basis $\{\alpha_0, \alpha_1, \dots, \alpha_{n-1}\}$. Then each $s_i \in W_n$ gives the unique reflection in $O(V_n)$ preserving $L_n$. The root system $\Phi_n = \cup_i W_n \alpha_i$ is the $W_n$-orbits in $V_n$ of generating vectors $\alpha_i$'s.

\subsection{Root System and Simple roots} Each vector in $\alpha \in \Phi_n$ is called a root, and each generating vector $\alpha_i$ is called a simple root. Each $\alpha \in \Phi_n$ is a linear combination $\sum_i c_i \alpha_i$ with either $c_i \ge 0$ for all $i$ or $c_i \le 0$ for all $i$. We say $\alpha \in \Phi_n$ is positive if $\alpha = \sum _i c_i \alpha_i$ with $c_i \ge 0$ for all $i=0, \dots, n-1$ and $\alpha$ is negative if $c_i \le 0$ for all $i$. 
Let $\Phi_n^+$ denote the set of all positive roots and $\Phi_n^-$ for the set of negative roots. 

\vspace{1ex}
Let us list a couple of useful facts about root systems (See \cite{Humphreys:1990} for details.)
\begin{itemize}
\item The group homomorphism $W_n \to GL(\zm)$ is injective.
\item $\Phi_n = \Phi_n^+ \cup  \Phi_n^-$.
\item  For all $\alpha_i$, $s_i ( \Phi_n^+ \setminus\{ \alpha_i\}) = \Phi_n^+ \setminus \{ \alpha_i\}$ 
\end{itemize}

\subsection{Length Function} For each $\omega\in W_n$, $\omega$ is a word in $s_i, i=0, \dots, n-1$ and the length $\ell(\omega)$ is given by the minimum possible length of a word in $s_i$ for $\omega$. Note that $\omega \in W_n$ can have several different words of length $\ell(\omega)$. One useful fact of the length of $\omega$ is its relation to the number of positive roots sent to negative roots \cite{Humphreys:1990}, that is, for $\omega \in W_n$, $\ell(\omega) = | \Pi(\omega)|$ where $\Pi(\omega)$ is the set of positive roots sent by $\omega$ to negative roots. 
\[ \ell(\omega) \ =\ |\{ \alpha \in \Phi^+_n: \omega(\alpha)  \in \Phi^-_n\}| \]


\subsection{Essential Elements} Let $S = \{ s_0, s_1, \dots, s_{n-1} \}$ be a set of generators of $W_n$. 
\begin{defn}
We say $\omega \in W_n$ is \textit{essential} if there is no subgroup $W' \subset W_n$ generated by a proper subset $S' \subset S$ such that \[\omega W' \omega^{-1} = \omega^{-1} W' \omega = W'.\] Let us denote $W_n^{ess}$ the set of essential elements of $W_n$.
\end{defn}
 Thus if $\omega \in W_n$ is non-essential, then $\omega$ can be identified as an element of a smaller Coxeter group generated by $S'$. If $S' \subset \{ s_1, s_2,\dots, s_{n-1} \}$, then $W '$ is finite and its Coxeter graph is given by $A_m$ for some $m\le n$ \cite[Chapter~2]{Humphreys:1990}. If $s_0 \in S'$, we may assume that $S' \subset \{ s_0, s_1, \dots, s_{n-2} \}$. Then we can treat $\omega$ as an element of $W_{n-1}$. If $\omega\in W_n$ is generated by $s_1, \dots, s_n$, then the spectral radius $\lambda(\omega)$ of $\omega=1$. Thus, from the definition, we see that 
 \begin{lem}\label{L:essentialdegree}
 If $\omega \in W_n$ with the spectral radius $\lambda(\omega)>1$, then there is $n'\le n$ such that $\omega$ is conjugate to an essential element $\omega' \in W_{n'}^{ess}$ and $\lambda(\omega) = \lambda(\omega')$.
 \end{lem}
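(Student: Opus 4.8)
\emph{The plan.} I would argue by induction on $n$, the organizing principle being that the spectral radius is a conjugacy invariant: conjugate elements of $W_n$ act on $\zm$ by conjugate integer matrices, hence share a characteristic polynomial and so a spectral radius. If $\omega$ is already essential, the conclusion holds with $n'=n$ and $\omega'=\omega$, so assume $\omega$ is non-essential. By the definition of essentiality together with the reduction described in the paragraph preceding the lemma, a non-essential $\omega$ may be identified, after relabeling the $\ie_i$, with an element of a proper standard parabolic $W'=\langle S'\rangle$, $S'\subsetneq S$. Since every such identification is realized by a conjugation and $\lambda$ is preserved, I may assume outright that $\omega\in W'$.

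The next step is the dichotomy recorded in the text. If $s_0\notin S'$, then $S'\subseteq\{s_1,\dots,s_{n-1}\}$ and the corresponding subdiagram is a subgraph of the type $A_{n-1}$ path, so $W'$ is a finite Coxeter group; every element of a finite group has finite order and therefore spectral radius $1$, contradicting $\lambda(\omega)>1$. Hence $s_0\in S'$. As $S'$ is proper it omits some $s_j$ with $j\ge 1$; after relabeling the coordinates $\ie_1,\dots,\ie_n$ — a conjugation by a permutation lying in $W_n$, hence preserving $\lambda$ — I would arrange the omitted generator to be $s_{n-1}$, placing $\omega$ inside the standard subgroup $W_{n-1}=\langle s_0,\dots,s_{n-2}\rangle$ with $\lambda(\omega)>1$ intact. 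The inductive hypothesis applied in $W_{n-1}$ then yields an essential $\omega'\in W_{n'}^{ess}$ with $n'\le n-1\le n$ and $\lambda(\omega')=\lambda(\omega)$; composing the conjugations completes the step, and the descent terminates because $n$ strictly decreases while remaining positive (indeed, bounded below by the smallest index supporting spectral radius $>1$).

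The step I expect to be the main obstacle is the relabeling, namely justifying that one may conjugate into the \emph{standard} $W_{n-1}$ rather than merely into $\langle S\setminus\{s_j\}\rangle$ for an interior node $s_j$. The difficulty is that conjugation by a coordinate permutation need not carry simple reflections to simple reflections — for instance $\alpha_0=\ie_0-\ie_1-\ie_2-\ie_3$ pins down the set $\{1,2,3\}$ — so the reduction must be organized around the combinatorics of the $E_n$ diagram. The structural fact I would exploit is that $E_n$ is a tree with a single branch node $s_3$: consequently any parabolic subgroup has at most one infinite irreducible component, and that component is forced to be a standard $E_m=\langle s_0,\dots,s_{m-1}\rangle$, while all remaining components are of finite type $A$ and contribute only roots of unity to the spectrum. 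Using this, together with the device of conjugating across a gap node to spread or absorb the finite-order factors, I would show that after conjugation the essential part lands in a standard $W_{n'}$. Tracking these finite-order factors carefully is the genuine technical heart of the argument, the part the surrounding discussion treats as immediate.
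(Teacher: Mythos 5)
Your argument is essentially the paper's own: the lemma is justified there by exactly the dichotomy you use --- a non-essential $\omega$ is identified with an element of a proper standard parabolic $W'=\langle S'\rangle$; if $s_0\notin S'$ then $W'$ is of finite type $A$ and $\lambda(\omega)=1$, otherwise one may take $S'\subset\{s_0,\dots,s_{n-2}\}$ and descend to $W_{n-1}$ --- iterated until an essential element is reached, with $\lambda$ preserved under conjugation throughout. Your added care about the relabeling step (using that the $E_n$ diagram is a tree with a single branch node, so the only component of a proper parabolic that can carry spectral radius $>1$ is a standard $E_m$) fills in precisely the point the paper treats as immediate, and does so correctly.
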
 The essential elements of $W_n, n\ge10$ and their properties was considered by Krammer \cite{Krammer:2009} and Paris \cite{Paris:2007}.

 \begin{prop}\cite{Krammer:2009}\label{P:essential}
 Suppose a Coxeter group $W=W_n, n\ge 10$. Then
 \begin{itemize}
 \item every essential element is of infinite order,
 \item $\omega$ is essential if and only if $\omega^m, m\ne 0$ is essential, and
 \item if $\omega$ is essential then $\langle \omega \rangle = \{ \omega^m, m \in \mathbb{Z} \} $ is a finite index subgroup of the centralizer of $\omega$ in $W$. 
 \end{itemize}
 \end{prop}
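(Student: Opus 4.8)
The plan is to trade the combinatorial definition for a geometric one and read all three assertions off the action of $W_n$ on hyperbolic space. For $n\ge 10$ the canonical vector satisfies $\kappa_n\cdot\kappa_n=9-n<0$, so the form restricted to $L_n=\kappa_n^\perp$ has signature $(1,n-1)$; hence $W_n$ is an \emph{indefinite (Lorentzian), non-affine} Coxeter group and acts as a discrete group of isometries on the hyperbolic space $\mathbb{H}^{n-1}\subset\mathbb{P}(L_n\otimes\mathbb{R})$. First I would record the dictionary: every $\omega\in W_n$ is elliptic (a fixed point in $\mathbb{H}^{n-1}$), parabolic (a single fixed point on $\partial\mathbb{H}^{n-1}$), or loxodromic (two boundary fixed points and an invariant geodesic, its axis), with $\lambda(\omega)>1$ precisely in the loxodromic case. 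I would also invoke the parabolic closure $Pc(\omega)$, the smallest parabolic subgroup containing $\omega$, which is conjugation-equivariant, and note that $\omega$ is essential exactly when $Pc(\omega)=W_n$: being contained in (hence normalizing) no proper parabolic is the geometric counterpart of the stated condition that $\omega$ stabilize no proper standard parabolic.

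For the first bullet I would argue contrapositively. A finite-order element has a fixed point in $\mathbb{H}^{n-1}$ (the circumcenter of a finite orbit in this complete CAT(0) space), and point stabilizers in the discrete group $W_n$ are finite reflection groups, hence conjugate into a spherical standard parabolic; a parabolic isometry likewise has affine parabolic closure. In either case $Pc(\omega)$ is a proper (finite, resp. affine) subgroup of the non-affine group $W_n$, so $\omega$ is not essential. Consequently an essential element is loxodromic, in particular of infinite order. This simultaneously yields the first bullet and the refinement I will reuse: $\omega$ essential $\iff$ $\omega$ loxodromic with $Pc(\omega)=W_n$, and then $\lambda(\omega)>1$.

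For the second bullet the engine is the identity $Pc(\omega)=Pc(\omega^m)$ for every $m\ne 0$. The inclusion $Pc(\omega^m)\subseteq Pc(\omega)$ is immediate from $\omega^m\in Pc(\omega)$; the reverse is the substantive direction, which I would obtain from the fact that $\omega$ and $\omega^m$ have the same fixed-point set on $\overline{\mathbb{H}^{n-1}}$ (the same axis in the loxodromic case), together with the Coxeter-group theorem that the parabolic closure is determined by this limit data. Granting this, $\omega$ essential $\iff Pc(\omega)=W_n \iff Pc(\omega^m)=W_n \iff \omega^m$ essential.

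The heart of the matter, and where I expect the real work, is the third bullet. Let $\omega$ be essential, hence loxodromic with axis $A\cong\mathbb{R}$ and endpoints $\{\omega^+,\omega^-\}\subset\partial\mathbb{H}^{n-1}$. Any $g\in C_{W_n}(\omega)$ preserves $\mathrm{Fix}(\omega)=\{\omega^+,\omega^-\}$, hence stabilizes $A$ setwise, giving a restriction homomorphism $\rho:C_{W_n}(\omega)\to\mathrm{Isom}(A)=\mathbb{R}\rtimes\mathbb{Z}/2$. Because the action of $W_n$ is properly discontinuous, the kernel of $\rho$, the pointwise stabilizer of the geodesic $A$, is a discrete subgroup of the compact rotation group about $A$, hence finite; and the image of $\rho$ is a discrete subgroup of $\mathrm{Isom}(\mathbb{R})$ containing the nontrivial translation $\rho(\omega)$ of length $\log\lambda(\omega)>0$, hence infinite virtually cyclic with $\rho(\langle\omega\rangle)$ of finite index. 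Combining the finite kernel with the finite-index cyclic image shows $\langle\omega\rangle$ has finite index in $C_{W_n}(\omega)$. The main obstacles are therefore the two structural inputs I have leaned on, namely the conjugation-equivariant parabolic closure together with $Pc(\omega)=Pc(\omega^m)$, and the properness of the Lorentzian action that pins down both the kernel and the image of $\rho$; once these are in place the three statements follow uniformly.
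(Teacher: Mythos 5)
The paper does not actually prove this proposition: it is quoted verbatim from Krammer \cite{Krammer:2009}, so there is no in-paper argument to compare against. Your sketch is essentially a reconstruction of the route taken by Krammer (and Paris): since $\kappa_n\cdot\kappa_n=9-n<0$ for $n\ge 10$, the group $W_n$ acts as a discrete subgroup of $O(1,n-1)$ on $\mathbb{H}^{n-1}$, essentiality is translated into the statement $Pc(\omega)=W_n$ about parabolic closures, and the three bullets are read off the elliptic/parabolic/loxodromic trichotomy. The part you argue in full --- the third bullet --- is correct: the centralizer stabilizes the axis, the pointwise stabilizer of the axis is finite and the image in $\mathrm{Isom}(\mathbb{R})$ is discrete by proper discontinuity of the action, and the finite-kernel, finite-index-image bookkeeping goes through.

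Two caveats. First, the substantive content of the first two bullets sits entirely inside the two inputs you explicitly black-box: that elliptic (resp.\ parabolic) isometries have finite (resp.\ affine) parabolic closure, and that $Pc(\omega^m)=Pc(\omega)$ for $m\ne 0$. These are the main theorems of \cite{Krammer:2009}, not routine facts, so as a self-contained proof the proposal defers the hard work back to the very reference being cited. (For the first bullet alone there is a shortcut already available in the paper: Theorem~\ref{T:minimumsp} gives $\lambda(\omega)>1$ for essential $\omega$, hence infinite order.) Second, your dictionary ``$\omega$ essential $\iff Pc(\omega)=W_n$'' quietly replaces the paper's stated definition (no proper standard parabolic is \emph{normalized} by $\omega$) with Krammer's (no proper parabolic \emph{contains} $\omega$). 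Containment implies normalization, so the paper's notion implies Krammer's; that direction suffices for bullets one and three, but for the forward implication of bullet two (if $\omega^m$ normalizes a proper standard parabolic then so does $\omega$) the identity $Pc(\omega^m)=Pc(\omega)$ alone does not deliver the statement under the normalization definition. The paper itself is loose here --- the introduction uses the containment version --- but if the proposition is to be proved as stated, the equivalence of the two notions of essential needs to be addressed explicitly rather than folded into a parenthetical.
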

 
 It is known \cite{McMullen:2002, Paris:2007} that the Coxeter element $\omega = s_0 s_1 \cdots s_{n-1} \in W_n$ is essential, and the product of $s_0, \dots, s_{n-1}$ in any other is conjugate to $\omega$. In fact, the Coxeter element has the smallest spectral radius among all essential elements in $W_n$. 
 \begin{thm}\cite[Theorem~1.2]{McMullen:2002} \label{T:minimumsp}
 Let $\omega \in W_n^{ess}$. Then the spectral radius $\lambda(\omega) \ge \lambda( s_0 s_1 \cdots s_{n-1} )$. 
 \end{thm}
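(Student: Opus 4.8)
The plan is to recast the inequality as a comparison of translation lengths of isometries of hyperbolic space, in which the Coxeter element emerges as the minimal-displacement essential isometry. First I would set up the geometric model. Since $n \ge 10$, the restriction of the Minkowski form to $V_n = L_n \otimes \mathbb{R}$ has Lorentzian signature, so the projectivized positive cone is a copy of hyperbolic space $\mathbb{H}$ on which $W_n \subset O(V_n)$ acts by isometries. Each simple reflection $s_i$ acts as the reflection in the wall $\alpha_i^\perp$, and the walls $\{\alpha^\perp : \alpha \in \Phi_n^+\}$ tile $\mathbb{H}$ by copies of the fundamental Weyl chamber $D$.

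Next I would show that an essential $\omega$ acts loxodromically with $\log \lambda(\omega)$ equal to its translation length. By Proposition \ref{P:essential}, an essential element has infinite order and virtually cyclic centralizer. Infinite order rules out $\omega$ being elliptic; and a parabolic isometry would fix a single boundary point whose stabilizer, in this higher-rank Lorentzian setting, carries a centralizer strictly larger than virtually cyclic, so the centralizer condition rules out the parabolic case as well. Hence $\omega$ is loxodromic: it has a real eigenvalue $\lambda > 1$ with eigenvector on the light cone, an invariant geodesic axis $A_\omega$, and translation length $\tau(\omega) = \inf_{x \in \mathbb{H}} d(x, \omega x) = \log \lambda(\omega)$, the infimum being attained along $A_\omega$. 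In particular every essential element satisfies $\lambda(\omega) > 1$.

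The crux is then to bound $\tau(\omega)$ below by the translation length of the Coxeter element. Because $\omega$ is essential, its axis lies in no wall and is setwise fixed by no proper standard parabolic subgroup, so over one period the axis must cross a transverse system of walls sweeping out the full generating set. I would make this quantitative by tracking the displacement of an interior point $x_0 \in D$: the cost of pushing $x_0$ across the walls bounding $D$ is controlled by the inner products $\alpha_i \cdot \alpha_j$ recorded in the Coxeter diagram $E_n$, and among all closed itineraries visiting every wall the cheapest is realized precisely when $\omega$ cycles once through the generators in a linear order, i.e. when $\omega$ is conjugate to $s_0 s_1 \cdots s_{n-1}$. This yields $\tau(\omega) \ge \tau(s_0 \cdots s_{n-1})$, hence $\lambda(\omega) \ge \lambda(s_0 \cdots s_{n-1})$. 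Finally I would confirm sharpness by identifying $\lambda(s_0 \cdots s_{n-1})$ as the largest root of the Coxeter characteristic polynomial, so equality holds for the Coxeter element.

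The main obstacle is this third step: making the \emph{minimal wall-crossing itinerary} argument precise and uniform over all essential elements. This is exactly where McMullen's Hilbert-metric estimates enter, since one must produce a combinatorial lower bound on displacement that is independent of the particular essential element and is saturated only by Coxeter-type itineraries; the elliptic/parabolic exclusion of the second step, while conceptually routine given Proposition \ref{P:essential}, also requires care to make rigorous in the Lorentzian model.
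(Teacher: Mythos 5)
The paper does not prove this statement at all: it is quoted verbatim from \cite[Theorem~1.2]{McMullen:2002}, so there is no internal argument to compare yours against. Judged on its own, your outline correctly identifies the geometric setting (for $n\ge 10$ the form on $L_n\otimes\mathbb{R}$ is Lorentzian, $W_n$ acts on hyperbolic space, and $\log\lambda(\omega)$ is the translation length of a loxodromic element), but both of its load-bearing steps are asserted rather than proved, and the first is argued incorrectly. Ruling out the parabolic case via Proposition \ref{P:essential} does not work as you state it: the relevant object is the centralizer of $\omega$ inside the discrete group $W_n$, not the stabilizer of the boundary fixed point in the full isometry group, and a parabolic element of a discrete group can perfectly well have virtually cyclic centralizer in that group (parabolics in nonuniform lattices in $\mathrm{PSL}_2(\mathbb{R})$ give examples). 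Showing that essential elements satisfy $\lambda(\omega)>1$ is itself part of the content of the theorem and needs a genuine argument.

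The second and decisive gap is the ``minimal wall-crossing itinerary'' claim: that among all closed itineraries visiting every wall type, the cheapest displacement is achieved by cycling once through the generators in linear order. No justification is offered, and you acknowledge that this is the crux. As stated the heuristic is not even obviously well posed --- hyperbolic displacement is not a function of the cyclic order of wall types crossed --- and McMullen's actual proof does not proceed this way: it works with the Hilbert metric on the Tits cone, shows that $\log\lambda(w)$ equals the Hilbert-metric translation length, bounds the displacement of an essential element from below by a quantity attached to the Coxeter diagram, and identifies that quantity with $\log\lambda$ of the bicolored Coxeter element via A'Campo's formula relating $\lambda(c)+\lambda(c)^{-1}+2$ to the square of the spectral radius of the diagram's adjacency matrix. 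Without those ingredients, or a substitute for them, the inequality $\tau(\omega)\ge\tau(s_0\cdots s_{n-1})$ is not established, so the proposal is a plausible road map but not a proof.
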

 In this article, we will focus on essential elements. 
 \begin{lem}
 If $\omega \in W^{ess}_n$, then for each $i \ne 0$, there is $k \in \mathbb{Z}$ such that $\omega^k( e_i) \cdot e_0 \ne 0 $
 \end{lem}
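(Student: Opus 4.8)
The plan is to argue by contraposition: I will show that if some basis vector $e_i$ with $i \neq 0$ satisfies $\omega^k(e_i) \cdot e_0 = 0$ for every $k \in \mathbb{Z}$, then $\omega$ cannot be essential. The first step is to read off what the hypothesis means geometrically. Since $x \cdot e_0$ is precisely the $e_0$-coordinate of $x$, and since $e_0^\perp = \langle e_1, \dots, e_n\rangle$ carries the negative definite part of the Minkowski form, the assumption says that the \emph{entire} $\omega$-orbit of $e_i$ lies in the negative definite sublattice $e_0^\perp$. Set $U = \mathrm{span}_{\mathbb{Z}}\{\omega^k(e_i) : k \in \mathbb{Z}\}$. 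This is a finitely generated, $\omega$-invariant sublattice of $\mathbb{Z}^{1,n}$ contained in $e_0^\perp$, hence a negative definite lattice on which $\omega$ acts by isometries.

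The key observation is that the isometry group of a definite lattice is finite. Consequently $\omega|_U$ has finite order, say $m \geq 1$, and therefore $\omega^m(e_i) = e_i$. Thus the failure of the conclusion forces some positive power of $\omega$ to fix a coordinate vector $e_i$ with $i \neq 0$.

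It remains to see that an essential element admits no such power. By Proposition~\ref{P:essential}, $\omega^m$ is again essential whenever $\omega$ is, so it suffices to show that an element $\sigma$ fixing some $e_i$ with $i \neq 0$ is not essential. I would treat the case $i = n$ first, where the conclusion is transparent: if $\sigma(e_n) = e_n$, then $\sigma$ preserves $e_n^\perp$ and hence permutes the roots orthogonal to $e_n$, namely $\{\alpha \in \Phi_n : \alpha \cdot e_n = 0\}$, which is exactly the root system of the standard parabolic $W' = \langle s_0, \dots, s_{n-2}\rangle$ (the subsystem obtained by deleting the last node). Since $\sigma s_\alpha \sigma^{-1} = s_{\sigma(\alpha)}$ and $\sigma(\alpha) \cdot e_n = \alpha \cdot e_n = 0$ for every such $\alpha$, we get $\sigma W' \sigma^{-1} \subseteq W'$, and likewise with $\sigma^{-1}$; hence $\sigma$ normalizes the proper standard parabolic $W'$, so $\sigma$ is non-essential. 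For general $i$ I would first conjugate by a permutation $\tau \in \langle s_1, \dots, s_{n-1}\rangle$ with $\tau(e_i) = e_n$, reducing to the previous case for $\tau \sigma \tau^{-1}$. Applying this to $\sigma = \omega^m$ then contradicts the essentiality of $\omega$, completing the contrapositive.

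The step I expect to require the most care is the last one: reconciling ``$\omega^m$ fixes $e_i$'' with the precise definition of essentiality used here, since for $i \neq n$ the stabilizer of $e_i$ is only a \emph{conjugate} of the standard parabolic. I would handle this by checking that the property of normalizing a proper parabolic subgroup is invariant under conjugation (which is immediate) together with the standard fact that, for these groups, normalizing a proper parabolic is equivalent to the stated condition on proper \emph{standard} parabolics; alternatively one may invoke the usual characterization that an element is essential exactly when it lies in no proper parabolic subgroup, which makes the stabilizer argument direct via $\omega^m \in P_i$ with $P_i = \langle s_\alpha : \alpha \cdot e_i = 0\rangle$ proper (proper because $e_i \cdot \alpha_i \neq 0$). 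The remaining ingredients — realizing the orbit as a definite sublattice and using finiteness of its isometry group — are routine.
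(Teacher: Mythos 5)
Your proof is correct, and its overall skeleton coincides with the paper's: argue the contrapositive, show that some power $\omega^m$ fixes $e_i$, deduce that $\omega^m$ normalizes a proper standard parabolic and hence is non-essential, and finish with Proposition~\ref{P:essential}(2). The one step where you genuinely diverge is how you get finite order. The paper computes $\omega^k(e_i)\cdot\omega^k(e_i)=-1$ and concludes that each $\omega^k(e_i)$ is (up to sign, and the sign is forced to be $+$ since $\omega$ fixes $\kappa_n$) a standard basis vector $e_j$; the orbit is therefore an explicit finite set $\{e_j : j\in I\}$, and after relabelling $I=\{k+1,\dots,n\}$ the parabolic $W_k$ being normalized is visible by inspection. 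You instead observe that the orbit spans an $\omega$-invariant sublattice of the negative definite space $e_0^\perp$ and invoke finiteness of the isometry group of a definite lattice. Your route is slightly more general (it never uses $e_i\cdot e_i=-1$, only definiteness) and cleaner, at the cost of not identifying the orbit explicitly, which forces you to work with the stabilizer of the single vector $e_i$ and the maximal parabolic $\langle s_0,\dots,s_{n-2}\rangle$ rather than the paper's $W_k$. The delicate point you flag at the end --- that for $i\neq n$ the relevant subgroup is only a conjugate of a standard parabolic, so one must relabel and use conjugation-invariance of essentiality --- is real, but the paper's own proof makes exactly the same silent reduction (``we may assume $I=\{k+1,\dots,n\}$''), so you are, if anything, more careful than the source on that score.
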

\begin{proof}
Suppose there is $e_i$ such that $\omega^k( e_i) \cdot e_0 = 0 $ for all $k$. Then $\omega^k( e_i) = \sum_{j \ne 0} m_j e_j$ for some $m_j \in \mathbb{Z}$. Since $\omega^k$ preserves the bilinear form, we have \[\omega^k( e_i) \cdot \omega^k( e_i) =-  \sum_{j\ne 0} m_j^2 = -1.\] It follows that $m_j = 0$ for all but one $1 \le j\le n$.
Thus there is a subset $I \subset \{ 1,2, \dots, n\}$ such that  \[ \{ \omega^k( e_i), k \in \mathbb{Z} \} = \{ e_j, j \in I\}. \] We may assume that $I = \{k+1, 
\dots, n\} $ for some $k\le n-1$. Then there must be $m \in \mathbb{Z}$ such that $\omega^m e_j = e_j$ for all $j \in I$. Thus it is clear that $\omega^m W_k \omega^{-m} = W_k$, for some $k\le n-1$ and $\omega^m$ is not essential. Using the second statement of Proposition \ref{P:essential}, we see that $\omega$ is not essential.
\end{proof}

\subsection{Periodic roots}

Suppose $\omega \in W_n$. Let us denote $\lambda_\omega$, the spectral radius of $\omega$. 
\begin{prop}\label{P:orthogonal}
Let $\omega \in O(V_n)$. If $v \in V_n$ is an eigenvector corresponding to an eigenvalue $\lambda \ne \pm1$, then $v \cdot v =0$. 
\end{prop}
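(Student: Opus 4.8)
The plan is to use the single defining property of $\omega \in O(V_n)$---that it preserves the symmetric bilinear form---together with the eigenvalue relation $\omega v = \lambda v$. Since an eigenvalue $\lambda \neq \pm 1$ need not be real (indeed for the leading eigenvalue of an essential element it typically is real and larger than $1$, but in general the relevant eigenvalues come in complex pairs on the unit circle or off it), I would first pass to the complexification. Concretely, extend the symmetric bilinear form on $V_n$ to a symmetric $\mathbb{C}$\textemdash bilinear (not Hermitian) form on $V_n \otimes \mathbb{C}$, and extend $\omega$ $\mathbb{C}$-linearly. The extended map still preserves this bilinear form, so the computation can be carried out over $\mathbb{C}$ with $v$ a genuine (possibly complex) eigenvector.

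The key step is then a one-line computation. Because $\omega$ is orthogonal we have $\omega v \cdot \omega v = v \cdot v$, while the eigenvector relation gives
\[ \omega v \cdot \omega v \ = \ (\lambda v) \cdot (\lambda v) \ = \ \lambda^2 \, (v \cdot v). \]
Equating the two expressions yields $(\lambda^2 - 1)(v \cdot v) = 0$. The hypothesis $\lambda \neq \pm 1$ is exactly the statement that $\lambda^2 \neq 1$, so the scalar factor is nonzero and we are forced to conclude $v \cdot v = 0$.

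I do not anticipate a genuine obstacle: the content of the proposition is entirely formal and follows from the invariance of the form under $\omega$. The only point requiring a word of care is the remark above that one must work with the $\mathbb{C}$-bilinear extension rather than a Hermitian inner product, since it is $\lambda^2$ (and not $\lvert \lambda \rvert^2$) that appears, and it is precisely this that makes the condition $\lambda \neq \pm 1$ the correct nondegeneracy hypothesis. It is worth recording, as a byproduct of the same argument applied to two eigenvectors $v, w$ with eigenvalues $\lambda, \mu$, that $(1 - \lambda\mu)(v \cdot w) = 0$, so eigenspaces for eigenvalues $\lambda, \mu$ with $\lambda\mu \neq 1$ are mutually orthogonal; this is the natural framework in which the stated isotropy result sits and will be convenient later.
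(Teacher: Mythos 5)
Your proof is correct and is essentially the same one-line computation the paper gives: $v\cdot v = \omega v\cdot\omega v = \lambda^2(v\cdot v)$ forces $v\cdot v=0$ when $\lambda^2\neq 1$. Your added remark about working with the $\mathbb{C}$-bilinear (not Hermitian) extension is a point of care the paper leaves implicit, but it does not change the argument.
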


\begin{proof}
Since $\omega v = \lambda v$, we have \[ v\cdot v = \omega v \cdot \omega v = \lambda^2 (v\cdot v)\]
Since $\lambda^2 \ne 1$, we have $v \cdot v =0$.
\end{proof}

%
%
%
Suppose $\lambda_\omega>1$ and let $S$ be a subspace of $V_n$ generated by eigenvectors whose eigenvalues are Galois conjugate of $\lambda_\omega$. Then, any periodic roots are in the orthogonal complement $S^\perp$ .

\begin{prop}\label{P:periodic}
Suppose $\omega \in W_n$ and there exists $v \in V_n$ such that $ \omega v = \lambda_\omega v $ with $\lambda_\omega>1$. Then for all periodic roots $\alpha \in \Phi_n$, we have \[ \alpha\cdot v = 0. \]
Conversely, if $ \alpha\cdot v = 0$ then $\alpha$ is periodic.
\end{prop}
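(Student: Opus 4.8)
The plan is to prove the two implications separately. The forward implication is a one-line computation with the Minkowski form, while the converse is where the real work lies; it rests on the Lorentzian structure of $V_n$ together with the discreteness of the root system $\Phi_n$.

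For the forward implication, suppose $\alpha$ is a periodic root, say $\omega^m\alpha=\alpha$ with $m\ge 1$. Since every element of $W_n$ preserves the Minkowski form and $\omega^m v=\lambda_\omega^m v$, I would simply write
\[ \alpha\cdot v=(\omega^m\alpha)\cdot(\omega^m v)=\alpha\cdot(\lambda_\omega^m v)=\lambda_\omega^m\,(\alpha\cdot v). \]
Because $\lambda_\omega>1$ gives $\lambda_\omega^m\neq 1$, this forces $\alpha\cdot v=0$.

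For the converse I would first record two preliminaries. By Proposition~\ref{P:orthogonal}, $v\cdot v=0$, so $v$ is isotropic, and $\omega$ preserves the hyperplane $v^\perp$, since for $x\in v^\perp$ one has $(\omega x)\cdot v=x\cdot(\omega^{-1}v)=\lambda_\omega^{-1}(x\cdot v)=0$. Next I would analyze the spectrum of $\omega$ on $V_n$. In the regime $\lambda_\omega>1$ one necessarily has $n\ge 10$, so $\kappa_n\cdot\kappa_n=9-n<0$ and $V_n=L_n\otimes\mathbb{R}$ has Lorentzian signature $(1,n-1)$. Grouping the generalized eigenspaces of $\omega$ by the modulus of the eigenvalue yields a real $\omega$-invariant splitting $V_n=E_+\oplus E_-\oplus E_0$, where $E_\pm$ collect the eigenvalues of modulus $>1$ and $<1$. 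A short argument shows $E_+$ and $E_-$ are each totally isotropic (distinct expanding eigenvalues have product $\neq 1$, forcing the corresponding eigenvectors to be mutually orthogonal and self-isotropic) and dually paired; since the positive index of the form is $1$, this forces $\dim E_+=\dim E_-=1$. Hence $\lambda_\omega$ is real and simple with $E_+=\mathbb{R}v$, there is a companion isotropic eigenvector $w$ with $\omega w=\lambda_\omega^{-1}w$ and $v\cdot w\neq 0$ spanning $E_-$, and the form is negative definite on $E_0$ with every eigenvalue of modulus $1$.

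With this in hand the converse is quick. Given a root $\alpha$ with $\alpha\cdot v=0$, write $\alpha=av+bw+u$ with $u\in E_0$. Since $v\cdot v=0=u\cdot v$ and $v\cdot w\neq 0$, the hypothesis $\alpha\cdot v=b\,(w\cdot v)=0$ forces $b=0$: the component of $\alpha$ along the contracting direction vanishes. Therefore, for $k\ge 0$,
\[ \omega^{-k}\alpha=a\,\lambda_\omega^{-k}\,v+\omega^{-k}u, \]
whose $v$-part tends to $0$ and whose $E_0$-part stays on a fixed sphere of the negative definite form (as $\omega|_{E_0}$ lies in a compact orthogonal group). Thus the backward orbit $\{\omega^{-k}\alpha:k\ge 0\}$ is bounded; being a bounded set of roots, hence of integer vectors in $\zm$, it is finite. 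A bounded backward orbit of the invertible map $\omega^{-1}$ on a discrete set must be a cycle, so $\omega^{-p}\alpha=\alpha$ for some $p\ge 1$ and $\alpha$ is periodic (and then $\omega^p\alpha=\alpha$ with $b=0$ also forces $a=0$, placing $\alpha$ in $E_0$). The main obstacle I anticipate is precisely the spectral step: establishing that $\lambda_\omega$ is the unique eigenvalue of modulus $>1$, that it is real and simple, and that its contracting partner pairs nontrivially with $v$. This is exactly where the Lorentzian signature $(1,n-1)$ is indispensable, since it caps the dimension of a totally isotropic subspace at $1$, and it is the only place the strict inequality $\lambda_\omega>1$ (rather than merely $\lambda_\omega\neq\pm 1$) is used to produce a genuine expanding--contracting splitting.
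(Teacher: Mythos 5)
Your forward implication is the same one-line computation the paper uses: expand $\alpha\cdot v=(\omega^m\alpha)\cdot(\omega^m v)=\lambda_\omega^m(\alpha\cdot v)$ and conclude from $\lambda_\omega^m\neq 1$. The difference is entirely in the converse. The paper does not prove it at all; it simply cites Theorem~2.7 of \cite{McMullen:2007}. You instead give a self-contained argument, and it is essentially the standard one behind McMullen's result: use Proposition~\ref{P:orthogonal} to see $v$ is isotropic, observe that for $n\ge 10$ the form on $V_n$ has signature $(1,n-1)$, split $V_n=E_+\oplus E_-\oplus E_0$ by eigenvalue modulus, use total isotropy of $E_\pm$ plus the Lorentzian bound on isotropic subspaces to get $\dim E_\pm=1$ and a negative definite $E_0$, and then show that a root orthogonal to $v$ has no contracting component, so its backward orbit is bounded in a lattice, hence finite, hence a cycle. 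The details you sketch are all sound (the orthogonality of generalized eigenspaces for eigenvalue pairs with product $\neq 1$, the compactness of $\omega|_{E_0}$, and the passage from a bounded orbit of an invertible map on a discrete set to genuine periodicity), and the final parenthetical observation that periodicity then forces the $v$-component to vanish as well, placing $\alpha$ in $E_0$, is correct. What your route buys is independence from the external reference and an explicit identification of where the hypothesis $\lambda_\omega>1$ (as opposed to $\lambda_\omega\neq\pm1$) is genuinely needed; what the paper's citation buys is brevity. If you wanted to tighten one spot, it would be the sentence asserting total isotropy of $E_\pm$, which as written only treats honest eigenvectors and should be phrased for generalized eigenvectors, but this is a routine induction and not a gap in the idea.
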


\begin{proof}
Suppose $\alpha$ is a periodic root with period $p\ge 1$. Since $\omega$ preserves the inner product, we have \[  \alpha\cdot v = \omega(\alpha) \cdot \omega(v) = \lambda_\omega \omega(\alpha) \cdot v = \lambda_\omega^2 \omega^2(\alpha)\cdot v = \cdots= \lambda_\omega ^p \omega^p(\alpha) \cdot v  \]
But since $\omega^p \alpha = \alpha$  and $\lambda_\omega >1$ we have 
\[  \alpha\cdot v = \lambda_\omega ^p \alpha \cdot v \quad \Rightarrow \quad \alpha\cdot v =0. \]
The converse follows from Theorem 2.7 in \cite{McMullen:2007}.
\end{proof}

\section{Marked Blowups}\label{S:gMB}
Let $\pi:S \to \p^2$ be a blowup of $\p^2$ along a finite set $P=\{p_1, \dots, p_n\}$ of (possibly infinitely near) points of $\p^2$. Then the birational morphism $\pi$ factors into a sequence of blowups of a point \[ \pi : S = S_n \xrightarrow{\pi_n} S_{n-1} \xrightarrow{\pi_{n-1}} \ \  \cdots  \ \  \xrightarrow{\pi_3}S_2 \xrightarrow{\pi_2} S_1 \xrightarrow{\pi_1} S_0 = \mathbf{P}^2 \] 
 where $\pi_i: S_i \to S_{i-1}$ is a blowup of a point $p_i \in S_{i-1}$. 

 Let $\cE_i$ be the total transformation of the exceptional curve over a point $p_i \in S_{i-1}$ and $\ve_i = [\cE_i]$ be its class in $Pic(S)$. \[ \cE_i = ( \pi_i \circ \cdots \circ \pi_n)^* p_i \qquad \text{ and } \qquad \ve_i = [\cE_i]\] Then $\ve_1, \ve_2, \dots, \ve_n$ together with $\ve_0= [H]$, the class of the pullback of a generic line in $\p^2$, form a \textit{geometric basis} for $Pic(S)$ equipped with inherited intersection pairing. \[ Pic(S) = \langle \ve_0, \ve_1, \dots, \ve_n \rangle, \qquad \text{where}\] \[ \ve_0\cdot \ve_0 =1,\ \  \ve_i\cdot \ve_i = -1, i\ge 1, \ \ \ \text{and}\ \ \ve_i \cdot \ve_j=0, i \ne j \] Thus we can identify $Pic(S)$ to a lattice $\mathbb{Z}^{1,n}$ with the Minkowski product. And we can consider the Weyl group $W_n$ as a subgroup of $GL(Pic(S))$. 

\subsection{Marked Blowup}
 
 A marked blowup was defined in \cite{McMullen:2007}. A marked blowup in \cite{McMullen:2007} only allows blowing up a set of distinct points in $\p^2$. Let us define a slightly modified version to allow blowing up infinitely near points. 
 
 \begin{defn} A Marked Blowup $(S, \phi)$ is a smooth projective surface $S$ equipped with an isomorphism $\phi: \zm \to Pic(S)$ such that 
 \begin{enumerate}
 \item The marking $\phi$ sends the Minkowski inner product to the intersection pairing,
 \item there exists a set $P$ of $n$ (possibly infinitely near) points $p_1, \dots, p_n$ such that  $p_j$ is infinitely near to $p_i$ implies $j >i$, and each $p_i$ has at most one infinitely near point of order 1,
 \item there exists a sequence of blowups $\pi:S \to \p^2$ along a set $P$, and
 \item the marking $\phi$ satisfies $\phi(e_0) = [H]$  and $\phi(e_i) = [\cE_i]$ for $i=1, \dots, n$, where $H$ is the pullback of a generic line in $\p^2$ and $\cE_i \subset S$ is the total transform of the exceptional curve over $p_i$.
 \end{enumerate}
 \end{defn}

Notice that the set $P$ of base points can be written as \[ \{ p_1, \dots p_n\} \ = \{ q^{(j)}_i, 1 \le j \le h_i, 1 \le i \le k\} \] where $ \sum_i  h_i = n$, $q^{(j+1)}_i$ is infinitely near of order one to $q^{(j)}_i$, and $q^{(1)}_1, q^{(1)}_2, \dots, q^{(1)}_k$ are distinct points in $\mathbb{P}^2$. We call $h_i$ the \textit{height} of a point $q_i^{(1)}$ in $\mathbb{P}^2$. Thus each $\cE_i$ is either an irreducible $(-1)$ curve if there is no infinitely near point to $p_i$ or a tower of $h-1$ $(-2)$ curves with $(-1)$ curve at the top if $p_i$ has an infinitely near point of order $j$ for $j=1,\dots, h-1$.
  The canonical class of $S$ is $\kappa_S = - 3 \ve_0 + \sum_{i\ge 1} \ve_i = \phi(\kappa_n)$ where $\kappa_n$ is the canonical vector in $\zm$. Thus the set of elements in $Pic(S)$ orthogonal to $\kappa_S$  corresponds to the root system $\Phi_n$ in $\zm$.
  
\subsection{Effective Irreducible and Reduced Anti-canonical divisor}
Suppose $S$ has an effective irreducible and reduced anti-canonical divisor. Let $C$ be a reduced irreducible curve such that $[C] = - \kappa_S$ and suppose \[P= \ \{ p_1, \dots p_n\} \ \ =\  \{ q^{(j)}_i, 1 \le j \le h_i, 1 \le i \le k\}.\] Then $C_i:=(\pi_i \circ \cdots \circ  \pi_n) C$ in  $S_{i-1}$ contains $p_i$  for each $1 \le i \le n$. It follows that 
\begin{lem} Suppose $S$ has an effective, irreducible, and reduced anti-canonical divisor and suppose $(S,\phi)$ is a marked blowup. Let $C$ be a reduced irreducible curve such that $[C] = -\kappa_S$. If $p_j$ is an infinitely near point to $p_i$ of order $1$, then $j>i$ and $p_j  = C_j \cap \mathcal{F}(p_i)$ where  $\mathcal{F}(p_i)$ is the strict transform of the exceptional curve over a point $p_i$ in $S_{j-1}.$ 
\end{lem}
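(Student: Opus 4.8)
The plan is to reduce the set equality to a single intersection-number computation on $S_{j-1}$. The inequality $j>i$ is immediate: it is part of condition (2) in the definition of a marked blowup that $p_j$ infinitely near to $p_i$ forces $j>i$. For the rest I first record that $p_j$ lies on both curves. Since $p_j$ is infinitely near $p_i$ of order $1$, by definition it lies on the strict transform of the exceptional curve $\mathcal{E}_i$ over $p_i$, that is $p_j\in\mathcal{F}(p_i)$; and applying the fact noted just above the statement, that $C_i$ contains $p_i$ for each $i$, with $i=j$ gives $p_j\in C_j$. Thus $p_j\in C_j\cap\mathcal{F}(p_i)$, and it remains to show this intersection contains no other point.

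To do so I would compute the classes of $C_j$ and $\mathcal{F}(p_i)$ in $Pic(S_{j-1})$, whose geometric basis is $\mathbf{e}_0,\dots,\mathbf{e}_{j-1}$. First, $[\mathcal{F}(p_i)]=\mathbf{e}_i$: the curve $\mathcal{E}_i$ has class $\mathbf{e}_i$ in $Pic(S_i)$, and the class of its strict transform drops by $\mathbf{e}_m$ precisely when the center $p_m$ (with $i<m\le j-1$) of a further blowup lies on the current strict transform of $\mathcal{E}_i$, equivalently when $p_m$ is infinitely near $p_i$ of order $1$. By hypothesis $p_i$ has at most one infinitely near point of order $1$, namely $p_j$; since every such index satisfies $m\le j-1<j$, no intermediate center of this type occurs, so the class is unchanged. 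Second, $[C_j]=-\kappa_{S_{j-1}}=3\mathbf{e}_0-\sum_{k=1}^{j-1}\mathbf{e}_k$. This follows from $[C]=-\kappa_S$: writing each blowdown $S_m\to S_{m-1}$ and comparing the $\mathbf{e}_m$-coefficient of $[C]$ with that of the total transform of $C_m$ shows $\mathrm{mult}_{p_m}(C_m)=1$, so $C_m$ is smooth at $p_m$ and its strict transform stays anti-canonical at every stage.

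With these classes in hand, the Minkowski/intersection pairing gives
\[ C_j\cdot\mathcal{F}(p_i)=\Big(3\mathbf{e}_0-\sum_{k=1}^{j-1}\mathbf{e}_k\Big)\cdot\mathbf{e}_i=-\,\mathbf{e}_i\cdot\mathbf{e}_i=1. \]
Since $C_j$ and $\mathcal{F}(p_i)$ are distinct irreducible curves meeting at $p_j$ and their total intersection number is $1$, they meet transversally at $p_j$ and nowhere else; hence $C_j\cap\mathcal{F}(p_i)=\{p_j\}$, as claimed.

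I expect the main obstacle to be the bookkeeping behind $[\mathcal{F}(p_i)]=\mathbf{e}_i$: one must argue cleanly that ``$p_m$ lies on the strict transform of $\mathcal{E}_i$'' is equivalent to ``$p_m$ is infinitely near $p_i$ of order $1$,'' and then invoke the uniqueness clause of the marked-blowup definition to exclude every intermediate center. The computation of $[C_j]$ is the other delicate point, since a priori a base point could sit at a singular point of the anti-canonical curve; the hypothesis $[C]=-\kappa_S$ is exactly what forces each base point to be a multiplicity-one (hence smooth) point of its curve, keeping all strict transforms anti-canonical.
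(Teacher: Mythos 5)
Your argument is correct. The paper itself offers no proof of this lemma --- it is stated as an immediate consequence (``It follows that'') of the preceding observation that $C_i$ contains $p_i$ for each $i$ --- so the substance you supply, namely that the intersection consists of \emph{only} the point $p_j$, is exactly the part left implicit, and your route via $[C_j]=-\kappa_{S_{j-1}}$, $[\mathcal{F}(p_i)]=\ve_i$, and $C_j\cdot\mathcal{F}(p_i)=1$ is the natural way to get it. The multiplicity-one computation for $[C_j]$ and the exclusion of intermediate centers from the strict transform of $\mathcal{E}_i$ are both sound. One small caution on the step you yourself flag: ``$p_m$ lies on the strict transform of $\mathcal{E}_i$'' is not literally equivalent to ``$p_m$ is infinitely near $p_i$ of order $1$,'' since $p_m$ could sit at a corner where the strict transform of $\mathcal{E}_i$ meets a later exceptional divisor $\mathcal{E}_\ell$, in which case $p_m$ is infinitely near $p_i$ of order $\ge 2$. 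But that scenario requires $p_\ell$ itself to be an order-$1$ infinitely near point of $p_i$ in the base locus, hence $p_\ell=p_j$ by the uniqueness clause, which is impossible for $\ell<m\le j-1$; so your conclusion that no intermediate center lies on the strict transform, and hence $[\mathcal{F}(p_i)]=\ve_i$, stands.
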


Also, we have

\begin{lem} Suppose $S$ has an effective, irreducible, and reduced anti-canonical divisor and suppose $(S,\phi)$ is a marked blowup. Let $C$ be a reduced irreducible curve such that $[C] = -\kappa_S$. The base locus $P$ is completely determined by a set of pairs (or base points with heights)  $\{ (q_i, h_i), 1 \le i \le k  \} \subset \pi C \times \mathbb{N}$ where $q_i = q_i^{(1)}$ with height $h_i$.
\end{lem}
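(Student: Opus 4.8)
The plan is to recover the entire base locus $P$ — including all of its infinitely near structure — from the finite data $\{(q_i,h_i)\}$ by iterating the preceding lemma one level of infinite nearness at a time. Since the roots $q_1^{(1)},\dots,q_k^{(1)}$ are distinct points of $\p^2$ lying on $X=\pi C$, and the points infinitely near to a given root remain in a neighborhood of that root, the $k$ towers $\{q_i^{(1)},\dots,q_i^{(h_i)}\}$ may be treated independently. It therefore suffices to show that, with the curve $C$ (equivalently $X=\pi C$) held fixed, each single tower is uniquely determined by its root $q_i=q_i^{(1)}$ and its height $h_i$.

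Fix $i$ and induct on $j$, $1\le j\le h_i$. The base case is immediate, since $q_i^{(1)}=q_i$ is part of the given data and lies on $X$. For the inductive step, assume $q_i^{(1)},\dots,q_i^{(j)}$ have been recovered. Blowing these up (the earlier towers play no role near $q_i$) produces an intermediate surface on which both the strict transform $C'$ of $C$ and the strict transform $\mathcal{F}(q_i^{(j)})$ of the exceptional curve over $q_i^{(j)}$ are determined. Because $q_i^{(j+1)}$ is infinitely near to $q_i^{(j)}$ of order one, the preceding lemma identifies it as
\[ q_i^{(j+1)} \;=\; C' \cap \mathcal{F}\bigl(q_i^{(j)}\bigr), \]
an expression depending only on $C$ and on the already-recovered points $q_i^{(1)},\dots,q_i^{(j)}$. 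Hence $q_i^{(j+1)}$ is forced, completing the induction; taking the union over $i$ then recovers $P=\bigcup_{i=1}^{k}\{q_i^{(1)},\dots,q_i^{(h_i)}\}$.

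The one step that genuinely uses the hypotheses is the assertion that $C'\cap\mathcal{F}(q_i^{(j)})$ is a single, well-defined point, so that no additional choice can enter at any level of the tower. This is exactly the content of the preceding lemma, and it is where irreducibility and reducedness of $C$ are needed: the strict transform of an irreducible reduced curve meets each newly created exceptional curve in one point, cut out along the relevant tangent direction. Granting this, the argument is a purely formal induction, and the conclusion is that the base locus — and hence the marked blowup $(S,\phi)$, once $C$ is fixed — is completely encoded by the pairs $\{(q_i,h_i)\}\subset \pi C\times\mathbb{N}$.
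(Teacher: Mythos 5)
Your proof is correct and follows essentially the same route as the paper: the paper's own argument is precisely the observation that, by the preceding lemma, each $q_i^{(j)}$ for $j\ge 2$ is the unique intersection of the strict transform of $\pi C$ with the strict transform of the exceptional curve over $q_i^{(j-1)}$, which is exactly the inductive step you spell out. Your version merely makes the induction and the independence of the $k$ towers explicit, which the paper leaves implicit.
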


\begin{proof}
Let $C$ be the anti-canonical curve.
By the previous Lemma, we have that $q_i^{(j)}$ is the unique intersection of the strict transform of $\pi C$ and the strict transform of the exceptional curve over $q_i^{(j-1)}$ for all $j=2, \dots h_i$ and $i = 1, \dots, k$. 
\end{proof}

Notice that all the base points with heights for a marked blowup belong to some strict transform of $\pi(C)$. Thus we have
 
 \begin{lem}\label{L:selfintersectionreduction} Suppose $S$ has an effective, irreducible, and reduced anti-canonical divisor and suppose $(S,\phi)$ is a marked blowup. Let $C$ be a reduced irreducible curve such that $[C] = -\kappa_S$. If $X\subset S$ is a smooth rational curve of degree $d$ and the intersection $\pi(X) \cap \pi(C)$ consists of $m_i \cdot q_i^{(1)} $ and $n_j \cdot x_j $ (counted with multiplicity) where $x_j \notin P$ , then 
\begin{equation}\label{E:selfintersection}  X \cdot  X = d^2 -  \sum_i \min\{m_i, h_i\}  \ge d( d-3). \end{equation}
\end{lem}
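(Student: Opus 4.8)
The plan is to read $X\cdot X$ off the class of $X$ in the geometric basis and then match the resulting sum of squared multiplicities against the intersection data of $\pi(X)$ with $\pi(C)$. Writing
\[ [X] \ = \ d\,\ve_0 - \sum_\ell \mu_\ell\,\ve_\ell \]
in the geometric basis, where $\mu_\ell$ is the multiplicity of the successive strict transform of $\pi(X)$ at the base point $p_\ell$, the intersection form gives at once
\[ X\cdot X \ = \ d^2 - \sum_\ell \mu_\ell^2 . \]
So everything reduces to identifying $\sum_\ell \mu_\ell^2$ with $\sum_i \min\{m_i,h_i\}$ and then bounding the latter by $3d$.

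For the identification I would argue one tower at a time. Fix a base point $q_i^{(1)}$ of height $h_i$. By the lemmas already established, the points $q_i^{(1)},\dots,q_i^{(h_i)}$ are exactly the first $h_i$ infinitely near points cut out along the successive strict transforms of $\pi(C)$; moreover, since $[C]=-\kappa_S = 3\ve_0 - \sum_{\ell\ge1}\ve_\ell$ has every $\ve_\ell$-coefficient equal to $1$, the curve $\pi(C)$ has multiplicity one at each of these points, i.e. the tower is a single smooth branch of $\pi(C)$. I would then show that the strict transform of $\pi(X)$ passes through precisely $q_i^{(1)},\dots,q_i^{(\min\{m_i,h_i\})}$ among the base points of this tower, each with multiplicity one, and misses the remaining $q_i^{(j)}$: the contact order of $\pi(X)$ with the smooth branch of $\pi(C)$ at $q_i^{(1)}$ is by definition the local intersection number $m_i$, and this contact is spread over the first $m_i$ infinitely near points of the branch, truncated at height $h_i$ because only $h_i$ of them are blown up. Hence $\sum_{j}\mu_{q_i^{(j)}}^2 = \min\{m_i,h_i\}$, and summing over $i$ gives $\sum_\ell \mu_\ell^2 = \sum_i \min\{m_i,h_i\}$, which is the claimed equality.

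The inequality is then a Bezout count. The points $x_j\notin P$ contribute $n_j\ge 0$ to $\pi(X)\cap\pi(C)$, so
\[ \sum_i m_i \ \le\ \sum_i m_i + \sum_j n_j \ =\ \deg\pi(X)\cdot\deg\pi(C) \ =\ 3d, \]
using that $\pi(C)$ is a cubic (its class has $\ve_0$-coefficient $3$) and that $\pi(X)$ and $\pi(C)$ share no component. The last point is free: if they shared a component then, being irreducible, they would be equal, forcing $X=C$ and hence $[X]=-\kappa_S$; but adjunction would then give $X$ arithmetic genus $1$, contradicting that $X$ is rational. Since $\min\{m_i,h_i\}\le m_i$, we conclude $\sum_i\min\{m_i,h_i\}\le 3d$, and therefore $X\cdot X = d^2 - \sum_i\min\{m_i,h_i\}\ \ge\ d^2 - 3d = d(d-3)$.

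The main obstacle is the middle step: converting the intersection multiplicity $m_i$ into a count of base points through which $X$ passes, each with multiplicity one. This is where the smoothness of $X$ and the single smooth-branch structure of the anti-canonical tower must be used together, and the careful bookkeeping of how the contact order distributes over the first $h_i$ infinitely near points (and is truncated at height $h_i$) is precisely what produces the sharp truncation $\min\{m_i,h_i\}$ rather than $m_i$. I expect this local analysis of $X$ along each tower, rather than the Bezout estimate, to be the delicate part of the argument.
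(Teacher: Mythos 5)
Your argument is essentially the paper's: the same tower-by-tower local analysis converting the contact order $m_i$ of $\pi(X)$ with the smooth branch of $\pi(C)$ at $q_i^{(1)}$ into passage of the strict transforms through exactly the first $\min\{m_i,h_i\}$ points of that tower, followed by the same Bezout bound $\sum_i m_i\le 3d$; you merely organize the bookkeeping through the class $[X]=d\,\ve_0-\sum_\ell\mu_\ell\ve_\ell$, where the paper instead tracks when the successive strict transforms of $\pi(X)$ become disjoint from those of $\pi(C)$. The one step you rightly flag as delicate --- that each $\mu_\ell\le 1$, i.e.\ that $\pi(X)$ cannot have, say, a node at a base point with both branches transverse to $\pi(C)$ (which would give $m_i=2$ but a drop of $4$ in self-intersection) --- is treated no more carefully in the paper's own proof, which likewise reduces to the assertion that the only singularities of $\pi(X)$ are transversal self-intersections and then counts the drop as $m_1$ per tower, so your proposal matches the paper in both route and level of rigor.
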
 

\begin{proof}
Since $x_j$'s are not in the base locus $P$, $x_j$ would not change the self-intersection number. 
Also, the total number of the intersection of $\pi X$ and $\pi C$ is $3d$ , we have $\sum_i m_i \le 3d$. 
Since $X$ is a smooth curve in $S$, the singularities (if there is any) of $\pi(X)$ are from the transversal intersections.

Now let us focus on $q_i^{(1)} \in P$. 
For simplicity, let us assume that $p_i = q_1^{(i)}$ for $i = 1, \dots h_i$ and thus $p_i$ is the intersection of exceptional curve $\mathcal{F}_i$ over $p_{i-1}$ and the strict transform of $\pi(C)$ in $S_{i-1}$.
Since computation for any base points is identical, we may assume that $m_j = 0$ for all $j \ge 2$. 
Suppose $m_1 =1$. In $S_1$, the strict transform of $\pi X$ intersects $\mathcal{F}_1$ but is disjoint from $(\pi_2 \circ \cdots \circ  \pi_n) C$. Thus the strict transform of $X$ in $S_i$ doesn't contain $p_{i+1}$ for all $i\ge 2$. It follows that  $ X \cdot  X = d^2-1$.
If $m_1=2$, because only singular points $\pi X \in \mathbf{P}^2$  are transversal self-intersections, there are only two possibilities: (1) the strict transform of $\pi X$ in $S_1$ is disjoint from $(\pi_2 \circ \cdots \circ  \pi_n) C$ or (2) the strict transform of $\pi X$ in $S_1$ intersects $(\pi_2 \circ \cdots \circ  \pi_n) C$ at $p_2$. In either case, the strict transform of $\pi X$ in $S_2$ is disjoint from  $(\pi_3 \circ \cdots \circ  \pi_n) C$. 
Continuing this procedure, we see that for $1 \le m_1\le h_1$, we have $ X \cdot  X = d^2-m_1$. If $m_1= h_1+ s$ for some $s>0$, then $ X \cdot  X = d^2-h_1$ since there are no more blowups over an infinitely near point of $p_{h_i}$. 
Thus we have $ X \cdot  X = d^2-\min \{m_1, h_1\}$. Repeating this for $j>1$ if necessary, we have $ X \cdot  X = d^2 - \sum_i \min\{m_i, h_1\} $. Combining the fact that  $\sum_i m_i \le 3d$, we have the inequality in (\ref{E:selfintersection}).
\end{proof}
%
%
%
%
%
%
%

 \subsection{Nodal roots}
 We say $\alpha \in \Phi_n^+$ is a \textit{nodal root} if $\alpha \in \Phi_n^+$ and $\phi(\alpha)$ is effective and irreducible. Let \[ \Phi(\phi)^+ \ := \{ \alpha \in \Phi_n^+; \ \phi(\alpha) \text{ is effective. }\} \] be the set of nodal roots. There are two possibilities. If $\alpha \in \Phi(\phi)^+$ then 
 \begin{itemize}
 \item either $\alpha = \ie_i - \ie_j$ with $i<j$ and $p_j$ is infinitely near to $p_i$ of the first order
 \item or $\phi(\alpha)= [D]$ where $D$ projects to a smooth rational curve in $S$. In this case, we say $\alpha$ is a geometric nodal root. 
 \end{itemize}
 
 \begin{prop}\label{P:nodaldegree} Suppose $S$ has an effective, irreducible, and reduced anti-canonical divisor and suppose $(S,\phi)$ is a marked blowup. Let $C$ be a reduced irreducible curve such that $[C] = -\kappa_S$. If $\alpha$ is a geometric nodal root with $\phi(\alpha)= [D]$, then either $\text{deg } \pi D =1 $ or  $\text{deg }\pi D =2 $
 \end{prop}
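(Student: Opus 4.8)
The plan is to read off the degree constraint directly from the self-intersection bound of Lemma~\ref{L:selfintersectionreduction}, using only the defining numerical property of a root.

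First I would set $d = \deg \pi D$. Since $\alpha$ is a root we have $\alpha \cdot \alpha = -2$, and because the marking $\phi$ carries the Minkowski form to the intersection pairing this gives $D \cdot D = \phi(\alpha)\cdot\phi(\alpha) = -2$. As $\alpha$ is a \emph{geometric} nodal root, $\phi(\alpha)$ is the class of an irreducible curve $D$ whose image $\pi D$ is a genuine plane curve---this is precisely what distinguishes it from the excluded case $\alpha = \ie_i - \ie_j$, where $D$ would collapse into the exceptional locus---so $d \ge 1$. Moreover, being effective, irreducible, and a $(-2)$-class orthogonal to $\kappa_S$, adjunction forces $D$ to be a smooth rational curve; hence Lemma~\ref{L:selfintersectionreduction} applies with $X = D$.

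The inequality furnished by that lemma reads $D \cdot D \ge d(d-3)$. Substituting $D \cdot D = -2$ yields $d(d-3) \le -2$, that is, $(d-1)(d-2) \le 0$, which confines $d$ to the interval $1 \le d \le 2$. Since $d$ is a positive integer, the only possibilities are $d=1$ and $d=2$, as claimed.

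The argument is short because the substantive work is already packaged in Lemma~\ref{L:selfintersectionreduction}; the only points requiring care are confirming that a geometric nodal root has $d \ge 1$ (so that $\pi D$ is honestly a curve and the hypotheses of that lemma are met) and that $D$ is smooth and rational. I expect no genuine obstacle at this stage---the heart of the matter was establishing the bound $X\cdot X \ge d(d-3)$, which in turn rested on the structural fact that every base point lies on a strict transform of the anti-canonical curve, giving $\sum_i m_i \le 3d$.
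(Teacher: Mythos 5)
Your proof is correct and follows exactly the paper's argument: apply Lemma~\ref{L:selfintersectionreduction} to get $D\cdot D \ge d(d-3)$, combine with $D\cdot D = -2$, and conclude $(d-1)(d-2)\le 0$, hence $d\in\{1,2\}$. The extra care you take in checking that $d\ge 1$ and that $D$ is smooth rational (so the lemma applies) is a sensible elaboration of hypotheses the paper leaves implicit, but the route is the same.
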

 
 \begin{proof} Let $d =\text{deg }\pi D$. By Lemma \ref{L:selfintersectionreduction}, we see that $D\cdot D  \ge d(d-3)$. Since $\alpha$ is a nodal root, $D\cdot D = -2$. Thus only possibilities are $d=1$ or $d=2$. 
 \end{proof}

 \begin{cor}\label{C:finite} Suppose $S$ has an effective, irreducible, and reduced anti-canonical divisor and suppose $(S,\phi)$ is a marked blowup. Then there are, at most, finitely many nodal roots. 
 \end{cor}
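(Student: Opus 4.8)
The plan is to show that the set of nodal roots splits into two finite pieces according to the dichotomy recorded just before Proposition \ref{P:nodaldegree}: the roots of the form $\ie_i - \ie_j$ with $p_j$ infinitely near $p_i$, and the geometric nodal roots $\alpha$ with $\phi(\alpha) = [D]$ for a smooth rational curve $D$. For the first type, finiteness is immediate: there are only $n$ base points, each with at most one infinitely near point of order one (by the definition of a marked blowup), so there are at most $n$ such roots. The substantive task is to bound the number of geometric nodal roots, and here the plan is to leverage Proposition \ref{P:nodaldegree}, which asserts that any geometric nodal root $\alpha$ with $\phi(\alpha) = [D]$ satisfies $\deg \pi D \in \{1,2\}$.

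The key step is to observe that a nodal root is determined by its class in $Pic(S)$, and that a degree-one or degree-two class orthogonal to $\kappa_S$ with self-intersection $-2$ can be written explicitly in the geometric basis with a strictly bounded collection of coefficients. Concretely, I would write $\phi(\alpha) = d\,\ve_0 - \sum_{i=1}^n m_i \ve_i$ with $d \in \{1,2\}$. The conditions $\phi(\alpha)\cdot \phi(\alpha) = -2$ and $\phi(\alpha)\cdot \kappa_S = 0$ (equivalently $\alpha \in \Phi_n^+$) translate into
\begin{equation}\label{E:coeffconstraints}
 d^2 - \sum_{i=1}^n m_i^2 = -2, \qquad 3d - \sum_{i=1}^n m_i = 0.
\end{equation}
For $d=1$ these force $\sum m_i = 3$ and $\sum m_i^2 = 3$, so exactly three of the $m_i$ equal $1$ and the rest vanish; for $d=2$ they force $\sum m_i = 6$ and $\sum m_i^2 = 6$, so exactly six of the $m_i$ equal $1$ and the rest vanish. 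In either case the multiplicities $m_i$ lie in $\{0,1\}$ and only boundedly many are nonzero, so there are at most $\binom{n}{3} + \binom{n}{6}$ such classes. Since $\phi$ is an isomorphism and a root is recovered from its image class, this bounds the number of geometric nodal roots.

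Combining the two estimates, the total number of nodal roots is at most $n + \binom{n}{3} + \binom{n}{6}$, which is finite. The main obstacle I anticipate is not the counting but ensuring the degree bound from Proposition \ref{P:nodaldegree} is applied correctly: it controls $\deg \pi D$, and one must be careful that the coefficient $d$ appearing in the class $\phi(\alpha)$ really is this degree and that the effectivity and irreducibility built into the definition of a nodal root justify representing $\phi(\alpha)$ by an honest smooth rational curve $D$, so that Proposition \ref{P:nodaldegree} is genuinely applicable. Once that identification is secure, the solution of the integer system \eqref{E:coeffconstraints} is routine and the finiteness follows at once. A cleaner alternative, avoiding the explicit enumeration, is simply to note that by Proposition \ref{P:nodaldegree} every nodal root has $\deg \pi D \le 2$, hence is determined by a class with bounded degree in the fixed lattice $\zm$, and there are only finitely many lattice vectors of bounded degree and fixed self-intersection $-2$; I would present the explicit count as it also makes transparent the familiar description of nodal classes.
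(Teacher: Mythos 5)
Your proposal is correct and follows essentially the same route as the paper: both split the nodal roots into the $\ie_i-\ie_j$ type and the geometric type, and both invoke Proposition \ref{P:nodaldegree} to confine the latter to the finite set of classes $d\ve_0-\sum m_i\ve_i$ with $d\in\{1,2\}$ and bounded multiplicities. Your explicit solution of the constraints $\sum m_i=3d$, $\sum m_i^2=d^2+2$ just makes concrete the description $e_0-(e_{i_1}+e_{i_2}+e_{i_3})$ or $2e_0-(e_{i_1}+\cdots+e_{i_6})$ that the paper records immediately after the corollary via Dolgachev--Ortland.
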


\begin{proof}
Since there are only finitely many blowups, the set \[ \mathcal{A}=  \{ e_i - e_j, i<j\} \cup \{ de_0 - \sum m_i e_i : d=1,2 , \sum m_i \le 3 d, m_i \ge 0 \}\] is finite. By Proposition \ref{P:nodaldegree}, we see all nodal roots are in $\mathcal{A}$.
 \end{proof}
 
  By Dolgachev and Ortland \cite[Proposition~4]{Dolgachev-Ortland}, we see that a geometric nodal root $\alpha$ is given by 
 \[ \alpha = e_0 - (e_{i_1} + e_{i_2}+e_{i_3}), \quad \text{or} \quad \alpha = 2 e_0 - (e_{i_1} + \cdots +e_{i_6}) \qquad i_j \ne i_k \text{ if } j \ne k. \]

\begin{lem}\label{C:nodalinter}
If $\alpha\ne \beta$ are two nodal roots, then either $\alpha\cdot \beta = 0$ or $\alpha\cdot \beta = 1$.
\end{lem}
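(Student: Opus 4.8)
The plan is to prove the two bounds $0\le \alpha\cdot\beta$ and $\alpha\cdot\beta\le 1$ separately. For the lower bound I would use that, by definition, a nodal root $\alpha$ gives a class $\phi(\alpha)=[D_\alpha]$ of an irreducible reduced curve with $D_\alpha\cdot D_\alpha=\alpha\cdot\alpha=-2$. If $\alpha\ne\beta$ are nodal roots, then $D_\alpha$ and $D_\beta$ are distinct irreducible curves on the smooth surface $S$, so they share no common component and $\alpha\cdot\beta=D_\alpha\cdot D_\beta$ counts intersection points with multiplicity; hence $\alpha\cdot\beta\ge 0$. This single observation already discards every negative value that the purely combinatorial computation below would otherwise produce.

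For the upper bound I would argue from the classification. Combining Proposition~\ref{P:nodaldegree} with the Dolgachev--Ortland description, every nodal root has one of the shapes $e_i-e_j$, $e_0-e_{i_1}-e_{i_2}-e_{i_3}$, or $2e_0-(e_{i_1}+\cdots+e_{i_6})$, so there are only finitely many type-pairs to examine. Writing each intersection number in terms of how the index sets of $\alpha$ and $\beta$ overlap, one finds: $(e_i-e_j)\cdot(e_k-e_l)=-\delta_{ik}+\delta_{il}+\delta_{jk}-\delta_{jl}\in\{-1,0,1\}$; an exceptional root against a line- or conic-root is a difference of two $\{0,1\}$-valued overlap counts, again in $\{-1,0,1\}$; and two line-roots give $1-|I\cap J|\in\{-1,0,1\}$. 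In all of these cases the value is at most $1$, so together with the lower bound we conclude $\alpha\cdot\beta\in\{0,1\}$.

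The remaining, and genuinely delicate, cases are a line-root against a conic-root and a conic-root against a conic-root, where the naive overlap count allows the value $2$ (a line and a conic with disjoint index sets give $2-0$, and two conics sharing exactly two indices give $4-2$). The whole force of the lemma is that such a configuration cannot occur among nodal roots, and here I would have to use the anti-canonical curve rather than the local intersection count. Every nodal root $\gamma$ satisfies $\gamma\cdot\kappa_n=0$, hence $D_\gamma\cdot C=0$; since $C$ is irreducible and $D_\gamma\ne C$, each $D_\gamma$ is disjoint from $C$, and every base point lies on $\pi(C)$. The strategy would be to form the effective class $\phi(\alpha+\beta)=[D_\alpha+D_\beta]$, observe that $(\alpha+\beta)\cdot(\alpha+\beta)=2(\alpha\cdot\beta)-4$ would vanish in the value-$2$ case while $(\alpha+\beta)\cdot\kappa_n=0$, and try to contradict the self-intersection bound of Lemma~\ref{L:selfintersectionreduction} or the finiteness of nodal roots in Corollary~\ref{C:finite}. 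The main obstacle is precisely this step: $D_\alpha+D_\beta$ is reducible, so Lemma~\ref{L:selfintersectionreduction} does not apply to it verbatim, and the reflection $s_\alpha(\beta)=\beta+2\alpha$ produced by an intersection of $2$ is effective but not irreducible, so it does not immediately contradict Corollary~\ref{C:finite}. Closing this case therefore requires a global input tying the two extra intersection points back to the base locus on $\pi(C)$ (equivalently, excluding the value $2$ using the placement of the points in the construction), and this is where I expect the real work of the proof to lie.
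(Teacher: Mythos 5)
Your overall architecture is close to the paper's: reduce to the finite list of shapes $e_i-e_j$, $e_0-(e_{i_1}+e_{i_2}+e_{i_3})$, $2e_0-(e_{i_1}+\cdots+e_{i_6})$ coming from Proposition~\ref{P:nodaldegree} and \cite{Dolgachev-Ortland}, and compute the pairings case by case. Your lower bound, $\alpha\cdot\beta\ge 0$ because two distinct irreducible curves on a smooth surface have no common component, is in fact cleaner and more uniform than what the paper writes for the negative values. But the proposal has a genuine gap, and you have located it yourself: you never exclude the value $2$ in the line--conic and conic--conic cases. (Your inventory of bad cases is also slightly short: for two conic-roots $\alpha\cdot\beta=4-|I\cap J|$, so overlaps of $0$ or $1$ would give $4$ or $3$, not only the value $2$ you mention.) Since these are precisely the configurations where the lemma has content, the proof is not complete as it stands, and the auxiliary ideas you float --- $s_\alpha(\beta)=\beta+2\alpha$ versus Corollary~\ref{C:finite}, or $(\alpha+\beta)^2$ versus Lemma~\ref{L:selfintersectionreduction} --- do not close them, as you acknowledge.

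The global input the paper uses is the one you guessed would be needed: every base point lies on the irreducible, reduced image $\pi(C)$ of the anti-canonical curve. For a line-root $\alpha=e_0-(e_{i_1}+e_{i_2}+e_{i_3})$ and a conic-root $\beta=2e_0-(e_{j_1}+\cdots+e_{j_6})$ with disjoint index sets, $D_\alpha+D_\beta$ is a plane cubic passing through the same nine base points of $\pi(C)$; the paper argues (``a cubic curve is determined by $9$ points'') that this forces the cubic through those points to be the reducible curve $D_\alpha+D_\beta$, contradicting the irreducibility of $C$, so the index sets must meet and $\alpha\cdot\beta=2-|I\cap J|\le 1$. For two conic-roots with small overlap, the paper augments each conic by an auxiliary conic to form two quartics $Q_\alpha=D_\alpha+Q'_\alpha$ and $Q_\beta=D_\beta+Q'_\beta$ and invokes a criterion from \cite{Traves:2013} for the existence of a cubic through ten points to derive a contradiction. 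So the decisive step of the lemma is exactly the one you leave open; to finish your proof you would need to import this ``nine points on the irreducible anti-canonical cubic'' argument (or an equivalent one) rather than the reflection and self-intersection estimates you propose.
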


\begin{proof}
The statement is obvious if $\alpha = e_i - e_j$.

Now suppose $\phi(\alpha) = D_\alpha, \phi(\beta)= D_\beta$.
If both $D_\alpha$ and $D_\beta$ are lines, $\alpha\cdot \beta = 1$ or $ 0$ because two points determine a line. 

Suppose $\alpha = e_0 - (e_{i_1} + e_{i_2}+e_{i_3})$ and $\beta= 2 e_0 - (e_{j_1} + \cdots+e_{j_6})$. If $\{ i_1, i_2, i_3\} \cap \{ j_1, \dots, j_6\}=\emptyset$. Since a cubic curve is determined by 9 points, we have that $C= D_\alpha + D_\beta$ is reducible. Thus we have $\alpha\cdot \beta = 1$ or $ 0$. 

Lastly suppose $\alpha =2 e_0 - (e_{i_1} + \cdots +e_{i_6})$ and $\beta= 2e_0 - (e_{j_1} + \cdots+e_{j_6})$. If $|\{ i_1, \dots, i_6\} \cap \{ j_1, \dots, j_6\}|\le 3$ then 
we may assume $\{ i_1, \dots, i_6\} =\{1,\dots, 6\}$ and $ \{ j_1, \dots, j_6\} =\{7,\dots,9, j_1, j_2,j_3\}$ where $j_1, j_2,j_3 \in \{1, \dots, n\}$.
Let us consider two degree 4 curves such that $Q_\alpha= D_\alpha+ Q'_\alpha$ and $Q_\beta= D_\beta+ Q'_\beta$ where $Q'_\alpha$ is a conic passing through $p_4, \dots, p_7$ and $p_{10}$ and $Q'_\beta$ is a conic passing through $p_6, \dots p_{10}$ where $p_i$'s are the base point corresponding $e_i$. By \cite{Traves:2013}, there exist a cubic passing through $p_1, \dots, p_{10}$ if and only if six of $\{p_1, \dots, p_{10}\}$ are in $Q_\alpha \cap Q_\beta$. However $|Q_\alpha \cap Q_\beta|\le 5$. It follows that $\alpha\cdot \beta =0$ because $\alpha \ne \beta$
\end{proof}

We say two nodal roots $\alpha, \beta$ are \textit{linked} if either $\alpha\cdot \beta = 1$ or there exists a finite set of nodal roots $\{ \alpha_i, i=1, \dots k\}$ for some $k \in \mathbb{N}$ such that \[ \alpha\cdot \alpha_1 = \alpha_i, \cdot \alpha_{i+1} = \alpha_k \cdot \beta =1.\]
Let us denote $\mathcal{N}^+(\alpha)$ the set of all nodal roots linked to $\alpha$ and let $\ell^+(\alpha) = | \mathcal{N}^+(\alpha)|$, the number of linked nodal roots linked $\alpha$ including $\alpha$. Since any roots $\alpha$ satisfies $\alpha \cdot \kappa_n =0$, it forms a Hirzebruch-Jung string. In fact it is known \cite[Section~III.2]{Barth:2004} that the intersection form on $\mathcal{N}^+(\alpha)$ is given by Dynkin diagram $A_n, D_n, E_6,E_7$ or $E_8$ with $n\ge 1$.

%
%
%
Also let $\Sigma(\alpha)$ be the span of reflections in $\mathcal{N}^+(\alpha)$
\[ \Sigma(\alpha) = \text{Span} \{ s_\beta: \beta \in \mathcal{N}^+(\alpha)\}. \]

\begin{lem}\label{L:nodal0}
Suppose $\alpha$ is a nodal root such that $\alpha\cdot \beta =0$ for all nodal roots $\beta \ne \alpha$. Let $s= s_\alpha$ be the reflection through $\alpha$. Then 
\[ s(\alpha) = - \alpha, s(\beta) = \beta \text{ for all } \beta \in \Phi^+(\phi) \setminus \{\alpha\} \]
\end{lem}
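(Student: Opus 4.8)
The plan is to apply the definition of the reflection $s_\alpha(x) = x + (x\cdot\alpha)\,\alpha$ directly; both assertions then reduce to one-line computations, so no auxiliary construction is needed.

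First I would record that every root has self-intersection $-2$. This holds for the simple roots $\alpha_i$ by the relation $\alpha_i\cdot\alpha_i = -2$ noted in Section \ref{S:wgroup}, and since $W_n$ preserves the Minkowski form while the root system $\Phi_n$ is the $W_n$-orbit of the simple roots, every $\alpha\in\Phi_n$ satisfies $\alpha\cdot\alpha=-2$. In particular this applies to the nodal root $\alpha$. Substituting $x=\alpha$ into the reflection formula then gives $s(\alpha) = \alpha + (\alpha\cdot\alpha)\,\alpha = \alpha - 2\alpha = -\alpha$, which is the first claim.

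For the second claim, let $\beta\in\Phi^+(\phi)\setminus\{\alpha\}$. Since $\Phi^+(\phi)$ is by definition the set of nodal roots, $\beta$ is a nodal root distinct from $\alpha$, so the standing hypothesis yields $\alpha\cdot\beta=0$. Substituting $x=\beta$ into the reflection formula gives $s(\beta) = \beta + (\beta\cdot\alpha)\,\alpha = \beta$, as required.

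There is no genuine obstacle: the statement is an immediate consequence of the reflection formula together with the orthogonality hypothesis, and the only mild point to flag is the invariance argument establishing $\alpha\cdot\alpha=-2$ for an arbitrary (not necessarily simple) root. The content of the lemma is conceptual rather than computational, namely that an isolated nodal root $\alpha$ is reversed by $s_\alpha$ while the entire remaining nodal configuration is fixed pointwise; this is precisely what permits such a root to be split off in the later realizability arguments without disturbing the other nodal roots.
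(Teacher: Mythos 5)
Your proof is correct and is simply the paper's one-line argument (``trivial since $\alpha\cdot\beta=0$'') written out in full: the reflection formula $s_\alpha(x)=x+(x\cdot\alpha)\,\alpha$ together with $\alpha\cdot\alpha=-2$ and the orthogonality hypothesis gives both claims immediately. The only addition is your justification that every root has self-intersection $-2$, which is a sound (and worthwhile) observation but does not change the approach.
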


\begin{proof}
This is trivial since $\alpha\cdot \beta =0$.
\end{proof}

\begin{lem}\label{L:nodal1}
Let $\omega \in W_n$. Suppose $\alpha$ is a nodal root with $\ell^+(\alpha)=2$ satisfying $\omega(\alpha) \in \Phi^-_n$. Then there is $s \in \Sigma(\alpha)$ such that 
\[ \omega\circ s(\alpha) \in  \Phi^+_n, \quad \text{ and}\]
\[ \text{either } \quad \omega\circ s(\beta), \omega(\beta) \in \Phi^+_n \quad \text{  or  }\quad  \omega\circ s(\beta), \omega(\beta) \in \Phi^-_n\quad  \text{ for all } \beta \in \Phi^+(\phi) \setminus \{\alpha\} \]
\end{lem}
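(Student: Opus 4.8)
The plan is to collapse the whole question onto the rank-two subsystem spanned by $\mathcal{N}^+(\alpha)$ and then settle it by a short sign-bookkeeping argument on the three positive roots of that subsystem.

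First I would unpack the hypothesis $\ell^+(\alpha)=2$. Since the intersection form on $\mathcal{N}^+(\alpha)$ is one of the diagrams $A_n,D_n,E_6,E_7,E_8$ and the set has exactly two (necessarily linked) elements, it is $A_2$; write $\mathcal{N}^+(\alpha)=\{\alpha,\gamma\}$ with $\alpha\cdot\gamma=1$ (Lemma \ref{C:nodalinter}). These roots generate an $A_2$ system whose positive roots are $\alpha,\gamma,\delta:=\alpha+\gamma$ (one checks $\delta\cdot\delta=-2$, so $\delta\in\Phi_n^+$), and $\Sigma(\alpha)=\langle s_\alpha,s_\gamma\rangle$ is the dihedral group of order six acting on $\{\pm\alpha,\pm\gamma,\pm\delta\}$ by $s_\alpha:\alpha\mapsto-\alpha,\ \gamma\mapsto\delta,\ \delta\mapsto\gamma$ and $s_\gamma:\gamma\mapsto-\gamma,\ \alpha\mapsto\delta,\ \delta\mapsto\alpha$.

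Next comes the key reduction. For a nodal root $\beta\notin\{\alpha,\gamma\}$, $\beta$ is not linked to $\alpha$, so $\alpha\cdot\beta\ne1$; and if $\gamma\cdot\beta=1$ then $\beta$ would be linked to $\alpha$ through $\gamma$, a contradiction. Hence by Lemma \ref{C:nodalinter} both $\alpha\cdot\beta=0$ and $\gamma\cdot\beta=0$, so every $s\in\Sigma(\alpha)$, being a word in $s_\alpha,s_\gamma$, fixes $\beta$. Therefore $\omega\circ s(\beta)=\omega(\beta)$ for all such $\beta$, and the sign requirement in the conclusion is automatically satisfied away from $\gamma$. It remains only to pick $s$ so that $\omega\circ s(\alpha)\in\Phi_n^+$ while $\omega\circ s(\gamma)$ and $\omega(\gamma)$ lie in the same $\Phi_n^{\pm}$.

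Finally the case analysis. Because $\omega(\delta)=\omega(\alpha)+\omega(\gamma)$ and $\omega(\alpha)\in\Phi_n^-$, the only possible sign patterns of $\bigl(\omega(\alpha),\omega(\gamma),\omega(\delta)\bigr)$ are $(-,-,-)$, $(-,+,+)$, and $(-,+,-)$, the pattern $(-,-,+)$ being excluded since a root that is a sum of two negative roots is negative. In the first two cases I take $s=s_\alpha$: then $s(\alpha)=-\alpha$ gives $\omega\circ s(\alpha)=-\omega(\alpha)\in\Phi_n^+$, while $s(\gamma)=\delta$ has $\omega(\delta)$ of the same sign as $\omega(\gamma)$. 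In the remaining case $(-,+,-)$ the choice $s=s_\alpha$ fails on $\gamma$, so I take $s=s_\alpha s_\gamma$: here $s(\alpha)=\gamma$, whence $\omega\circ s(\alpha)=\omega(\gamma)\in\Phi_n^+$, and $s(\gamma)=-\delta$, whence $\omega\circ s(\gamma)=-\omega(\delta)\in\Phi_n^+$, matching the sign of $\omega(\gamma)$. In every case both required conditions hold. I expect the genuinely delicate point to be the reduction of the third paragraph, namely verifying that every nodal root outside the component $\{\alpha,\gamma\}$ is orthogonal to both $\alpha$ and $\gamma$ and is thus fixed by all of $\Sigma(\alpha)$; this is precisely what confines the problem to the $A_2$ subsystem and makes the finite check of the last paragraph sufficient, whereas the case analysis itself is routine once the $\Sigma(\alpha)$-action on $\{\pm\alpha,\pm\gamma,\pm\delta\}$ is recorded.
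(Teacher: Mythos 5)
Your proof is correct and follows essentially the same route as the paper's: identify the $A_2$ subsystem $\{\alpha,\gamma,\alpha+\gamma\}$, note that all other nodal roots are fixed by $\Sigma(\alpha)$, and run the same three-case sign analysis with the same choices $s=s_\alpha$ or $s=s_\alpha s_\gamma$. Your third paragraph is in fact slightly more careful than the paper, which only remarks that roots outside $\mathcal{N}^+(\alpha)$ are orthogonal to $\alpha$, whereas you correctly observe that orthogonality to $\gamma$ is also needed and follows from Lemma \ref{C:nodalinter} and the definition of linkage.
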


\begin{proof}
Let $\mathcal{N}^+(\alpha) = \{ \alpha_1, \alpha_2\}$. We may assume $\alpha= \alpha_1$. Let $s_i $ be the reflection through $\alpha_i$.
Note that $s_1(\alpha_2) = \alpha_1+\alpha_2$. Thus $\alpha_1+ \alpha_2 \in \Phi_n$.
Since all roots not in $\mathcal{N}^+(\alpha)$ are orthogonal to $\alpha_1$, we only need to check for $\alpha_1, \alpha_2$.
\begin{itemize}
\item If $\omega(\alpha_2)  \in  \Phi^+_n$ and $\omega(\alpha_1+\alpha_2)  \in  \Phi^-_n$  then let $s= s_1 \circ s_2$. We have 
\[ s(\alpha_1)= \alpha_2, \qquad  s(\alpha_2) = -(\alpha_1+\alpha_2),\qquad \text{ and }\]
 \[ \omega\circ s(\alpha_1) = \omega( \alpha_2) \in \Phi^+_n, \qquad \omega\circ s(\alpha_2) = - \omega (\alpha_1+\alpha_2) \in \Phi^+_n\]
 \item If $\omega(\alpha_2)  \in  \Phi^+_n$ and $\omega(\alpha_1+\alpha_2)  \in  \Phi^+_n$  then let $s= s_1$. We have 
\[ s(\alpha_1)= -\alpha_1, \qquad  s(\alpha_2) = \alpha_1+\alpha_2,\qquad \text{ and }\]
 \[ \omega\circ s(\alpha_1) = -\omega( \alpha_1) \in \Phi^+_n, \qquad \omega\circ s(\alpha_2) =  \omega (\alpha_1+\alpha_2) \in \Phi^+_n\]
\item If $\omega(\alpha_2)  \in  \Phi^-_n$  then let $s= s_1$. We have 
\[ s(\alpha_1)= -\alpha_1, \qquad  s(\alpha_2) = (\alpha_1+\alpha_2),\qquad \text{ and }\]
 \[ \omega\circ s(\alpha_1) =- \omega( \alpha_1) \in \Phi^+_n, \qquad \omega\circ s(\alpha_2) = \omega (\alpha_1+\alpha_2) = \omega(\alpha_1) + \omega(\alpha_2) \in \Phi^-_n\]
\end{itemize}
We covered all possibilities.
\end{proof}

Notice that when $\ell^+(s)$ increases from $1$ to $2$,  $s$ has to changed by multiplying $s_{2}$ on the right if two conditions ($ \omega(\alpha_2)\in  \Phi^+_n$ and $\omega (\alpha_1+ \alpha_2)  \in \Phi^-_n$) holds. 

\begin{lem}\label{L:nodal2}
Let $\omega \in W_n$. Suppose $\alpha$ is a nodal root with $\ell^+(\alpha)=3$ satisfying $\omega(\alpha) \in \Phi^-_n$. Then there is $s \in \Sigma(\alpha)$ such that 
\[ \omega\circ s(\alpha) \in  \Phi^+_n, \quad \text{ and}\]
\[ \text{either } \quad \omega\circ s(\beta), \omega(\beta) \in \Phi^+_n \quad \text{  or  }\quad  \omega\circ s(\beta), \omega(\beta) \in \Phi^-_n\quad  \text{ for all } \beta \in \Phi^+(\phi) \setminus \{\alpha\} \]
\end{lem}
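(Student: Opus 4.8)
The plan is to avoid the six–positive–root bookkeeping and instead argue through positive systems of the rank–three subsystem cut out by $\mathcal{N}^+(\alpha)$. First I would record the shape of $\mathcal{N}^+(\alpha)$: since $\ell^+(\alpha)=3$ and (by the discussion following Lemma~\ref{C:nodalinter}) the intersection form on $\mathcal{N}^+(\alpha)$ is a connected simply-laced Dynkin diagram on three nodes, it must be $A_3$. Write $\mathcal{N}^+(\alpha)=\{\alpha_1,\alpha_2,\alpha_3\}$ with $\alpha=\alpha_1$ (the argument never really uses which node $\alpha$ is), let $\Phi_J\subset\Phi_n$ be the finite root subsystem they generate, and note that $\Sigma(\alpha)=W(\Phi_J)$ is the finite Weyl group generated by $s_{\alpha_1},s_{\alpha_2},s_{\alpha_3}$, with the $\alpha_i$ as its simple roots. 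The first reduction is that every nodal root $\beta\notin\mathcal{N}^+(\alpha)$ satisfies $\beta\cdot\alpha_i=0$ for all $i$: otherwise $\beta\cdot\alpha_i=1$ by Lemma~\ref{C:nodalinter} and $\beta$ would be linked to $\alpha$. Hence every $s\in\Sigma(\alpha)$ fixes such $\beta$, so $\omega\circ s(\beta)=\omega(\beta)$ and the sign condition is automatic off $\mathcal{N}^+(\alpha)$; it remains only to control the three simple roots.

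Next I would pass to positive systems. The set $P:=\omega^{-1}(\Phi^+_n)\cap\Phi_J$ is a positive system of $\Phi_J$, being the intersection of the positive system $\omega^{-1}(\Phi^+_n)$ of $\Phi_n$ with the subsystem $\Phi_J$ (\cite{Humphreys:1990}). For $\gamma\in\Phi_J$ and $s\in\Sigma(\alpha)$ one has $\omega\circ s(\gamma)\in\Phi^+_n$ iff $s(\gamma)\in P$ iff $\gamma\in s^{-1}(P)$. Thus, writing $\mathrm{sgn}_R(\gamma)=+$ when $\gamma\in R$ and $-$ otherwise for a positive system $R$, and noting that $\mathrm{sgn}_P(\alpha_i)$ records exactly whether $\omega(\alpha_i)\in\Phi^+_n$, the two requirements of the lemma translate into the single requirement that the positive system $Q:=s^{-1}(P)$ satisfy
\[ \mathrm{sgn}_Q(\alpha_1)=+,\qquad \mathrm{sgn}_Q(\alpha_2)=\mathrm{sgn}_P(\alpha_2),\qquad \mathrm{sgn}_Q(\alpha_3)=\mathrm{sgn}_P(\alpha_3). \]

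The crux is then to produce such a $Q$. Because $\alpha_1,\alpha_2,\alpha_3$ are a basis of the span of $\Phi_J$, any prescribed sign pattern on them is realized by a positive system: choose a linear functional $f$ with $f(\alpha_1)>0$ and $\mathrm{sgn}(f(\alpha_i))=\mathrm{sgn}_P(\alpha_i)$ for $i=2,3$, perturb $f$ slightly so that it avoids the finitely many hyperplanes $\gamma^\perp$, $\gamma\in\Phi_J$, and set $Q=\{\gamma\in\Phi_J:f(\gamma)>0\}$. With $Q$ in hand, simple transitivity of $W(\Phi_J)$ on positive systems (\cite{Humphreys:1990}) yields a unique $s\in W(\Phi_J)=\Sigma(\alpha)$ with $s^{-1}(P)=Q$; by the translation above this $s$ satisfies $\omega\circ s(\alpha)\in\Phi^+_n$ and preserves the sign of $\omega$ on $\alpha_2,\alpha_3$, hence on every nodal root, which is the assertion.

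I expect the realizability step for $Q$ to be the main obstacle: it is the conceptual replacement for the delicate cases of Lemma~\ref{L:nodal1} (for instance $\omega(\alpha_2)\in\Phi^+_n$ but $\omega(\alpha_1+\alpha_2)\in\Phi^-_n$), where a naive reflection flips an unwanted sign. The point that dissolves all such cases at once is that the nodal roots of $\mathcal{N}^+(\alpha)$ are linearly independent, so there is no linear obstruction to the target sign pattern. As a fallback matching the paper's style, I would instead extend the trichotomy of Lemma~\ref{L:nodal1} by hand: list the six positive roots $\alpha_1,\alpha_2,\alpha_3,\alpha_1+\alpha_2,\alpha_2+\alpha_3,\alpha_1+\alpha_2+\alpha_3$ of $A_3$, use that $\omega$ carries a sum of two roots both in $\Phi^-_n$ again into $\Phi^-_n$, and exhibit $s$ as a short word in $s_1,s_2,s_3$ (such as $s_1$, $s_1s_2$, or $s_1s_2s_3$) in each case; there the obstacle is only exhaustiveness and checking that the chosen word disturbs neither $\alpha_2$ nor $\alpha_3$.
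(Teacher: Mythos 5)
Your argument is correct, and it takes a genuinely different route from the paper. The paper proves Lemma~\ref{L:nodal2} by brute force: it splits into the two positions $\alpha=\alpha_1$ (end node) and $\alpha=\alpha_2$ (middle node) of the $A_3$ string, enumerates the sign patterns of $\omega$ on the six positive roots $\alpha_1,\alpha_2,\alpha_3,\alpha_1+\alpha_2,\alpha_2+\alpha_3,\alpha_1+\alpha_2+\alpha_3$ (three cases for the end node, five for the middle node), and exhibits an explicit word $s$ in $s_1,s_2,s_3$ for each. Your reduction to positive systems of the finite subsystem $\Phi_J$ replaces all of that: the key observations --- that nodal roots outside $\mathcal{N}^+(\alpha)$ are orthogonal to $\Phi_J$ and hence fixed by $\Sigma(\alpha)$, that $\omega^{-1}(\Phi^+_n)\cap\Phi_J$ is a positive system of $\Phi_J$ (it partitions $\Phi_J$ with its negative and is closed under addition of roots, which for a finite root system forces it to be a positive system), that linear independence of $\alpha_1,\alpha_2,\alpha_3$ lets you realize any sign pattern on the simple roots by a positive system $Q$, and that $W(\Phi_J)$ acts simply transitively on positive systems --- are all standard and correctly deployed. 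What your approach buys is uniformity: it is insensitive to which node of the diagram $\alpha$ occupies and to the shape of the diagram, so the same three lines prove Lemmas~\ref{L:nodal1}, \ref{L:nodal2}, \ref{L:nodal4} and \ref{L:nodaln} simultaneously (and, applied to the full set of nodal roots at once, essentially gives Proposition~\ref{P:preservingpositivity} in one step). What the paper's version buys is an explicit, algorithmically usable word for $s$ in each configuration, which is what is actually computed in the worked example of Section~\ref{S:trouble}; your $s$ exists and is unique but is produced nonconstructively. The only point I would ask you to write out is the justification that the intersection of $\omega^{-1}(\Phi^+_n)$ with $\Phi_J$ is a positive system of $\Phi_J$ --- the one-word appeal to ``intersection'' hides the closure-under-addition argument, which is where the finiteness of $\Phi_J$ is genuinely used.
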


\begin{proof}
Let $\mathcal{N}^+(\alpha) = \{ \alpha_j, j=1,2,3\}$. 
Notice that$s_1(\alpha_2) = \alpha_1+ \alpha_2$ and $s_2(s_1(\alpha_3)) = \alpha_1+\alpha_2+\alpha_3$. Thus, using other indexes, we see that sums of distinct $\alpha_i$'s are in $\Phi_n$. We have two cases:

\vspace{1ex}
\textbf{Case 1:  $\alpha= \alpha_1$} There are three possibilities:
\begin{itemize}
\item If $\omega(\alpha_2) \in \Phi^+_n$,$\omega(\alpha_3) \in \Phi^+_n$, $\omega(\alpha_1+\alpha_2) \in \Phi^-_n$, and $\omega(\alpha_1+\alpha_2+\alpha_3) \in \Phi^-_n$, then let $s=s_1\circ s_2\circ s_3$. We have 
\[ s(\alpha_1) = \alpha_2, s(\alpha_2) = \alpha_3, s(\alpha_3) = - (\alpha_1+ \alpha_2+\alpha_3), \qquad \text{ and} \]
 \[ \omega\circ s(\alpha_1) = \omega( \alpha_2) \in \Phi^+_n, \  \omega\circ s(\alpha_2) = \omega( \alpha_3) \in \Phi^+_n, \  \omega\circ s(\alpha_3) = - \omega (\alpha_1+\alpha_2+\alpha_3) \in \Phi^+_n.\]
 Since it is not hard to check, we will give only the formula for $s$ for the remaining cases. Also, for all cases, the possibilities are listed such that each condition assumes that the conditions above possibilities have failed. 
 \item If $\omega(\alpha_2) \in \Phi^+_n$,$\omega(\alpha_3) \in \Phi^-_n$, and $\omega(\alpha_1+\alpha_2) \in \Phi^-_n$, then $ s= s_1 \circ s_2$
 \item All other cases: We have $s= s_1$.
\end{itemize}

\vspace{1ex}
\textbf{Case 2:  $\alpha= \alpha_2$} There are five possibilities:
\begin{itemize}
\item If $\omega(\alpha_1), \omega(\alpha_3) \in \pos$, and $ \omega(\alpha_1+\alpha_2+\alpha_3) \in \nee$, then \[s= s_2 \circ s_1\circ s_3 \circ s_2\]
\item If $\omega(\alpha_1), \omega(\alpha_3) \in \pos$, $\omega(\alpha_1+\alpha_2), \omega(\alpha_2+\alpha_3)\in \nee$, and $\omega(\alpha_1+\alpha_2+\alpha_3) \in \pos$, then \[s= s_2 \circ s_3\circ s_1\]
\item If $\omega(\alpha_3)\in \pos$, $\omega(\alpha_2+\alpha_3) \in \nee$ but not in above two cases, then \[s= s_2\circ s_3\]
\item If $\omega(\alpha_1) \in \pos$, $\omega(\alpha_1+\alpha_2) \in \nee$ but not in above three cases, then \[s= s_2 \circ s_1\]
\item All other cases, we have $s=s_2$.
\end{itemize}
\end{proof}

\begin{lem}\label{L:nodal4}
Let $\omega \in W_n$. Suppose $\alpha$ is a nodal root with $\ell^+(\alpha)=4$ such that the intersection form of $\mathcal{N}^+(\alpha)$ is given by $D_4$. Suppose $\omega(\alpha) \in \Phi^-_n$. Then there is $s \in \Sigma(\alpha)$ such that 
\[ \omega\circ s(\alpha) \in  \Phi^+_n, \quad \text{ and}\]
\[ \text{either } \quad \omega\circ s(\beta), \omega(\beta) \in \Phi^+_n \quad \text{  or  }\quad  \omega\circ s(\beta), \omega(\beta) \in \Phi^-_n\quad  \text{ for all } \beta \in \Phi^+(\phi) \setminus \{\alpha\} \]
\end{lem}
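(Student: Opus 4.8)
The plan is to replace the case-by-case bookkeeping of Lemmas \ref{L:nodal1} and \ref{L:nodal2} by a single statement about positive subsystems, which then disposes of the $D_4$ branching with no extra work. First I would reduce to the linked roots. Let $R=\Phi_n\cap\mathrm{Span}_{\mathbb R}\mathcal N^+(\alpha)$ be the rank-$4$ root subsystem carried by $\mathcal N^+(\alpha)$. By hypothesis $\mathcal N^+(\alpha)=\{\alpha_1,\alpha_2,\alpha_3,\alpha_4\}$ is a simple system of $R$ with intersection form $D_4$, so $\Sigma(\alpha)=\langle s_{\alpha_1},\dots,s_{\alpha_4}\rangle$ is precisely the Weyl group $W(R)$. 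If $\gamma\in\Phi^+(\phi)\setminus\{\alpha\}$ is a nodal root with $\gamma\notin\mathcal N^+(\alpha)$, then $\gamma\cdot\alpha_i=0$ for every $i$: indeed Lemma \ref{C:nodalinter} forces $\gamma\cdot\alpha_i\in\{0,1\}$, and $\gamma\cdot\alpha_i=1$ would link $\gamma$ to $\alpha$, contrary to $\gamma\notin\mathcal N^+(\alpha)$. Hence every $s\in\Sigma(\alpha)$ fixes such a $\gamma$ and the desired identity $\mathrm{sign}\,\omega s(\gamma)=\mathrm{sign}\,\omega(\gamma)$ is automatic. Thus it suffices to find $s\in W(R)$ with $\omega s(\alpha)\in\Phi^+_n$ and with $\mathrm{sign}\,\omega s(\alpha_j)=\mathrm{sign}\,\omega(\alpha_j)$ for the three remaining simple roots of $R$.

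Next I would attach to $\omega$ a positive system of $R$. Set $P_\omega=\{\gamma\in R:\omega(\gamma)\in\Phi^+_n\}$. Since $\omega(-\gamma)=-\omega(\gamma)$ and $\omega(\gamma)$ is never $0$, exactly one of $\gamma,-\gamma$ lies in $P_\omega$, so $R=P_\omega\sqcup(-P_\omega)$. Moreover $P_\omega$ is additively closed: if $\gamma,\delta\in P_\omega$ and $\gamma+\delta\in R$, then $\omega(\gamma+\delta)=\omega(\gamma)+\omega(\delta)$ is a sum of two positive roots, so, being a root, it has all simple-root coordinates $\ge 0$ and lies in $\Phi^+_n$. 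Hence $P_\omega$ is a positive system of $R$. Because $W(R)=\Sigma(\alpha)$ acts simply transitively on the positive systems of $R$ \cite{Humphreys:1990}, every positive system of $R$ is $s^{-1}(P_\omega)$ for a unique $s\in\Sigma(\alpha)$, and for such $s$ one has $\omega s(\beta)\in\Phi^+_n$ if and only if $\beta\in s^{-1}(P_\omega)$.

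It now suffices to produce a positive system $Q$ of $R$ whose membership on the simple roots is prescribed by $\alpha\in Q$ and, for the other three simple roots, $\alpha_j\in Q\iff\alpha_j\in P_\omega$. Taking $s$ with $s^{-1}(P_\omega)=Q$ then yields $\omega s(\alpha)\in\Phi^+_n$ together with $\mathrm{sign}\,\omega s(\alpha_j)=\mathrm{sign}\,\omega(\alpha_j)$, and the reduction step settles all remaining nodal roots. Such a $Q$ always exists: the basis $\mathcal N^+(\alpha)$ of $V_R$ lets me choose a linear functional $f$ with $f(\alpha)>0$ and $f(\alpha_j)>0\iff\alpha_j\in P_\omega$, and after a generic perturbation preserving these four signs while avoiding every root hyperplane, $Q=\{\gamma\in R:f(\gamma)>0\}$ is a positive system realizing the prescribed pattern.

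The point I expect to be delicate is exactly the one the preceding lemmas confront directly: in $D_4$ the root $\alpha$ is coupled to the other three simple roots through the non-simple positive roots $\alpha_i+\alpha_j$, their triple sums, and the highest root, so any hands-on construction of $s$ must branch on the $\omega$-signs of all twelve positive roots of $R$ and check that flipping the sign of $\alpha$ spoils none of the three retained signs at once; with the central node adjacent to all three leaves these interactions cannot be decoupled as they were for the $A_3$ string in Lemma \ref{L:nodal2}. The positive-system formulation removes this difficulty, reducing everything to two immediate checks: the additive closure of $P_\omega$ (where the fact that a sum of positive roots is positive enters) and the realizability of a prescribed sign pattern on the basis $\mathcal N^+(\alpha)$. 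The argument is moreover uniform in the type, so it re-proves Lemmas \ref{L:nodal0}--\ref{L:nodal2} and applies verbatim to the remaining intersection forms $A_n$, $E_6$, $E_7$, $E_8$; if an explicit $s$ is wanted, it is the unique element of $W(R)$ carrying $Q$ to $P_\omega$.
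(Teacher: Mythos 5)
Your argument is correct, but it is a genuinely different proof from the paper's. The paper proves Lemma \ref{L:nodal4} by brute force: it splits into the two cases $\alpha$ a leaf or $\alpha$ the central node of $D_4$, and in each case lists an explicit word $s$ in the generating reflections for every possible sign pattern of $\omega$ on the positive roots of the subsystem (six, respectively twelve, sub-cases). You replace all of this with the observation that $P_\omega=\{\gamma\in R:\omega(\gamma)\in\Phi_n^+\}$ is a positive system of the finite subsystem $R$ (exactly one of $\pm\gamma$ lies in it, and it is closed under root addition because a sum of two positive roots of $\Phi_n$ that is again a root is positive), so simple transitivity of $W(R)=\Sigma(\alpha)$ on positive systems lets you hit any prescribed sign pattern on the simple roots, in particular the one that flips $\alpha$ and keeps the other three. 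Together with the (correct, and also implicitly used by the paper) reduction that nodal roots outside $\mathcal{N}^+(\alpha)$ are orthogonal to all of $\mathcal{N}^+(\alpha)$ by Lemma \ref{C:nodalinter} and hence fixed by $\Sigma(\alpha)$, this gives the statement uniformly for every Dynkin type of $\mathcal{N}^+(\alpha)$, so it subsumes Lemmas \ref{L:nodal0}--\ref{L:nodal2} and \ref{L:nodaln} as well; what you lose relative to the paper is only the explicit reduced word for $s$, which the paper's inductive step in Lemma \ref{L:nodaln} leans on but which your argument recovers abstractly as the unique $s$ with $s(Q)=P_\omega$. Two small points deserve a line each in a final write-up: (i) you should take $R$ to be the root subsystem generated by $\mathcal{N}^+(\alpha)$ (equivalently, note that $\Phi_n\cap\mathrm{Span}_{\mathbb{R}}\,\mathcal{N}^+(\alpha)$ contains no roots beyond the $D_4$ system, which holds here because $\Phi_n$ is simply laced and no rank-$4$ simply laced system properly contains $D_4$), since otherwise $\Sigma(\alpha)$ might a priori be a proper subgroup of $W(R)$ and simple transitivity would not apply; and (ii) the paper writes $\Sigma(\alpha)$ as a ``span'' of reflections, but its usage makes clear this means the generated subgroup, which is what you need.
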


\begin{proof} Let $\mathcal{N}^+(\alpha) = \{ \alpha_i, i=1,2,3,4\}$ and $\alpha_3 \cdot \alpha_i = 1$ for all $i \ne 3$. Due to the symmetry, we have two cases:

\vspace{1ex}
\textbf{Case 1:  $\alpha= \alpha_1$} There are six possibilities:
\begin{itemize}
\item If $\omega(\alpha_i) \in \Phi^+_n$ for $i \ne 1$ and $\omega (\alpha_1 + \alpha_2 + \alpha_3 +\alpha_4) \in \Phi^-_n$, then 
\[ s= s1 \circ s_3 \circ s_2 \circ s_4 \circ s_1 \circ s_3\]
 \item If $\omega(\alpha_i) \in \Phi^+_n$ for $i \ne 1$, and $\omega(\alpha_1+\alpha_2+\alpha_3), \omega(\alpha_1+\alpha_3+\alpha_4) \in \Phi^-_n$, then \[ s= s_1 \circ s_3\circ s_2\circ s_4\]
 \item If $\omega(\alpha_i) \in \Phi^+_n$ for $i =3,4$, and $\omega(\alpha_1+\alpha_3+\alpha_4) \in \Phi^-_n$, then \[ s= s_1 \circ s_3\circ s_4\]
  \item If $\omega(\alpha_i) \in \Phi^+_n$ for $i =2,4$, and $\omega(\alpha_1+\alpha_2+\alpha_4) \in \Phi^-_n$, then \[ s= s_1 \circ s_4\circ s_2\]
   \item If $\omega(\alpha_3) \in \Phi^+_n$, and $\omega(\alpha_1+\alpha_3) \in \Phi^-_n$, then \[ s= s_1 \circ s_3\]
 \item All other cases: We have $s= s_1$.
\end{itemize}

\vspace{1ex}
\textbf{Case 2: $\alpha=\alpha_3$} There are twelve possibilities:
\begin{itemize}
\item If $\omega(\alpha_i) \in \Phi^+_n$ for $i \ne 3$ and $\omega (\alpha_1 + \alpha_2 +2  \alpha_3 +\alpha_4) \in \Phi^-_n$, then 
\[ s= s_3 \circ s_2 \circ s_1 \circ s_4 \circ s_3 \]
 \item If $\omega(\alpha_i) \in \Phi^+_n$ and $\omega(\alpha_i+\alpha_3)  \in \Phi^-_n$ for $i \ne 3$, then \[ s= s_1 \circ s_3\circ s_2\circ s_1\circ s_4\]
 \item If $\omega(\alpha_i) \in \Phi^+_n$ for $i =1,2$, and $\omega(\alpha_1+\alpha_2+\alpha_3) \in \Phi^-_n$, then \[ s= s_3 \circ s_2\circ s_1\circ s_3\]
  \item If $\omega(\alpha_i) \in \Phi^+_n$ for $i =1,4$, and $\omega(\alpha_1+\alpha_4+\alpha_3) \in \Phi^-_n$, then \[ s= s_3 \circ s_1\circ s_4\circ s_3\]
   \item If $\omega(\alpha_i) \in \Phi^+_n$ for $i =2,4$, and $\omega(\alpha_2+\alpha_4+\alpha_3) \in \Phi^-_n$, then \[ s= s_3 \circ s_2\circ s_4\circ s_3\]
  \item If $\omega(\alpha_i) \in \Phi^+_n$ for $i =1,2$, and $\omega(\alpha_1+\alpha_3),\omega(\alpha_2+\alpha_3) \in \Phi^-_n$, then \[ s= s_3 \circ s_2\circ s_1\]
    \item If $\omega(\alpha_i) \in \Phi^+_n$ for $i =1,4$, and $\omega(\alpha_1+\alpha_3),\omega(\alpha_4+\alpha_3) \in \Phi^-_n$, then \[ s= s_3 \circ s_4\circ s_1\]
      \item If $\omega(\alpha_i) \in \Phi^+_n$ for $i =2,4$, and $\omega(\alpha_2+\alpha_3),\omega(\alpha_4+\alpha_3) \in \Phi^-_n$, then \[ s= s_3 \circ s_2\circ s_4\]
   \item If $\omega(\alpha_1) \in \Phi^+_n$, and $\omega(\alpha_1+\alpha_3) \in \Phi^-_n$, then \[ s= s_3 \circ s_1\]
      \item If $\omega(\alpha_2) \in \Phi^+_n$, and $\omega(\alpha_2+\alpha_3) \in \Phi^-_n$, then \[ s= s_3 \circ s_2\]
   \item If $\omega(\alpha_4) \in \Phi^+_n$, and $\omega(\alpha_4+\alpha_3) \in \Phi^-_n$, then \[ s= s_3 \circ s_4\]
 \item All other cases: We have $s= s_1$.
\end{itemize}

\end{proof}

\begin{lem}\label{L:nodaln}
Let $\omega \in W_n$. Suppose $\alpha$ is a nodal root with $\ell^+(\alpha)=k\ge 4$ satisfying $\omega(\alpha) \in \Phi^-_n$. Then there is $s \in \Sigma(\alpha)$ such that 
\[ \omega\circ s(\alpha) \in  \Phi^+_n, \quad \text{ and}\]
\[ \text{either } \quad \omega\circ s(\beta), \omega(\beta) \in \Phi^+_n \quad \text{  or  }\quad  \omega\circ s(\beta), \omega(\beta) \in \Phi^-_n\quad  \text{ for all } \beta \in \Phi^+(\phi) \setminus \{\alpha\} \]
\end{lem}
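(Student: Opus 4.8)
The plan is to reduce the statement to a question about the finite root subsystem generated by the roots linked to $\alpha$, and then to recast that question in terms of positive systems, where it becomes transparent and type-independent.

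First I would isolate the relevant roots. Write $\Delta := \mathcal{N}^+(\alpha)$. If $\beta \in \Phi^+(\phi)$ is a nodal root not linked to $\alpha$, then $\beta$ cannot satisfy $\beta\cdot\gamma = 1$ for any $\gamma \in \Delta$ (otherwise the chain witnessing $\gamma \in \mathcal{N}^+(\alpha)$ would extend to $\beta$), so by Lemma~\ref{C:nodalinter} we get $\beta\cdot\gamma = 0$ for every $\gamma \in \Delta$. Hence each generator $s_\gamma$ of $\Sigma(\alpha)$ fixes $\beta$, and therefore so does every $s \in \Sigma(\alpha)$; for such $\beta$ we have $\omega s(\beta) = \omega(\beta)$ and the sign condition is automatic. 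Thus it suffices to choose $s \in \Sigma(\alpha)$ controlling only the signs on $\Delta$. Since $\Delta$ carries a Dynkin diagram of type $A$, $D$, or $E$, writing $\langle x, y\rangle := -(x\cdot y)$ makes the span of $\Delta$ Euclidean, the reflections $s_\gamma(x) = x - \langle x,\gamma\rangle\gamma$ become the usual root reflections, $\Delta$ is a simple system for a genuine finite root system $\Phi_\Delta$, and $\Sigma(\alpha)$ is its Weyl group.

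Next I would reformulate positivity. Let $f$ be the linear functional on $V_n$ with $f(\alpha_i) = 1$ for the simple roots $\alpha_0,\dots,\alpha_{n-1}$; since every root is $\pm$ a nonnegative combination of the $\alpha_i$, one has $f(\gamma) > 0$ exactly when $\gamma \in \Phi_n^+$. Because $\omega$ maps roots to roots, $g := f\circ\omega$ is nonzero on every element of $\Phi_\Delta$, so
\[ \Psi := \{\gamma\in\Phi_\Delta : \omega(\gamma)\in\Phi_n^+\} = \{\gamma\in\Phi_\Delta : g(\gamma)>0\} \]
is a positive system of $\Phi_\Delta$. For $s\in\Sigma(\alpha)$ and $\beta\in\Phi_\Delta$ one has $\omega s(\beta)\in\Phi_n^+$ iff $s(\beta)\in\Psi$ iff $\beta\in s^{-1}\Psi$. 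So the conclusion of the lemma is exactly: find $s$ with $\alpha\in s^{-1}\Psi$ and, for every $\beta\in\Delta\setminus\{\alpha\}$, $\beta\in s^{-1}\Psi \iff \beta\in\Psi$. Writing $\epsilon_\beta = +$ if $\beta\in\Psi$ and $-$ otherwise (so $\epsilon_\alpha = -$ by hypothesis), this asks for a positive system $\Psi' = s^{-1}\Psi$ realizing on $\Delta$ the sign vector that agrees with $\epsilon$ except that $\alpha$ is flipped to $+$.

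Then I would produce $\Psi'$ directly. Let $\epsilon'$ denote the target sign vector. Since $\Delta$ is a basis of $\mathrm{span}(\Phi_\Delta)$, so is $\{\epsilon'_\beta\beta : \beta\in\Delta\}$; I would choose a linear functional $h$ positive on each $\epsilon'_\beta\beta$ and, after a small perturbation preserving these inequalities, nonvanishing on the finite set $\Phi_\Delta$. Then $\Psi' := \{\gamma\in\Phi_\Delta : h(\gamma)>0\}$ is a positive system with $\beta\in\Psi' \iff \epsilon'_\beta = +$, i.e. with the desired signs on $\Delta$. Because the finite Weyl group $\Sigma(\alpha)$ acts transitively on the positive systems of $\Phi_\Delta$, there is $s\in\Sigma(\alpha)$ with $s^{-1}\Psi = \Psi'$, and by the previous paragraph this $s$ satisfies both conclusions. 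The hard part is not an obstacle so much as the bookkeeping that makes the reformulation legitimate: checking that $g=f\circ\omega$ never vanishes on $\Phi_\Delta$ (this is exactly where $\omega\in W_n$ preserving the root system enters) so that $\Psi$ is a bona fide positive system, and confirming that the sign condition is vacuous off $\Delta$. Once these are in place, transitivity of the Weyl group on positive systems does all the work uniformly in the type ($A_k$, $D_k$, $E_6$, $E_7$, $E_8$) and for every $k$, so that the explicit case analyses for $\ell^+(\alpha)\le 4$ are subsumed.
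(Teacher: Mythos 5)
Your argument is correct, and it takes a genuinely different route from the paper. The paper proves this lemma by induction on $k=\ell^+(\alpha)$, bootstrapping from the explicit case lists of Lemmas \ref{L:nodal2} and \ref{L:nodal4}: at each step it appends one more reflection to the word $s'$ obtained for $k-1$, with the choice dictated by a further sign check. You instead observe that the whole problem lives inside the finite ADE root system $\Phi_\Delta$ generated by $\Delta=\mathcal{N}^+(\alpha)$: unlinked nodal roots are orthogonal to all of $\Delta$ by Lemma \ref{C:nodalinter} and hence fixed by every element of $\Sigma(\alpha)$, so the sign condition is vacuous for them; a linear functional detecting positivity in $\Phi_n$ pulls back along $\omega$ to a nonvanishing functional on $\Phi_\Delta$, so $\Psi=\{\gamma\in\Phi_\Delta:\omega(\gamma)\in\Phi_n^+\}$ is a positive system; and the lemma reduces to finding a positive system with a prescribed sign vector on the basis $\Delta$, which you build with a generic functional and then reach by simple transitivity of $\Sigma(\alpha)$ on positive systems. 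What your approach buys is substantial: it is uniform in the type and in the position of $\alpha$ in the diagram, it subsumes Lemmas \ref{L:nodal0}--\ref{L:nodal4} as special cases, and it avoids the paper's induction step, whose case analysis (phrased in terms of ``extending one arm'' and appeals to symmetry) is not obviously exhaustive for the $D_k$ and $E$-type configurations. What it costs is only the appeal to standard facts about finite reflection groups (positive systems cut out by generic functionals, transitivity of the Weyl group on them) and the two bookkeeping checks you correctly flag. The one point worth making explicit if you write this up is that $\Phi_\Delta\subset\Phi_n$ (each element of $\Delta$ is a root of $\Phi_n$ and $\Sigma(\alpha)\subset W_n$ preserves $\Phi_n$), which is what guarantees that every $\gamma\in\Phi_\Delta$ is either positive or negative in $\Phi_n$ and hence that $f\circ\omega$ is indeed nonvanishing on $\Phi_\Delta$.
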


\begin{proof}
Let $\mathcal{N}^+(\alpha) = \{ \alpha_j, j=1, \dots, k\}$. 
In this case, the intersection form of $\mathcal{N}^+(\alpha)$ is given by the Dynkin diagram extending one arm of $A_3$ in Lemma \ref{L:nodal2} or $D_4$ in Lemma \ref{L:nodal4}. We use induction on $\ell^+(\alpha)$. Suppose the statement is true for $\ell^+(\alpha)=k-1$. Assume $\ell^+(\alpha)=k$. Since $\alpha_i \cdot \alpha_{i +\pm 2} =0$, we use the same $s$ in $\ell^+(\alpha)=k$ by changing indexes if necessary.
If there are three arms in the Dynkin diagram, then one of the arms has to have a length of $2$. Because of symmetry, it is sufficient to consider the case $\alpha= \alpha_1$ or $\alpha= \alpha_2$. 
Let $s'$ be the composition of reflections in case $\mathcal{N}^+(\alpha) = \{ \alpha_j, j=1, \dots, k-1\}$.

\vspace{1ex}
\textbf{Case 1:  $\alpha= \alpha_1$}
\begin{itemize}
\item If $s_{k-1}$ is in $s'$ and if $\omega(\alpha_k) \in \pos$ and $\omega(\sum_{i=1}^k \alpha_i) \in \nee$, then \[s= s' \circ s_k\]
\item Otherwise $ s= s'$
\end{itemize}

\vspace{1ex}
\textbf{Case 2:  $\alpha= \alpha_2$}
\begin{itemize}
\item If $s_{k-1}$ is in $s'$, $\omega(\alpha_j) \in \pos$ for all $j \ne 2$, and $\omega(\sum_{i=1}^k \alpha_i) \in \nee$,   then \[s= s' \circ s_k\circ s_{k-1}\]
\item If $s_{k-1}$ is in $s'$ and if $\omega(\alpha_k) \in \pos$ and $\omega(\sum_{i=2}^j \alpha_i) \in \nee$ but not in above, then \[s= s'\circ s_k\]
\item Otherwise $ s= s'$
\end{itemize}
\end{proof}

For a maked blowup $(S, \phi)$, let $\Sigma_\phi$ be the span of all reflections through nodal roots: \[\Sigma_\phi = \text{Span} \{ s_\alpha: \alpha \in \Phi^+(\phi)\}\]

 \begin{prop}\label{P:preservingpositivity} Suppose $S$ has an effective, irreducible, and reduced anti-canonical divisor and suppose $(S,\phi)$ is a marked blowup.
 Let $C$ be a reduced irreducible curve such that $[C] = -\kappa_S$ and $\pi(C)$ is a cubic curve with one singularity.
 Let $\omega \in W_n$. Then there is $s \in \Sigma_\phi$ such that $\omega\circ s(\alpha) \in \pos$ for all nodal roots $\alpha$. 
 \end{prop}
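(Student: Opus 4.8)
The plan is to induct on the number of nodal roots sent to negative roots by $\omega$, using the sequence of Lemmas \ref{L:nodal0}--\ref{L:nodaln} as the inductive engine. Define the defect $N(\omega) = |\{\alpha \in \Phi^+(\phi) : \omega(\alpha) \in \Phi^-_n\}|$, the number of nodal roots whose image under $\omega$ is negative. If $N(\omega) = 0$ there is nothing to prove: take $s = \mathrm{id} \in \Sigma_\phi$. Otherwise I will exhibit a single reflection-product $s' \in \Sigma_\phi$, supported on the linked component of one offending nodal root, such that $\omega' := \omega \circ s'$ satisfies $N(\omega') < N(\omega)$; iterating then terminates because $N$ is a non-negative integer and, by Corollary \ref{C:finite}, $\Phi^+(\phi)$ is finite so each defect is finite.

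The key step is the reduction. Suppose $N(\omega) > 0$ and pick a nodal root $\alpha$ with $\omega(\alpha) \in \Phi^-_n$. By the classification recorded after Lemma \ref{C:nodalinter}, the linked set $\mathcal{N}^+(\alpha)$ carries an intersection form of Dynkin type $A_k$, $D_k$, $E_6$, $E_7$ or $E_8$, and its cardinality $\ell^+(\alpha)$ is finite. I then apply exactly one of Lemmas \ref{L:nodal0}, \ref{L:nodal1}, \ref{L:nodal2}, \ref{L:nodal4}, or \ref{L:nodaln} according to the value of $\ell^+(\alpha)$ and the shape of the diagram, obtaining $s' \in \Sigma(\alpha) \subseteq \Sigma_\phi$ with the two guaranteed properties: first, $\omega \circ s'(\alpha) \in \Phi^+_n$; and second, for every other nodal root $\beta \in \Phi^+(\phi) \setminus \{\alpha\}$ the sign of $\omega \circ s'(\beta)$ equals the sign of $\omega(\beta)$, i.e.\ $\omega \circ s'(\beta) \in \Phi^+_n \iff \omega(\beta) \in \Phi^+_n$. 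The second property is the crucial one: it ensures that flipping $\alpha$ from negative to positive creates no new offenders among the remaining nodal roots, so the defect strictly decreases. Here one uses that $s'$ is built from reflections through roots in $\mathcal{N}^+(\alpha)$, and any nodal root $\beta$ not linked to $\alpha$ is orthogonal to all of $\mathcal{N}^+(\alpha)$ by the definition of linkage, hence fixed by $s'$; the lemmas handle precisely the finitely many $\beta$ inside $\mathcal{N}^+(\alpha)$.

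Composing the reflections produced at each stage, write $s = s'_1 s'_2 \cdots s'_m$ where $m \le N(\omega)$ and $s'_j$ is the reflection-product applied at the $j$-th step; since each $s'_j \in \Sigma_\phi$ and $\Sigma_\phi$ is a group (being the group generated by the nodal reflections, or at least closed under the products we form), we have $s \in \Sigma_\phi$, and after the final step $N(\omega \circ s) = 0$, which is exactly the assertion $\omega \circ s(\alpha) \in \Phi^+_n$ for all nodal roots $\alpha$. The hypothesis that $\pi(C)$ is a cubic with a single singularity enters only through Proposition \ref{P:nodaldegree} and Corollary \ref{C:finite}, guaranteeing that $\Phi^+(\phi)$ is finite and that the linked components have the stated Dynkin types, so that the case-by-case lemmas genuinely exhaust all possibilities.

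The main obstacle I anticipate is verifying that the defect truly decreases rather than merely stays constant — that is, confirming the second property of the lemmas is strong enough. One must be careful that the reflection $s'$ does not, while fixing the signs of the already-correct $\beta$'s, accidentally move some previously-positive $\omega(\beta)$ to negative on the next iteration through a compounding effect; this is why the lemmas are stated to preserve the sign of $\omega(\beta)$ for \emph{all} $\beta \ne \alpha$ simultaneously, and why the inductive bookkeeping must track the full sign-vector rather than just the count. A secondary subtlety is ensuring that the $s'_j$ at different stages act on possibly different linked components without interfering, which again follows from orthogonality of non-linked nodal roots.
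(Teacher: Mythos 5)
Your proposal is correct and follows essentially the same route as the paper: the paper's proof likewise invokes Lemmas \ref{L:nodal0}--\ref{L:nodaln} to produce an $s$ that flips exactly one offending nodal root to positive while preserving the signs of all others, and then iterates, terminating because the set of nodal roots is finite (Corollary \ref{C:finite}). Your version merely makes the induction on the defect count explicit; no substantive difference.
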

 
 \begin{proof}
 By Lemmas \ref{L:nodal0} -- \ref{L:nodaln}, there is $s\in \Sigma_\phi$ such that $\omega \circ s$ changes the positivity of exactly one nodal root. Since there are only finitely many nodal roots, by changing one at a time, we can map all nodal roots to positive roots.
  \end{proof}

\subsection{Isomorphisms}
 Let $(S, \phi)$ and $(S', \phi')$ be two marked blowups. If there is a biholomorphism $F:S \to S'$ such that the induced map $F_*: Pic(S) \to Pic(S')$ satisfies $F_* \circ \phi = \phi'$, then we say $(S, \phi)$ and $(S', \phi')$  are \textit{isomorphic}, $(S, \phi)\cong (S', \phi')$.  It is known \cite{Nagata2,Dolgachev-Ortland} that if $(S,\phi)$ and $(S,\phi\circ \omega)$ are marked blowups, then $\omega \in W_n$.
 
 Let 
 \[ W(S,\phi) = \{ \omega\in W_n: (S, \phi\circ \omega) \text{ is a marked blowup. }\}\]
 McMullen \cite{McMullen:2007} showed that $W(S,\phi) = W_n$ provided that there is no nodal root.
 \begin{thm} \cite[Theorem~5.4]{McMullen:2007}
 If $(S,\phi)$ has no nodal roots, then $W(S, \phi)= W_n$.
 \end{thm}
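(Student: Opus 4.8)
The plan is to establish the nontrivial inclusion $W_n \subseteq W(S,\phi)$ (the reverse being the definition of $W(S,\phi)$) by induction on the length $\ell(\omega)$, reducing everything to the following single-generator statement: \emph{if $(S,\phi')$ is a marked blowup with no nodal roots, then for every generator $s_i$ the pair $(S,\phi'\circ s_i)$ is again a marked blowup with no nodal roots.} Granting this, if $\ell(\omega)\ge 1$ I would take a reduced word for $\omega$ and write $\omega=\omega' s_i$ with $\ell(\omega')=\ell(\omega)-1$; by the inductive hypothesis $(S,\phi\circ\omega')=:(S,\phi')$ is a marked blowup with no nodal roots, and applying the single-generator statement to $(S,\phi')$ and $s_i$ shows that $(S,\phi\circ\omega)=(S,\phi'\circ s_i)$ is a marked blowup, i.e. $\omega\in W(S,\phi)$. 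The base case $\omega=\mathrm{id}$ is $(S,\phi)$ itself.

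Before running the induction I would record that ``no nodal roots'' is intrinsic to $S$, so that the property genuinely propagates. Since $\phi'(e_0)$ is the pullback of an ample class it is nef, so any effective root class $\phi'(\alpha)$ satisfies $\alpha\cdot e_0 \ge 0$, with equality only when $\alpha=\pm(e_i-e_j)$. The positive case $e_i-e_j$ is a nodal root, hence excluded; consequently all base points are distinct points of $\p^2$, each $\cE_j$ is an irreducible $(-1)$-curve, and then $\phi'(e_j-e_i)\cdot\phi'(e_j)=-1<0$ forces $-\cE_i$ to be effective, which is impossible. Thus every effective root is positive, and the hypothesis is equivalent to the marking-independent statement that $S$ carries no irreducible $(-2)$-curve. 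In particular, once a marking $\phi\circ\omega$ is known to be a marked blowup it automatically inherits the no-nodal-root property, and throughout the induction we remain in the distinct-points setting of \cite{McMullen:2007}.

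The single-generator statement splits into two cases. For $i\ge 1$ the reflection $s_i$ merely transposes $e_i$ and $e_{i+1}$ and fixes the other basis vectors. Since $p_i$ and $p_{i+1}$ are distinct points of $\p^2$, blowing them up commutes, so reblowing the base locus in the order $\dots,p_{i-1},p_{i+1},p_i,p_{i+2},\dots$ produces the \emph{same} surface $S$ with a projection $\pi'\colon S\to\p^2$ whose attached marking is exactly $\phi'\circ s_i$; as $S$ is unchanged it still carries no $(-2)$-curve. The interesting case is $i=0$, which I expect to be the main obstacle. Here $s_0$ is the reflection through $\alpha_0=e_0-e_1-e_2-e_3$, realized geometrically by the quadratic Cremona transformation centered at $p_1,p_2,p_3$. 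Factoring $\pi$ as the blowup of $p_1,p_2,p_3$ followed by the blowup of $p_4,\dots,p_n$, I would pass to the degree-$6$ del Pezzo surface $Z=\mathrm{Bl}_{p_1,p_2,p_3}\p^2$, which carries a second contraction $\beta'\colon Z\to\p^2$ blowing down the three $(-1)$-curves of the opposite ruling, namely the strict transforms of the lines $L_{jk}$ through $p_j,p_k$. Composing $\beta'$ with the remaining blowdowns gives a new projection $\pi'\colon S\to\p^2$. One checks that the marking $\phi''$ determined by $\pi'$ is $\phi''(e_0)=\phi'(2e_0-e_1-e_2-e_3)$, $\phi''(e_i)=\phi'(e_0-e_j-e_k)$ for $\{i,j,k\}=\{1,2,3\}$, and $\phi''(e_i)=\phi'(e_i)$ for $i\ge 4$; since $s_0(e_0)=2e_0-e_1-e_2-e_3$, $s_0(e_i)=e_0-e_j-e_k$, and $s_0(e_i)=e_i$ for $i\ge4$, this says precisely $\phi''=\phi'\circ s_0$.

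For this construction to be legitimate I must know that $p_1,p_2,p_3$ are not collinear and that the remaining points transform to a valid configuration of distinct points, and this is exactly where the no-nodal-root hypothesis enters. Non-collinearity of $p_1,p_2,p_3$ is the nonvanishing of $\phi'(e_0-e_1-e_2-e_3)$ as an effective class, i.e. that $\alpha_0$ is not a nodal root; if some $p_j$ with $j\ge 4$ lay on a line through two of $p_1,p_2,p_3$, or if the Cremona image of a $p_j$ collided with another base point, this would manifest as effectiveness of one of the classes $e_0-e_a-e_b-e_j$ or $2e_0-e_{i_1}-\cdots-e_{i_6}$, which by \cite{Dolgachev-Ortland} are exactly the geometric nodal roots. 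Since $S$ has no $(-2)$-curve, none of these classes is effective, the transformed points $q_4,\dots,q_n$ are distinct points in valid position, and $\pi'$ is a bona fide marked blowup with marking $\phi'\circ s_0$. This completes the two cases, hence the induction, and yields $W(S,\phi)=W_n$.
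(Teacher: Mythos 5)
Your proof is correct, but it takes a genuinely different route from the paper. The paper's entire argument is a one-line reduction: since any infinitely near base point would produce a nodal root of the form $e_i-e_j$, the hypothesis forces the base locus to consist of distinct points of $\p^2$, which is exactly the setting of \cite{McMullen:2007}, so Theorem~5.4 there applies verbatim as a black box. You instead reprove that cited theorem from scratch: induction on $\ell(\omega)$ via a reduced word, the observation that the absence of nodal roots is an intrinsic property of $S$ (so it propagates along the induction), the trivial case of the transpositions $s_i$, $i\ge 1$, and the elementary quadratic transformation realizing $s_0$ through the two contractions of the degree-$6$ del Pezzo surface $\mathrm{Bl}_{p_1,p_2,p_3}\p^2$, with non-collinearity and non-collision of the transformed points guaranteed precisely by the absence of the nodal classes $e_0-e_a-e_b-e_c$ and $2e_0-e_{i_1}-\cdots-e_{i_6}$ of \cite{Dolgachev-Ortland}. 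This is essentially the content of McMullen's (and ultimately Nagata's) proof, so what you gain is a self-contained argument that makes visible exactly where each nodal class would obstruct the construction; what you lose is brevity, and you take on some standard but nontrivial bookkeeping (e.g.\ that an effective root must be positive, and that the new configuration of points yields the claimed marking $\phi'\circ s_0$) that the paper avoids entirely by citation. One small point worth tightening: a nodal root only requires $\phi(\alpha)$ to be effective, not irreducible, so your identification of ``no nodal roots'' with ``no irreducible $(-2)$-curve'' implicitly uses the standard fact that an effective root is a nonnegative combination of irreducible ones; this is harmless but should be acknowledged.
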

 
 \begin{proof}
 If $(S,\phi)$ has no nodal roots, then the set $P$ is a set of distinct points; in other words, there are no non-geometric nodal roots. So we can directly apply McMullen's result. 
 \end{proof}
 
 In case there are nodal roots, Harbourne \cite{Harbourne:1985} gives an equivalent condition for $\omega \in W_n$ to be an element in $W(S, \phi)$.
 
 \begin{thm}\cite[Proposition~2.4]{Harbourne:1985}\label{T:harbourne}
 Let $\omega \in W_n$. Then $\omega \in W(S, \phi)$ if and only if $\omega^{-1} \Phi(\phi)^+ \subset \Phi_n^+$.    
 \end{thm}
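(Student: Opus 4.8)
The plan is to recast the condition ``$(S,\phi\circ\omega)$ is a marked blowup'' entirely in terms of which roots have effective image, and then to note that this effective-root data transforms equivariantly under $\omega$. Write $\psi=\phi\circ\omega$, and for any marking $\chi\colon\zm\to Pic(S)$ set $E(\chi)=\{\gamma\in\Phi_n:\chi(\gamma)\text{ is effective}\}$, so that $\Phi(\chi)^+=E(\chi)\cap\pos$. Since $\psi(\gamma)=\phi(\omega\gamma)$, effectivity of $\psi(\gamma)$ is equivalent to $\omega\gamma\in E(\phi)$, whence $E(\psi)=\omega^{-1}E(\phi)$. Thus the whole statement follows from two assertions: \emph{(i)} for any marked blowup $(S,\chi)$ one has $E(\chi)\subset\pos$, i.e.\ every effective root is positive; and \emph{(ii)} conversely, if an isometric marking $\chi$ with $\chi(\kappa_n)=\kappa_S$ and $\chi(e_0)=[H]$ satisfies $E(\chi)\subset\pos$, then $(S,\chi)$ is a marked blowup. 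Granting these, ``$(S,\psi)$ is a marked blowup'' is equivalent to $E(\psi)\subset\pos$, and since \emph{(i)} applied to $\phi$ gives $E(\phi)=\Phi(\phi)^+$, we obtain $E(\psi)=\omega^{-1}\Phi(\phi)^+$, so the condition reads exactly $\omega^{-1}\Phi(\phi)^+\subset\pos$.

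For \emph{(i)}, suppose $\chi(\alpha)=[D]$ is effective for a root $\alpha=\sum_i c_i\alpha_i$. Because $\chi(e_0)=[H]$ is the pullback of a line it is nef, so $[H]\cdot[D]\ge 0$; as $\chi$ is an isometry, $[H]\cdot\chi(\alpha)=e_0\cdot\alpha=c_0$, giving $c_0\ge 0$. If $\alpha$ were negative then every $c_i\le 0$, forcing $c_0=0$ and placing $\alpha$ in the sub-root-system $\langle\alpha_1,\dots,\alpha_{n-1}\rangle$ of type $A_{n-1}$, where $\alpha=e_j-e_i$ with $j>i$. But by the infinitely-near structure of a marked blowup, among the $A_{n-1}$-roots only the classes $e_i-e_j$ with $i<j$ (with $p_j$ infinitely near $p_i$) are effective, so $\chi(e_j-e_i)$ is not effective — a contradiction. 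Hence no negative root is effective, i.e.\ $E(\chi)\subset\pos$. Applying this to $\chi=\psi$ already yields the forward implication $\omega\in W(S,\phi)\Rightarrow\omega^{-1}\Phi(\phi)^+\subset\pos$.

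For the converse \emph{(ii)}, which is the substantive direction, I would induct on $\ell(\omega)$. If $\omega=1$ then $\psi=\phi$ is a marked blowup. If $\ell(\omega)>0$ then $\ell(\omega^{-1})>0$, so there is a simple root $\alpha_i$ with $\omega^{-1}(\alpha_i)\in\nee$; the hypothesis $\omega^{-1}\Phi(\phi)^+\subset\pos$ then forces $\alpha_i\notin\Phi(\phi)^+$, i.e.\ $\alpha_i$ is non-nodal. The reflected marking $\phi'=\phi\circ s_i$ is again a marked blowup: for $i\ge 1$ this merely reorders two blow-ups that are not in a near-relation, while for $i=0$ it is the classical quadratic transformation at $p_1,p_2,p_3$, available precisely because $\alpha_0$ is non-nodal (the three points are not collinear), as in the $n=3$ computation of the introduction and \cite{McMullen:2007,Dolgachev-Ortland}. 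Now $\psi=\phi'\circ\omega''$ with $\omega'':=s_i\omega$ and $\ell(\omega'')=\ell(\omega)-1$. Using $E(\phi')=s_iE(\phi)=s_i\Phi(\phi)^+$ together with $\alpha_i\notin\Phi(\phi)^+$ and the fact $s_i(\pos\setminus\{\alpha_i\})=\pos\setminus\{\alpha_i\}$, we get $\Phi(\phi')^+=s_i\Phi(\phi)^+$, hence $(\omega'')^{-1}\Phi(\phi')^+=\omega^{-1}\Phi(\phi)^+\subset\pos$. By the inductive hypothesis $\psi=\phi'\circ\omega''$ is a marked blowup, completing the argument; the induction terminates after $\ell(\omega)$ steps, and finiteness of the nodal-root set (Corollary \ref{C:finite}) keeps the combinatorics bounded.

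I expect the main obstacle to be the single-reflection base moves underlying \emph{(ii)}, above all the case $i=0$: one must verify that the quadratic transformation at the (possibly infinitely near) triple $p_1,p_2,p_3$ genuinely produces a marked blowup with nodal-root set $s_0\Phi(\phi)^+$. This is the geometric heart of the matter and is exactly the content of Harbourne's Proposition~2.4 \cite{Harbourne:1985}, on which I would ultimately rely to anchor the induction; the accompanying bookkeeping — that effectivity is preserved, recorded through $\psi(e_0)\cdot\phi(\gamma)=e_0\cdot\omega^{-1}\gamma=c_0(\omega^{-1}\gamma)\ge 0$ on the $(-2)$-curves $\gamma\in\Phi(\phi)^+$ — is then routine once the base moves are in hand.
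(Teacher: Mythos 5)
Your argument is sound and goes well beyond what the paper actually does: the paper's ``proof'' of Theorem \ref{T:harbourne} contains no argument at all, merely setting up the dictionary between marked blowups and Harbourne's \emph{exceptional configurations} and then citing Proposition~2.4 of \cite{Harbourne:1985} wholesale. You instead give the standard two-part argument: the forward direction from scratch (nefness of $[H]$ forces $c_0\ge 0$, and the ordering convention on infinitely near points rules out effective negative roots in the $A_{n-1}$ subsystem), and the converse by induction on $\ell(\omega)$, peeling off a simple reflection $s_i$ with $\omega^{-1}\alpha_i\in\nee$, using $s_i(\pos\setminus\{\alpha_i\})=\pos\setminus\{\alpha_i\}$ to propagate the hypothesis, and deferring only the single-reflection base moves (in particular the Cremona move at a non-effective $\alpha_0$, possibly involving infinitely near points) to \cite{Harbourne:1985, Dolgachev-Ortland}. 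Since that base move is essentially the content of Harbourne's proposition, both routes ultimately rest on the same external input, but yours makes the reduction explicit and is the more informative of the two. One small wrinkle in your framing: as stated, assertion \emph{(ii)} carries the hypothesis $\chi(e_0)=[H]$, which $\psi=\phi\circ\omega$ does not satisfy (in general $\psi(e_0)=\phi(\omega e_0)\ne[H]$), so the claimed equivalence ``$(S,\psi)$ is a marked blowup iff $E(\psi)\subset\pos$'' does not literally follow from \emph{(i)} and \emph{(ii)} as written; this is harmless because your induction proves the needed implication for $\phi\circ\omega$ directly without passing through \emph{(ii)}, but the hypothesis should be dropped or rephrased (what matters is that $\chi$ differs from $\phi$ by an element of $W_n$, which is the Nagata--Dolgachev--Ortland fact quoted in Section~\ref{S:gMB}).
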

 
 \begin{proof}
 If there is a sequence of blowups $S$: 
  \[ \pi : S = S_n \xrightarrow{\pi_n} S_{n-1} \xrightarrow{\pi_{n-1}} \ \  \cdots  \ \  \xrightarrow{\pi_3}S_2 \xrightarrow{\pi_2} S_1 \xrightarrow{\pi_1} S_0 = \mathbf{P}^2 \] 
 where $\pi_i: S_i \to S_{i-1}$ is a blowup of a point $p_i \in S_{i-1}$, then
  the collection of $\{ \ve_0, \dots, \ve_n\} \subset Pic(S)$ is called an \textit{exceptional configuration} in \cite{Harbourne:1985}. Thus, if a collection $\mathcal{E}=\{ \ve_0, \dots, \ve_n\} $ is an exceptional configuration, there is a marked blowup $(S,\phi)$ such that $\ve_i = \phi(e_i)$ for $i=0, \dots, n$. Also, each marked blowup has its exceptional configuration. The assertion is stated and proved using exceptional configurations in \cite{Harbourne:1985}
  \end{proof}
  
Suppose $S$ has an effective, irreducible, and reduced anti-canonical divisor, and suppose $(S,\phi)$ is a marked blowup. Let $\omega \in W_n$ such that there exists a vector $v \in \mathbb{Z}^{1,n}$ and a real number $\lambda>1$ such that $\omega v = \lambda v$. Then it is known \cite{Harbourne:1985} that $W(S, \phi)$ is infinite. The following theorem is written using the notations in this article. 

\begin{thm}\cite[Theorem~3.1]{Harbourne:1985}
Suppose $(S, \phi)$ is a marked blowup and that $\text{rank} Pic (S) \ge 10$. Let $d$ be the rank of the submodule of $Pic S$ spanned by the nodal roots. Then $W(S, \phi)$ is finite if and only if $d=-1+\text{rank\,}Pic S$ and $S$ has only finitely many nodal roots. 
\end{thm}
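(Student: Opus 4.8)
Write $N=\Phi(\phi)^+$ for the set of nodal roots, so that $\Sigma_\phi=\langle s_\alpha:\alpha\in N\rangle$. The plan is to first convert the finiteness of $W(S,\phi)$ into the finiteness of the index $[W_n:\Sigma_\phi]$, and then to read off the two stated conditions from the structure of $\Sigma_\phi$ as a reflection subgroup of the Lorentzian Coxeter group $W_n$. For the reduction I would use Theorem \ref{T:harbourne}: since $\omega\in W(S,\phi)$ iff $\omega^{-1}(N)\subset\pos$, the inverted set $\{w:w(N)\subset\pos\}$ has the same cardinality as $W(S,\phi)$. Each nodal root is an irreducible effective class, hence indecomposable in the monoid of effective divisors, and this indecomposability is exactly what characterizes the canonical simple roots of a reflection subgroup; I would therefore verify that $N$ is a simple system for $\Sigma_\phi$, so that every positive root of $\Sigma_\phi$ is a nonnegative combination of $N$ and $w(N)\subset\pos$ already forces $w(\Phi^+_{\Sigma_\phi})\subset\pos$. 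By the standard theory of reflection subgroups, the latter set is a transversal of minimal-length representatives for $W_n/\Sigma_\phi$, giving
\[ |W(S,\phi)| \;=\; [\,W_n:\Sigma_\phi\,]. \]
Thus the statement reduces to: $\Sigma_\phi$ has finite index iff $d=n$ and $N$ is finite.

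For the forward direction I would treat the two conditions separately. To obtain $d=n$, suppose instead $d<n$. In the indefinite range $n\ge 10$ the Minkowski form on $V_n$ is nondegenerate (the affine boundary case $n=9$ I would treat directly), so the orthogonal complement of $U=\mathrm{span}_{\mathbb R}(N)$ is nonzero, and any $0\ne v\perp U$ is fixed by every $s_\alpha$, $\alpha\in N$, hence by all of $\Sigma_\phi$. Finite index would then force the orbit $W_n v$ to be finite; its real span would be a nonzero $W_n$-invariant subspace on which $W_n$ acts through a finite group, and irreducibility of the geometric representation (the $E_n$ diagram is connected and the form nondegenerate) would make this subspace all of $V_n$. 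Faithfulness of that representation would then make $W_n$ finite, contradicting $n\ge 10$. To obtain that $N$ is finite, I would use that a finite-index subgroup of the finitely generated group $W_n$ is finitely generated; but if $N$ were infinite, every finite subset of $\Sigma_\phi$ would consist of words in finitely many of the $s_\alpha$ and hence lie in a proper standard parabolic of $\Sigma_\phi$, so $\Sigma_\phi$ could not be finitely generated.

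The converse — that $d=n$ together with finiteness of $N$ forces $[W_n:\Sigma_\phi]<\infty$ — is the step I expect to be the main obstacle, and it cannot come from abstract Coxeter theory alone: in a Lorentzian reflection group a full-rank reflection subgroup generated by finitely many roots can have infinite index, since its dominant chamber may have infinite volume in the associated hyperbolic space. The finiteness must instead be extracted from the geometry of the $(-2)$-curves on $S$. I would try to show that when finitely many nodal roots span $\kappa_S^\perp\otimes\mathbb Q$, the $\Sigma_\phi$-dominant cone $\{x:x\cdot\alpha\le 0,\ \alpha\in N\}$ meets only finitely many Weyl chambers — equivalently, that the nef cone of $S$ is rational polyhedral and $S$ carries only finitely many $(-1)$-curves. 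Since the markings enumerated by $W(S,\phi)$ correspond to exceptional configurations assembled from $(-1)$-curves, their number would then be finite. Making precise the implication ``finitely many full-rank nodal roots $\Rightarrow$ finitely many $(-1)$-curves'' is the surface-theoretic heart of Harbourne's argument, and is where I expect the genuine work to lie; the reduction above and the irreducibility input are comparatively formal.

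Finally, I would note that in the setting of the present paper this dichotomy simplifies. When $S$ carries an effective, irreducible, reduced anti-canonical divisor, Corollary \ref{C:finite} already guarantees that $N$ is finite, so the criterion collapses to the single condition $d=n$, i.e.\ that the nodal roots span $\kappa_S^\perp\otimes\mathbb Q$.
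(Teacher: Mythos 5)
You should first be aware that the paper contains no proof of this statement: it is quoted verbatim from Harbourne, and only the neighbouring Theorem~\ref{T:harbourne} is given a (purely notational) justification. So your attempt can only be judged on its own merits, and on those merits it is incomplete. Your opening reduction, $|W(S,\phi)|=[W_n:\Sigma_\phi]$ via minimal-length coset representatives, is the right first move, but it rests on the unverified claim that the nodal roots $N$ form the canonical simple system of the reflection subgroup $\Sigma_\phi$, so that $w(N)\subset\pos$ already forces $w(\Phi^+_{\Sigma_\phi})\subset\pos$. Geometric irreducibility of an effective class is not the same thing as root-theoretic indecomposability; what you actually need is the obtuseness condition that distinct irreducible curves meet non-negatively (in the anti-canonical case this is Lemma~\ref{C:nodalinter}, which gives $\alpha\cdot\beta\in\{0,1\}$) together with positive independence of $N$, and neither is addressed. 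This is not a cosmetic omission, because both halves of your forward direction are routed through this identity: the ``$d=n$'' half through the index, and the ``$N$ finite'' half through the assertion that $N$ is a Coxeter generating set for $\Sigma_\phi$ (an infinite generating set alone does not preclude finite generation).

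The decisive gap, however, is the one you name yourself: the converse implication, that $d=n$ together with finiteness of $N$ forces $W(S,\phi)$ to be finite, is not proved but only described as ``where I expect the genuine work to lie.'' You are right that it cannot follow from abstract Coxeter theory --- a full-rank reflection subgroup of a Lorentzian Coxeter group generated by finitely many roots may have infinite index, its chamber having infinite volume in the associated hyperbolic space --- so some genuinely surface-theoretic input about which nodal configurations occur on blowups of $\p^2$ is indispensable. But locating the difficulty is not the same as resolving it: as written, one half of an ``if and only if'' is absent, so the proposal does not constitute a proof of the theorem. Your closing observation --- that in the setting of this paper Corollary~\ref{C:finite} makes the finiteness hypothesis automatic, so the criterion collapses to the single condition that the nodal roots span $\kappa_S^\perp\otimes\mathbb{Q}$ --- is correct and reflects how the result is actually used here.
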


%
  
 \subsection{Automorphisms}

  Suppose a rational surface $S$ admits two marked blowups, $(S, \phi)$ and $(S, \phi')$. Also suppose $(S,\phi) \cong (S, \phi')$. Then $\phi' = \phi\circ \omega$ for some $\omega \in W_n$ and there exists an automorphism on $S$. Let 
  \[ \text{Aut}(S,\phi) = \{ \omega \in W(S, \phi) :(S,\phi) \cong (S, \phi \circ \omega) \} \]

%
%

\section{Marked cuspidal cubic}\label{S:cubic}
Suppose the anti-canonical class of $\pi:S \to \p^2$ is effective, irreducible, and reduced. Then there is an irreducible curve $C$ such that $[C] = - \kappa_S$ and the every center of blowup, $p_i \in S_{i-1}$ is in the projection of $C$, $\pi_i \circ \cdots \circ \pi_n \, (C) \subset S_{i-1}$. Thus, a marked blowup $(S, \phi)$ is determined by a set of points $\{p_1, \dots, p_n\}$, the markings on  $C$ together with their heights. For an irreducible and reduced cubic curve $X \subset \p^2$, let us denote $X^*$, the set of smooth points of $X$. 

\subsection{Marked Cubic curves.}\label{SS:gmc} Similar to generalized Marked Blowups, we make a slight modification in the definition of Marked cubic curves in \cite{McMullen:2007} to allow iterated blowups.  

\begin{defn}\label{D:gmc}
A marked cubic curve $(X, \rho)$ is an abstract curve $X$ equipped with a homomorphism $\rho: \zm \to Pic(X)$ such that 
\begin{enumerate}
\item $\rho(e_0)$ provides an embedding $X \hookrightarrow \p^2$, making $X$ into a cubic curve, 
\item there are a positive integer $ 3 \le k\le n$ and a set of pairs \[ P= \{(p_i, h_i) : p_i \ne p_j \text{ for }  1 \le i \ne j \le k \} \subset X^* \times \mathbb{N}_{\ge 1} \] such that
\begin{itemize}
\item the sum $\sum_{i=1}^k h_i= n$,
\item there are $k$ pairwise disjoint subsets $I_i$ of $\{1, \dots, n\}$ such that $|I_i| = h_i$, and 
\item $\rho(e_j) = [p_i]$ for $j \in I_i$. 
\end{itemize}
\end{enumerate}
\end{defn}

We call a positive integer $h(p_i):=h_i$ assigned to $p_i$  above a \textit{height} of $p_i$. As defined in \cite{McMullen:2007}, we say two marked cubics $(X,\rho)$ and $(X',\rho')$ are isomorphic, $(X, \rho) \cong (X',\rho')$ if there exists a biholomorphic map $f:X \to X'$ such that $\rho' = f_* \circ \rho$. Notice that a marked cubic in \cite{McMullen:2007} only allows $h_i = 1$ for all $i=1, \dots, n$. Since the heights of base points could be bigger than $1$, we have the following
\begin{lem}\label{L:cubicheight}
If two marked cubics $(X,\rho)$ and $(X',\rho')$ are isomorphic, then there is $1-1$ correspondence between the sets of base points with their heights $P = \{ (p_i, h_i), 1 \le i \le k \}$ and $P'= \{ (p_i', h_i), 1 \le i \le k \}$ satisfying $p_i \ne p_j$ and $p'_i \ne p'_j$ for all $i \ne j$. 
\end{lem}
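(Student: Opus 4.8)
The plan is to show that the unordered collection of base points with their heights is an intrinsic invariant of the marking $\rho$ alone, and then to transport it across the isomorphism $f$. The essential geometric input is that the Abel--Jacobi map $X^* \to \mathrm{Pic}^0(X)$, $p \mapsto [p]-[p_0]$, is injective; equivalently, two distinct smooth points of an irreducible cubic are never linearly equivalent. This holds because an irreducible plane cubic has arithmetic genus one, so its generalized Jacobian $\mathrm{Pic}^0(X)$ is one-dimensional---an elliptic curve, $\mathbb{G}_m$, or $\mathbb{G}_a$ according as $X$ is smooth, nodal, or cuspidal---and the Abel--Jacobi map identifies $X^*$ with it. In the cuspidal case that concerns us, $X^* \cong \mathbb{G}_a$ and injectivity is immediate from the explicit group law on $X^* \cong \mathbb{C}$.

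First I would recover the base-point data from $\rho$ alone. By Definition \ref{D:gmc} each $\rho(e_j)$ is the class $[p_i]$ of a single smooth point, with $j \in I_i$; by the injectivity above this point is the unique smooth point $q^{(j)} \in X^*$ with $[q^{(j)}] = \rho(e_j)$. Declaring $j \sim j'$ whenever $\rho(e_j) = \rho(e_{j'})$ then partitions $\{1,\dots,n\}$, and comparing with Definition \ref{D:gmc} shows this partition must coincide with $\{I_i\}$, that the base points are exactly the distinct values among the $q^{(j)}$, and that $h_i$ is the size of the corresponding block. Thus $\{(p_i,h_i)\}$ is uniquely determined by $\rho$ with no choices remaining; this is where the hypothesis $p_i \neq p_j$ for $i \neq j$ enters, as it forbids distributing a single divisor class among two distinct base points.

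Next I would apply the isomorphism. Since $f\colon X \to X'$ is biholomorphic it carries $X^*$ onto $(X')^*$ and satisfies $f_*([p]) = [f(p)]$ on classes of smooth points. From $\rho' = f_* \circ \rho$ we obtain, for $j \in I_i$,
\[ \rho'(e_j) = f_*(\rho(e_j)) = f_*([p_i]) = [f(p_i)]. \]
Running the recovery procedure of the previous paragraph for $(X',\rho')$ then returns the base points $f(p_1),\dots,f(p_k)$---distinct because $f$ is injective---with the same blocks $\{I_i\}$ and hence the same heights $h_i$. This produces the desired bijection $(p_i,h_i) \mapsto (f(p_i),h_i) = (p_i',h_i)$, and in particular $k = k'$.

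The main obstacle is precisely the injectivity of the Abel--Jacobi map, that is, ruling out $[p] = [q]$ for distinct smooth points $p,q$; once this is available the combinatorics of recovering and transporting the partition is routine. I would isolate it as a preliminary observation---valid for any irreducible reduced cubic and elementary in the cuspidal case---rather than reprove it inline.
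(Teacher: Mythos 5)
Your proof is correct and takes essentially the same route as the paper's: transport the partition $\{I_i\}$ defined by $\rho(e_j)=[p_i]$ through the relation $\rho'=f_*\circ\rho$. The paper's one-line proof leaves implicit the point you rightly isolate as the key input---that distinct smooth points of an irreducible reduced cubic have distinct classes in $\mathrm{Pic}(X)$, so the base points and their heights are genuinely recoverable from $\rho$ alone---so your version is a more careful rendering of the same argument rather than a different one.
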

\begin{proof}
If $p_i$ has a height $h_i$, then there is a subset $I_i \subset \{ 1, \dots, n \}$ such that $\rho(e_j) = [p_i]$ for all $j \in I_i$. It follows that $\rho'(e_j) = \rho'(e_s)$ for all $i, s \in I_i$. 
\end{proof}
We also define \[ W(X,\rho) = \{ \omega \in W_n: (X, \rho\circ \omega) \text{ is a marked cubic. }\}\]
and \[ \text{Aut}(X, \rho) = \{ \omega \in W(X, \rho) : (X, \rho) \cong (X, \rho\circ \omega)\}.\]

\subsection{Anti-Canonical surfaces} The main object of this article is a rational surface with an effective, irreducible, and reduced anti-canonical divisor. Let us denote $(S, Y, \phi)$ a marked blowup with a unique anti-canonical curve $Y$. If  $(S, \phi)\cong (S', \phi')$, then there is a biholomorphism $F:S \to S'$ such that $F_*$ maps $\phi(e_i)$ to $\phi'(e_i)$. 
\begin{defn}
We say $(S,Y, \phi)$ and $(S',Y' \phi')$ are isomorphic, $(S,Y, \phi)\cong (S',Y' \phi')$ if there is a biholomirphic map $F:S \to S'$ such that $F_* \circ \phi = \phi'$ and $F(Y) = Y'$. 
\end{defn}

And we define 
\[ \text{Aut}(S,Y, \phi) = \{ \omega \in W_n : (S, Y, \phi) \cong (S, Y, \phi \circ \omega) \}.\]
If $\omega \in \text{Aut}(S,Y, \phi)$, then there is an automorphism $F:S \to S$ preserving $Y$ such that $\phi\circ \omega = F_* \circ \phi$ on $\mathbb{Z}^{1,n}$. In this case, we say $\omega$ is \textit{realized} by $F \in \text{Aut}(S,Y)$.

\subsection{Cuspidal Cubic}
Suppose that $X$ is a cuspidal cubic with a set $X^*$ of smooth points on $X$. 
Then $Pic_0(X) = \mathbb{C}$, an additive group. If $f \in Aut(X)$ then $f(t) = a t+b$.  We call $a$ the \textit{determimant} of $f$ as in \cite{McMullen:2007}. Note that if $\omega$ is a meromorphic two form on $X$, then $f^* \omega = a \omega$. 

Suppose $(X, \rho)$ is a marked cuspidal cubic. We have $\deg(\rho(u)) = - u \cdot \kappa_n$ for $u \in \mathbb{Z}^{1,n}$ and let $\rho_0$ be the restriction \[ \rho_0 : Ker(\text{deg} \circ \rho) \to Pic_0(X). \]

\subsubsection{Blowup of a Marked cuspidal cubic}
If $(X, \rho)$ is a marked cubic with markings $P= \{ (p_i, h_i) \}$ and the embedding $X \subset \p2$ determined by $\rho(e_0)$ is a cuspidal cubic Let $(S, \phi)$ be the marked blowup with base points \[P =\cup_i   \{ p_i^{(j)}, j=1, \dots h_i\}\]  where \[ p_i^{(1)} = p_i \in X \ \ \text{  and  }\ \  p_i^{(j+1)} = \mathcal{F}(p_i^{(j)}) \cap \tilde X, \ j=2, \dots, h_i-1\] where $\mathcal{F}(p_i^{(j)})$ is the exceptional curve over $p_i^{(j)}$ and $\tilde X$ is the strict transform of $X$. Let $Y$ be the strict transform of $X$ in $S$. Then $[Y] = - \kappa_S$. Thus we have \[ (S, Y, \phi) \ =\ \text{B}\ell (X, \rho)\] and we call this the blowup of $(X, \rho)$ (or a marked pair) as in \cite{McMullen:2007}.

On the other hand, suppose $(S, Y, \phi)$ is a marked blowup with the unique anti-canonical curve $Y$ such that $X=\pi(Y)$ is a cuspidal cubic. Suppose the base locus is given by \[P= \ \{ p_1, \dots p_n\} \ \ =\  \{ q^{(j)}_i, 1 \le j \le h_i, 1 \le i \le k\}.\]   Then with the restriction map $r: Pic(S) \to Pic(Y)$, we have a marking $\rho : \mathbb{Z}^{1,n} \to Pic(X)$ such that $\rho(e_m) = [ \pi (p_m)] $ where $ \pi (p_m) = q_i$ if $p_m = q_i^{(j)}$ for some $j =1, \dots, h_i$. Thus $(S,Y, \phi)$ determines a marked cubic $(X, \rho)$. 

\subsubsection{Nodal roots} Suppose $(S,Y, \phi) = \text{B}\ell (X, \rho)$. Suppose $\alpha$ is a nodal root, then we have either $\alpha =e_i-e_j$ with $i<j$ or $\alpha$ is a geometric nodal root. For the first case, we have $\deg\circ \rho(\alpha) = -\alpha \cdot \kappa_n = 0$ and for the second case we also have  $\deg\circ \rho(\alpha)=0$ by \cite[Theorem~6.6]{McMullen:2007}. Thus we have

\begin{prop}\label{P:rhozeronodal} If $\alpha$ is a nodal root, then $\rho_0(\alpha) = 0$. 
\end{prop}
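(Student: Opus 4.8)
The plan is to prove the stronger statement that $\rho(\alpha)$ itself vanishes in $Pic(X)$; since $\rho_0$ is by definition the restriction of $\rho$ to $Ker(\deg\circ\rho)$, this immediately yields $\rho_0(\alpha)=0$. First I would record the structural fact that $\rho$ is nothing but restriction to the anti-canonical curve: under the isomorphism $\pi|_Y:Y\to X$ one has $\rho=(\pi|_Y)_*\circ r\circ\phi$, where $r:Pic(S)\to Pic(Y)$ is the restriction map. Because $\alpha$ is a root it lies in $\kappa_n^\perp$, so $\deg\rho(\alpha)=-\alpha\cdot\kappa_n=0$ and $\rho_0(\alpha)$ is indeed defined; the whole content of the statement is that the degree-zero class $\rho(\alpha)\in Pic_0(X)\cong\mathbb{C}$ is trivial.

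The key step is geometric. Since $\alpha$ is a nodal root, $\phi(\alpha)=[D]$ for an irreducible reduced curve $D\subset S$ (either the $(-2)$-curve $\mathcal{F}(p_i)$ when $\alpha=e_i-e_j$, or a smooth rational curve of degree $1$ or $2$ when $\alpha$ is geometric, by Proposition \ref{P:nodaldegree}). Using that $\phi$ carries the Minkowski form to the intersection pairing and that $[Y]=-\kappa_S=\phi(-\kappa_n)$, I compute
\[ D\cdot Y \;=\; \phi(\alpha)\cdot\phi(-\kappa_n) \;=\; -\,\alpha\cdot\kappa_n \;=\; 0. \]
Moreover $D\ne Y$, for otherwise $\alpha=-\kappa_n$, which is impossible since $\alpha\cdot\kappa_n=0$ while $(-\kappa_n)\cdot\kappa_n=n-9\ne 0$ for $n\ge 10$. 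Two distinct irreducible curves on the smooth surface $S$ whose intersection number is $0$ meet in no point, so $D$ and $Y$ are disjoint.

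To conclude, disjointness of $D$ and $Y$ gives $\mathcal{O}_S(D)|_Y=\mathcal{O}_Y(D\cap Y)=\mathcal{O}_Y$, that is $r(\phi(\alpha))=0$ in $Pic(Y)$; transporting along $\pi|_Y$ gives $\rho(\alpha)=0$ in $Pic(X)$, hence $\rho_0(\alpha)=0$. This single argument covers both types of nodal root simultaneously. Alternatively, for $\alpha=e_i-e_j$ one may argue directly that $\rho(e_i)=\rho(e_j)=[\pi(p_i)]$, so $\rho(\alpha)=0$, and for a geometric nodal root one may instead invoke \cite[Theorem~6.6]{McMullen:2007}, which is the route the surrounding discussion points to.

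The step I expect to require the most care is the identification $\rho=(\pi|_Y)_*\circ r\circ\phi$ together with the fact that $\pi|_Y$ is an isomorphism onto $X$: this is exactly what makes ``restriction to $Y$'' and ``the marking $\rho$'' literally the same homomorphism, and it is where the hypothesis that $Y$ is the strict transform of the cuspidal cubic $X=\pi(Y)$ (with every base point a smooth point of $X$) is used. The remaining ingredients — that an effective irreducible nodal class is represented by an honest irreducible curve, and that intersection number $0$ between two distinct irreducible curves on a smooth surface forces disjointness — are standard.
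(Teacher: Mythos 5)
Your proof is correct, and it gives a unified, self-contained argument where the paper splits into cases: the paper disposes of $\alpha=e_i-e_j$ by noting directly that $\rho(e_i)=\rho(e_j)=[\pi(p_i)]$, and handles geometric nodal roots by citing \cite[Theorem~6.6]{McMullen:2007}. Your disjointness argument ($D\cdot Y=-\alpha\cdot\kappa_n=0$, $D\ne Y$, hence $D\cap Y=\emptyset$ and $r(\phi(\alpha))=0$) is in substance exactly the content of that cited theorem, and essentially the same reasoning reappears in the paper's proof of Theorem \ref{T:zeronodal}; so what you buy is a single proof covering both types of nodal root without an external citation, at the cost of having to justify $\rho=(\pi|_Y)_*\circ r\circ\phi$, which the paper's construction of $\rho$ from the restriction map indeed supplies. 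One small caveat: your argument that $D\ne Y$ via $(-\kappa_n)\cdot\kappa_n=n-9\ne 0$ uses $n\ge 10$ (and would degenerate at $n=9$); a cleaner and fully general reason, and the one the paper uses, is that $Y$ is the strict transform of a cuspidal cubic blown up only at smooth points, hence singular, whereas $D$ is either a smooth rational curve or a component of an exceptional divisor. This does not affect correctness in the paper's setting.
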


\section{Automorphisms}\label{S:auto}
Let $(S, Y, \phi) = \text{B}\ell(X,\rho)$ be a marked blowup with the unique anti-canonical curve $Y$ such that  $X= \pi Y$ is a cuspidal cubic. Let $\text{Aut}(S, Y)$ be the group of automorphism of $S$ fixing $Y$.  We have 
\begin{thm}\cite[Theorem~6.1]{McMullen:2007}\label{T:autgroup}
Suppose $(S, Y, \phi) = \text{B}\ell(X,\rho)$. Then we have \[ \text{Aut}(S, Y, \phi)\ =\ \text{Aut}(X, \rho) \cap W(S, \phi) \]
\end{thm}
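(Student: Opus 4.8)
The plan is to prove the asserted equality by establishing both inclusions, using throughout the identity $\rho = r\circ\phi$ built into $\text{B}\ell(X,\rho)$, where $r:Pic(S)\to Pic(Y)$ is restriction and $Pic(Y)$ is identified with $Pic(X)$ through the isomorphism $\pi|_Y:Y\to X$ (an isomorphism since every base point lies in $X^*$, so the cusp is never blown up). The conceptual core is a functoriality principle: an isomorphism of marked cubics lifts uniquely to one of their blowups, and dually an automorphism of $(S,Y,\phi)$ restricts to one of $(X,\rho)$.

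For the inclusion $\subseteq$, I would take $\omega\in\text{Aut}(S,Y,\phi)$ realized by $F:S\to S$ with $F_*\circ\phi=\phi\circ\omega$ and $F(Y)=Y$. That $\omega\in W(S,\phi)$ is immediate: $F$ being biholomorphic, $(S,\phi\circ\omega)=(S,F_*\phi)$ inherits a blowup structure from $(S,\phi)$ via $F^{-1}$. Writing $f:=F|_Y$, the relation $F(Y)=Y$ gives $r\circ F^*=f^*\circ r$ on line bundles, hence $r\circ F_*=f_*\circ r$ after inverting the two automorphisms; therefore $\rho\circ\omega=r\circ F_*\circ\phi=f_*\circ r\circ\phi=f_*\circ\rho$, so the automorphism $g$ of $X$ induced by $f$ exhibits $(X,\rho)\cong(X,\rho\circ\omega)$. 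Since restricting the marked blowup $(S,\phi\circ\omega)$ to $Y$ shows $\omega\in W(X,\rho)$ as well, we obtain $\omega\in\text{Aut}(X,\rho)\cap W(S,\phi)$.

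For the inclusion $\supseteq$, I would take $\omega\in\text{Aut}(X,\rho)\cap W(S,\phi)$. From $\omega\in W(S,\phi)$ the pair $(S,\phi\circ\omega)$ is a marked blowup whose anti-canonical curve is again $Y$ (as $\omega$ fixes $\kappa_S$); by the lemmas of Section~\ref{S:gMB} the base locus of such a blowup is determined by its marked cubic, so, writing $(S'',Y'',\phi''):=\text{B}\ell(X,\rho\circ\omega)$, there is an isomorphism $\beta:S\to S''$ with $\beta_*(\phi\circ\omega)=\phi''$ and $\beta(Y)=Y''$. From $\omega\in\text{Aut}(X,\rho)$ there is $g:X\to X$ with $g_*\circ\rho=\rho\circ\omega$. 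Because the two copies of $X$ are embedded in $\p^2$ by the complete linear systems of $\rho(e_0)$ and $\rho(\omega e_0)$ and $g_*\rho(e_0)=\rho(\omega e_0)$, the map $g$ extends to a projective transformation of $\p^2$ carrying the first base configuration onto the second (a smooth point of the cuspidal cubic being recovered from its divisor class); by Lemma~\ref{L:cubicheight} it preserves heights, hence carries the whole tower of (possibly infinitely near) base points across, and so lifts to a biholomorphism $G:\text{B}\ell(X,\rho)\to\text{B}\ell(X,\rho\circ\omega)$ with $G_*\phi=\phi''$ and $G(Y)=Y''$. Then $F:=\beta^{-1}\circ G$ satisfies $F_*\phi=\phi\circ\omega$ and $F(Y)=Y$, giving $\omega\in\text{Aut}(S,Y,\phi)$.

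The step I expect to be the main obstacle is the lift of the projective transformation to the blowups when some heights exceed $1$, i.e.\ verifying that it respects the infinitely near structure. This is precisely what the earlier description of the base locus secures: each infinitely near point is cut out as the intersection of the strict transform of the cubic with the appropriate exceptional curve (the lemmas preceding Lemma~\ref{L:selfintersectionreduction}), together with the height-preservation of Lemma~\ref{L:cubicheight}. Granting these, the lift is automatic and unique and the remainder is bookkeeping of markings; this is exactly where the argument genuinely extends McMullen's distinct-point construction in \cite{McMullen:2007}.
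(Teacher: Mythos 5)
Your proposal is correct and follows essentially the same route as the paper's proof: for $\supseteq$ it identifies $(S,\phi\circ\omega)$ with $\text{B}\ell(X,\rho\circ\omega)$, extends the marked-cubic automorphism $g$ to a projective transformation via the degree-$3$ linear systems, and lifts it to the blowup using the height data of Lemma~\ref{L:cubicheight} and the determination of infinitely near base points by the strict transforms of the cubic; for $\subseteq$ it restricts the automorphism $F$ to $Y\cong X$. Your writeup merely spells out the lifting through the infinitely near structure in more detail than the paper, which defers that bookkeeping to McMullen's original argument.
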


\begin{proof}
Since $Y$ is the unique anti-canonical curve and $X= \pi(Y)$ is irreducible, the proof is essentially the same as in \cite{McMullen:2007}. Suppose $\omega \in \ \text{Aut}(X, \rho) \cap W(S, \phi)$. Then $\phi$ and $\phi' = \phi\circ \omega$ are two markings on $S$. Let $\pi'$ be the projection corresponding to $\phi'$. We can choose the embedding of $\pi'(Y) \subset \p^2$ so that $(S, Y, \phi') = \text{B}\ell(X, \rho'=\rho\circ \omega)$.
Since $\omega \in \text{Aut}(X, \rho)$, there is an automorphism $g$ on $X$ such that $g(p_i) = p'_j$ with $h(p_i) = h(p'_j)$. Thus $g$ lifts to an automorphism $F$ on $S$ such that $\pi \circ g = F \circ \pi'$ and $F(Y) = Y$. Thus $\omega \in  \text{Aut}(S, Y, \phi)$. 
If $\omega \in  \text{Aut}(S, Y, \phi)$, then by definition $\omega \in W(S, \phi)$ and there is an automorphism $F$ preserving $Y$.  It follows that there is a birational map $f$ on $\p^2$ such that the restriction $f|X$ is an automorphism on $X$.
\end{proof}

For any automorphism $F$ on $S$ preserving $Y$, we have a constant $\delta(F) = det DF_p$ for all fixed points $p \in S\setminus Y$. We call this constant $\delta(F)$ the determinant of $F$. As a special case of Theorem $6.5$ in \cite{McMullen:2007}, we have 
\begin{thm}\label{T:determinant} For any $F \in \text{Aut}(S,Y)$, we have the determinant of $f|X = \delta(F)$, that is \[ f|X\,(t) = \delta(F) t + b,\quad \quad  t \in \mathbb{C}\]
\end{thm}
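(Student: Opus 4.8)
The plan is to prove Theorem \ref{T:determinant} by tracking how $F$ acts on a meromorphic (or rational) $2$-form on $S$ and relating its local scaling at fixed points off $Y$ to the multiplier of the induced automorphism $f|X$ on the cuspidal cubic. The starting point is that since $[Y]=-\kappa_S$, the surface $S$ carries a meromorphic $2$-form $\eta$ whose divisor of poles is exactly $Y$ (a section of $-K_S$ gives a nowhere-vanishing holomorphic $2$-form on $S\setminus Y$ with a simple pole along $Y$, by adjunction). This $\eta$ is unique up to scalar because $Y$ is the unique anti-canonical curve. The automorphism $F$ preserves $Y$, hence $F^*\eta = c\,\eta$ for some constant $c\in\mathbb{C}^*$, and I claim $c=\delta(F)$ and $c$ equals the determinant of $f|X$.

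First I would establish $c=\delta(F)$. At any fixed point $p\in S\setminus Y$, the form $\eta$ is holomorphic and nonvanishing near $p$, so in local coordinates $\eta = u(z,w)\,dz\wedge dw$ with $u(p)\neq 0$. Pulling back by $F$ and comparing leading terms at $p$ gives $F^*\eta|_p = \det DF_p\cdot \eta|_p$ up to the ratio $u(F(p))/u(p)=u(p)/u(p)=1$ since $F(p)=p$; thus $c=\det DF_p=\delta(F)$, and in particular this shows $\det DF_p$ is independent of the choice of fixed point, justifying the name ``determinant of $F$.'' The key point is that the single global relation $F^*\eta=c\,\eta$ forces the local Jacobian to be the same constant at every fixed point away from the polar locus $Y$.

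Next I would identify $c$ with the multiplier of $f|X$. By Theorem \ref{T:autgroup} and its proof, $F$ preserving $Y$ descends to an automorphism $f|X$ of the cuspidal cubic $X=\pi(Y)$, and since $Pic_0(X)\cong\mathbb{C}$ additively, every automorphism of $X$ fixing the cusp acts on the smooth locus $X^*$ by $t\mapsto a t+b$, where $a$ is the determinant of $f|X$ in the sense defined in Section \ref{S:cubic}. The relation recorded there is that if $\omega_X$ is a meromorphic $1$-form on $X$ then $(f|X)^*\omega_X = a\,\omega_X$. The bridge between the surface form $\eta$ and the curve form $\omega_X$ is the Poincaré residue: the residue of $\eta$ along its polar divisor $Y$ is a meromorphic $1$-form $\mathrm{Res}_Y\,\eta$ on $Y$, canonically identified (via $\pi|_Y$) with a $1$-form on $X$. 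Because residue is natural with respect to the automorphism and $F^*\eta=c\,\eta$, I get $(F|Y)^*\mathrm{Res}_Y\eta = c\,\mathrm{Res}_Y\eta$, which transports to $(f|X)^*\omega_X = c\,\omega_X$, forcing $a=c$. Combining the two identifications yields $a=\delta(F)$, i.e. $f|X(t)=\delta(F)\,t+b$, which is the assertion.

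The main obstacle I anticipate is the residue-naturality step, specifically making the identification of $\mathrm{Res}_Y\eta$ with the invariant form $\omega_X$ on the cuspidal cubic precise. A cuspidal cubic is singular, so one must be careful: the residue lives on the smooth surface relative to its smooth divisor part, and the comparison with the $1$-form governing $t\mapsto at+b$ on $X^*$ goes through the normalization of $X$ (where the cusp resolves and $Pic_0\cong\mathbb{C}$ is visibly the additive group). Rather than grind through this analytically, I would lean on the framework already set up in McMullen \cite{McMullen:2007}, since the statement is explicitly flagged as a special case of his Theorem $6.5$; the only new input needed is that $Y$ is \emph{unique}, which guarantees $\eta$ (and hence $c$) is well defined and that $F^*\eta$ must be a scalar multiple of $\eta$ rather than a different anti-canonical section. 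Thus the proof reduces to citing McMullen's computation of the determinant once the uniqueness of $Y$ pins down the relevant $2$-form.
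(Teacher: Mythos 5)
Your proposal is correct and matches the paper's treatment: the paper offers no independent argument for Theorem \ref{T:determinant}, simply recording it as a special case of McMullen's Theorem 6.5, and your sketch (the meromorphic $2$-form $\eta$ with polar divisor $Y$, the relation $F^*\eta=c\,\eta$ giving $c=\det DF_p=\delta(F)$ at fixed points off $Y$, and the Poincar\'e residue transporting $c$ to the multiplier of $f|X$) is exactly the argument underlying McMullen's theorem. The only caveat is minor: the constancy of $F^*\eta/\eta$ already follows from $F(Y)=Y$ and compactness, so uniqueness of the anti-canonical curve is not actually needed for that step.
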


\section{Realizing Weyl Spectrum}\label{S:realization}
Let $\mathbb{C}^{1,n} = \mathbb{Z}^{1,n} \otimes \mathbb{C}$ with the complex Minkowski form. For each $v \in \mathbb{C}^{1,n}$, we let \[ W_n^v = \{ \omega \in W_n: [v] \in \mathbb{C}^{1,n}/\mathbb{C} \kappa_n \ \text{ is an eigenvector for } \omega \} \]

\subsection{Blowup of a marked cuspidal cubic}The following construction is essentially the same as the one in \cite[Section~7]{McMullen:2007}. One difference is that we allow the iterated blowups over a point in $\p^2$. Let $X= \{ p(t)=[1:t: t^3] , t \in \mathbb{C} \cup \{\infty\} \}$ be the cuspidal cubic with the smooth locus $X^*$. For each $v \in \mathbb{C}^{1,n}$, define 
\[  3 t_0 = v\cdot e_0, \qquad \  t_i = v \cdot e_i,   \quad  \text{and} \quad  p_i = p(t_i-t_0) \quad \text{ for } i >0.  \]
Let $\{q_i : 1 \le i \le k\}$ be the set of distinct points in $\{p_i: 1 \le i \le n\}$, and $h_i:=h(q_i) = \# \{ j : p_j = q_i\}$. Then the set of pairs $\{ (q_i, h_i), 1 \le i \le k \}$ determines a marking $\rho^v: \mathbb{Z}^{1,n} \to Pic(X)$, and \[ \rho^v_0(u) = (u \cdot v) [ p(1)-p(0)]. \] Since $Pic_0(X) \cong \mathbb{C}$ and the height of each base point is given by the occurrence of coordinates of $v$, from Lemma \ref{L:cubicheight} we have \[ (X, \rho^v) \cong (X, \rho^{u})\ \ \Leftrightarrow u = av + \mu \kappa_n,  \text{for some } a, \mu \in \mathbb{C}. \] Thus, we have 
\begin{prop}\label{P:autcubic}
Let $v \in \mathbb{C}^{1,n}$. Then we have \[ \text{Aut}(X, \rho^v) = W_n^v. \]
\end{prop}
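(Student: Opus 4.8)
The plan is to prove the statement $\text{Aut}(X, \rho^v) = W_n^v$ by establishing both inclusions, using the explicit description of $\rho^v$ given just above the proposition together with the isomorphism criterion for marked cubics from Lemma~\ref{L:cubicheight}. Recall that $\text{Aut}(X,\rho^v) = \{\omega \in W_n : (X,\rho^v)\cong (X,\rho^v\circ\omega)\}$, so the entire question reduces to understanding when the marked cubic $(X, \rho^v\circ\omega)$ is isomorphic to $(X,\rho^v)$. The central tool is the formula $\rho^v_0(u) = (u\cdot v)[p(1)-p(0)]$, which identifies the degree-zero part of the marking with the linear functional $u \mapsto u\cdot v$ on $\mathbb{Z}^{1,n}$ (valued in $Pic_0(X)\cong\mathbb{C}$).

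First I would unwind the isomorphism condition. By the stated equivalence $(X,\rho^v)\cong(X,\rho^u) \Leftrightarrow u = av + \mu\kappa_n$ for some $a,\mu\in\mathbb{C}$, applied with $u$ replaced by the marking vector associated to $\rho^v\circ\omega$, an automorphism $g\in\text{Aut}(X)$ realizing the isomorphism acts on $Pic_0(X)$ by multiplication by its determinant $a$. Since $g_*\circ\rho^v = \rho^v\circ\omega$ translates on degree-zero parts into $a\,(u\cdot v) = (\omega u)\cdot v$ for all $u$ (up to the $\kappa_n$-ambiguity, and using that $\omega$ fixes $\kappa_n$), this says precisely that the linear functional $u\mapsto (\omega u)\cdot v$ is a scalar multiple of $u\mapsto u\cdot v$. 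Because $\omega\in W_n\subset O(\mathbb{Z}^{1,n})$ preserves the Minkowski form, $(\omega u)\cdot v = u\cdot(\omega^{-1}v)$, so the condition becomes $\omega^{-1}v = a^{-1}v + (\text{multiple of }\kappa_n)$, i.e. $[v]$ is an eigenvector for $\omega^{-1}$ (equivalently for $\omega$) in $\mathbb{C}^{1,n}/\mathbb{C}\kappa_n$. This is exactly the defining condition for $\omega\in W_n^v$.

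The two inclusions then fall out of this computation. For $\text{Aut}(X,\rho^v)\subseteq W_n^v$: if $\omega$ realizes an isomorphism of marked cubics, running the argument above forward produces the eigenvector relation, so $[v]$ is an eigenvector and $\omega\in W_n^v$. For $W_n^v\subseteq\text{Aut}(X,\rho^v)$: if $[v]$ is an eigenvector for $\omega$ with eigenvalue $\lambda$ in $\mathbb{C}^{1,n}/\mathbb{C}\kappa_n$, I would define $g\in\text{Aut}(X)$ to be the automorphism of the cuspidal cubic with the appropriate determinant (reading off the scalar from $\lambda$ via the identification $Pic_0(X)\cong\mathbb{C}$ and the formula for $\rho^v_0$), and check that $g_*\circ\rho^v = \rho^v\circ\omega$ on both the degree part and the degree-zero part; the degree part is automatic since $\deg\circ\rho = -(\cdot)\cdot\kappa_n$ is $W_n$-invariant, and the degree-zero part is the eigenvector relation. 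Here I must also verify the height/multiplicity bookkeeping: by Lemma~\ref{L:cubicheight} an isomorphism requires a height-preserving bijection of base points, and since heights are recorded by the multiplicities of the coordinates $t_i = v\cdot e_i$, I need that $\omega$ permutes these coordinate data compatibly — this is guaranteed because $\omega$ being an eigenvector-stabilizer of $[v]$ makes $\rho^v\circ\omega$ arise from the scaled vector $av+\mu\kappa_n$, whose coordinate multiplicities match those of $v$.

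The step I expect to be the main obstacle is the careful treatment of the quotient $\mathbb{C}^{1,n}/\mathbb{C}\kappa_n$: the eigenvector condition is stated modulo $\kappa_n$, whereas the marked-cubic isomorphism allows the $\mu\kappa_n$ shift, and I must confirm these two ambiguities are precisely the same and that no spurious degree-zero information is lost when passing to the quotient. Concretely, I need that $u\cdot\kappa_n$ controls exactly the degree of $\rho^v(u)$, so that the $\kappa_n$-direction is invisible to $\rho^v_0$ and the eigenvector-mod-$\kappa_n$ statement is equivalent to the functional identity $a(u\cdot v)=(\omega u)\cdot v$ on $Ker(\deg\circ\rho^v)$ — then extending off the kernel is forced by $W_n$-invariance of degree. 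Once this linear-algebra identification is pinned down, the realization of $g$ as a genuine automorphism of $X$ and the verification that it preserves the height structure are routine given Lemma~\ref{L:cubicheight} and the explicit parametrization $X=\{[1:t:t^3]\}$.
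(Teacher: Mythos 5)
Your proposal is correct and follows essentially the same route as the paper, which derives the proposition directly from the displayed equivalence $(X,\rho^v)\cong(X,\rho^u)\Leftrightarrow u=av+\mu\kappa_n$ by identifying $\rho^v\circ\omega$ with $\rho^{\omega^{-1}v}$ via $(\omega u)\cdot v=u\cdot(\omega^{-1}v)$. Your extra care with the $\mathbb{C}\kappa_n$ ambiguity and the height bookkeeping (Lemma~\ref{L:cubicheight}) is exactly the content the paper leaves implicit in its ``Thus, we have.''
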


 Let $(S^v, Y^v, \phi^v) = \text{B}\ell(X, \rho^v)$ where $Y^v$ is the strict transform of $X$.

\subsection{$\text{Aut}(S^v,Y^v,\phi^v)$} For a given $\omega \in W^{ess}_n$, let $\lambda(\omega)$, the spectral raidius. It is known \cite{Bedford-Kim:2006,gross2009cyclotomic} that the spectral radius of the coxeter element of $W_n$ is the largest real root of \[ \chi_n(t) = t^n(t^3-t-1) + (t^3+t^2-1) .\] Thus if $\omega \in W^{ess}_n$ for $n\ge 10$, then $\lambda(\omega)>1$. 
By Nagata \cite{Nagata, Nagata2}, it is well known that if $\lambda(\omega)>1$, then $\lambda(\omega)$ is a Salem number, and eigenvalues of $\omega$ are either Galois conjugates of $\lambda(\omega)$ or a root of unity. A \textit{Salem number} is an algebraic integer $>1$ whose minimal polynomial is reciprocal of degree at least $4$ and has exactly two roots outside the unit circle. 
In fact, we have,
\begin{thm} \cite{Nagata, Nagata2}
Suppose $\omega \in W_n$ and let $\lambda_\omega$ denote the spectral radius of $\omega$. If $\lambda_\omega>1$, then the characteristic polynomial $\chi_\omega(t)$ is given by a product of a Salem polynomial $Q(t)$  and a product $R(t)$ of cyclotomic factors \[ \chi_\omega(t) \ =\ Q(t) R(t). \]
\end{thm}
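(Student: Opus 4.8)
The plan is to read the factorization directly off the spectral picture of $\omega$ acting on the Lorentzian lattice $\zm$. First I would record the two structural constraints coming from $\omega\in W_n$. Since $W_n\subset GL(\zm)$ with $\omega$ an integer matrix, $\chi_\omega(t)=\det(tI-\omega)$ is monic with integer coefficients; and since $\omega$ preserves the nondegenerate Minkowski form with Gram matrix $G=\mathrm{diag}(1,-1,\dots,-1)$, we have $\omega^{-1}=G^{-1}\omega^{\mathsf T}G$, so $\chi_{\omega^{-1}}=\chi_{\omega^{\mathsf T}}=\chi_\omega$. Hence $\chi_\omega$ is reciprocal, i.e. its roots are closed under $\mu\mapsto 1/\mu$. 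Moreover both $\omega$ and $\omega^{-1}$ lie in $GL(\zm)$, so every eigenvalue of $\omega$ is an algebraic unit: both it and its inverse are algebraic integers.

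The heart of the argument is a signature count. Let $V_{>}$ be the span of all generalized eigenspaces of $\omega$ for eigenvalues of modulus $>1$. For eigenvectors $u,w$ with eigenvalues $\mu,\nu$ one has $u\cdot w=\omega u\cdot\omega w=\mu\nu\,(u\cdot w)$, so $u\cdot w=0$ whenever $\mu\nu\ne 1$; since $|\mu\nu|>1$ throughout $V_{>}$, the subspace $V_{>}$ is totally isotropic (this is the generalized-eigenspace form of Proposition \ref{P:orthogonal}). Because the Minkowski form has signature $(1,n)$, a totally isotropic subspace has dimension at most $1$, so $\dim V_{>}\le 1$. As $\lambda_\omega>1$, this forces exactly one eigenvalue $\lambda$ of modulus $\lambda_\omega$; it is real, since a non-real one would drag its complex conjugate into $V_{>}$ and make $\dim V_{>}\ge 2$, and it may be taken positive because $\omega$ is orthochronous, preserving the future light cone. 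By reciprocity $1/\lambda$ is then the unique eigenvalue of modulus $<1$, and every remaining eigenvalue has modulus exactly $1$.

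Next I would isolate the Salem factor. Let $Q$ be the minimal polynomial of $\lambda$ over $\mathbb{Q}$; it is monic, irreducible, and by Gauss's lemma has integer coefficients and divides $\chi_\omega$ in $\mathbb{Z}[t]$. I claim $Q$ is reciprocal. Since $\lambda$ is a unit, the product of the moduli of all its conjugates, i.e. $|Q(0)|$, equals $1$; and as $\lambda$ is the unique root of $\chi_\omega$ of modulus $>1$, every root of $Q$ other than $\lambda$ has modulus $\le 1$. If the reciprocal polynomial $Q^{*}(t)=t^{\deg Q}Q(1/t)$ were distinct from $Q$, then $Q$ would not contain $1/\lambda$, so all roots of $Q$ except $\lambda$ would lie on the unit circle and $|Q(0)|$ would equal $\lambda_\omega>1$, contradicting that $\lambda$ is a unit. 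Hence $Q=Q^{*}$ is reciprocal; it contains both $\lambda$ and $1/\lambda$, all its remaining roots lie on the unit circle, and $\lambda>1$ is real, so $Q$ is a Salem polynomial, with the degree bound $\deg Q\ge 4$ coming from $n\ge 10$ via Theorem \ref{T:minimumsp}.

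Finally I would dispatch the complementary factor. Set $R=\chi_\omega/Q\in\mathbb{Z}[t]$, which is monic by the divisibility above. Every root of $R$ is a root of $\chi_\omega$ that is not a conjugate of $\lambda$, hence lies on the unit circle; being a monic integer polynomial all of whose roots have modulus $1$, $R$ is by Kronecker's theorem a product of cyclotomic polynomials. This yields $\chi_\omega=Q\,R$, as asserted. The main obstacle is the signature step, namely pinning down that at most one eigenvalue has modulus greater than $1$, which is exactly where the Lorentzian signature $(1,n)$ and the isotropy of Proposition \ref{P:orthogonal} enter; once that is in place, the reciprocity of $Q$ and Kronecker's theorem finish the proof routinely.
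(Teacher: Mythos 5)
The paper offers no proof of this theorem --- it is quoted from Nagata --- so there is nothing internal to compare against; judged on its own, your argument is the standard signature (Hodge-index) proof and most of it is sound. Reciprocity of $\chi_\omega$ from preservation of the form, the bound $\dim V_{>}\le 1$ from total isotropy of the span of generalized eigenspaces for eigenvalues of modulus $>1$ (you should remark that this span is defined over $\mathbb{R}$, being stable under complex conjugation, before invoking the signature $(1,n)$ bound, which is a statement about \emph{real} isotropic subspaces), simplicity and positivity of $\lambda_\omega$, reciprocity of its minimal polynomial $Q$ via the unit-norm computation, and Kronecker's theorem for the cofactor $R$ are all correctly deployed, and together they prove that $\chi_\omega$ is an irreducible reciprocal factor $Q$ with exactly one root outside and one inside the unit circle, times a product of cyclotomics.

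The step that does not hold up is the degree bound. The paper's definition of a Salem polynomial demands degree at least $4$, equivalently that $Q$ have at least one conjugate on the unit circle, and your justification --- ``the degree bound $\deg Q\ge 4$ coming from $n\ge 10$ via Theorem \ref{T:minimumsp}'' --- is a non sequitur: that theorem bounds $\lambda_\omega$ from \emph{below} by the spectral radius of the Coxeter element and says nothing about $\deg Q$. Since $Q$ is irreducible and reciprocal with a root off the unit circle, its degree is automatically even and at least $2$, so the only case your argument leaves open is $\deg Q=2$, i.e.\ $\lambda_\omega$ a reciprocal quadratic unit such as $(3+\sqrt{5})/2$. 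Such numbers exceed the Lehmer-type lower bound, so Theorem \ref{T:minimumsp} cannot exclude them, and nothing in the signature argument does either: rank-two even Lorentzian lattices admit infinite-order isometries with quadratic irrational eigenvalues, so the exclusion (if true for $W_n$) requires an arithmetic input special to these lattices that you have not supplied. Either provide such an argument or state explicitly that you are proving the theorem under the weaker (and common) convention of Salem number that admits the quadratic case; as written, the final sentence of your third paragraph is the one unproved claim.
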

Since $\lambda(\omega)>1$ is a Salem number, we have 
\begin{lem}\label{L:nobasepoints}
Suppose $\omega \in W_n^{ess}$ for some $n\ge 10$. Let $v$ be the eigenvector corresponding to the spectral radius $\lambda(\omega)$. Then, the marked cuspidal cubic $(X, \rho^v)$ has at least three base points. 
\end{lem}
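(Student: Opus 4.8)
The plan is to translate the geometric statement into a linear-algebraic count of distinct coordinates and then obstruct a small number of base points using the Salem structure of $\omega$. First I would observe that, by the construction of $(X,\rho^v)$, the base point $p_i = p(t_i - t_0)$ depends only on $t_i = v\cdot e_i$ (with $3t_0 = v\cdot e_0$ fixed), and since $t \mapsto p(t) = [1:t:t^3]$ is injective on $\mathbb{C}$, two indices $i,j \geq 1$ yield the same base point exactly when $v\cdot e_i = v\cdot e_j$. Hence the number of distinct base points equals the number of distinct values in the list $(v\cdot e_1,\dots,v\cdot e_n)$, and it suffices to show that this list takes at least three values.

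Suppose toward a contradiction that it takes at most two values. Then $\{1,\dots,n\}$ partitions into at most two blocks $A\sqcup B$ on which $i\mapsto v\cdot e_i$ is constant. For any $i,j$ in a common block, the root $e_i-e_j\in\pos\cup\nee$ satisfies $(e_i-e_j)\cdot v=0$, so by Proposition \ref{P:periodic} it is periodic. The vectors $\{e_i-e_j: i,j\in A\}\cup\{e_i-e_j: i,j\in B\}$ lie in $V_n=\kappa_n^\perp$ and span a subspace of dimension $(|A|-1)+(|B|-1)=n-2$ (or $n-1$ if one block is empty). Thus the span of the periodic roots has dimension at least $n-2$.

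On the other hand, since $\omega\in W_n^{ess}$ with $n\ge 10$ has $\lambda_\omega=\lambda(\omega)>1$, the Nagata structure theorem above factors the characteristic polynomial as $\chi_\omega = Q\,R$ with $Q$ a Salem polynomial of degree $\deg Q\ge 4$ and $R$ a product of cyclotomic factors, so $\deg R = n-\deg Q\le n-4$. Every periodic root $\alpha$ satisfies $\omega^p\alpha=\alpha$ for some $p\ge 1$, hence lies in $\ker(\omega^p-\mathrm{id})$, which is contained in the sum of the (generalized) eigenspaces of $\omega$ for eigenvalues that are roots of unity. No root of the Salem factor $Q$ is a root of unity, so this sum has dimension $\deg R\le n-4$, and therefore the span of all periodic roots has dimension at most $n-4$. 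This contradicts the lower bound $n-2$ obtained above, so at least three values occur, i.e.\ $(X,\rho^v)$ has at least three base points.

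The essential hypothesis and the restriction $n\ge 10$ enter only to guarantee $\lambda_\omega>1$, equivalently to force $\deg Q\ge 4$ via the Salem structure; after that the argument is a pure comparison of dimensions. The step deserving the most care is the containment of periodic roots in the cyclotomic part: one must verify that $\ker(\omega^p-\mathrm{id})$ meets the Salem eigenspaces trivially, which holds because a conjugate of a Salem number is never a root of unity. The count of $n-2$ independent periodic differences is routine, so I expect this eigenspace bookkeeping to be the only genuine subtlety.
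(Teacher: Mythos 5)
Your proof is correct, and its skeleton is the same as the paper's: identify the number of base points with the number of distinct values among $v\cdot e_1,\dots,v\cdot e_n$, note that at most two values would produce $n-2$ (or $n-1$) linearly independent roots orthogonal to $v$, and contradict this with an upper bound on the span of the periodic roots coming from the Salem structure of $\omega$. The one place you genuinely diverge is in how that upper bound is obtained, and your version is the more careful one. The paper argues that the eigenvectors for $\lambda$ and $1/\lambda$ span a $2$-dimensional subspace and then asserts that there can be at most $n-3$ independent nodal roots; orthogonality to a $2$-dimensional subspace by itself only yields $n-2$, which matches rather than contradicts the two-base-point count, so the stated reasoning leaves that case open. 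You instead place every periodic root inside the sum of the generalized eigenspaces for root-of-unity eigenvalues, whose dimension is $\deg R$, and use $\deg Q\ge 4$ to bound this by $n-4$ (or $n-3$ if one computes $\chi_\omega$ on all of $\mathbb{Z}^{1,n}$, of degree $n+1$); either convention is strictly below $n-2$, so the contradiction goes through in both cases. This is precisely the mechanism the paper sets up just before Proposition \ref{P:periodic} --- periodic roots lie in $S^\perp$ for $S$ spanned by \emph{all} Galois-conjugate eigenvectors, not just the two leading ones --- so your argument realizes the paper's intent while supplying the step it leaves implicit. Your bookkeeping for the containment of $\ker(\omega^p-\mathrm{id})$ in the cyclotomic part is also sound: the generalized eigenspace decomposition is $\omega^p$-invariant, $\omega^p-\mathrm{id}$ is invertible on any generalized eigenspace whose eigenvalue is not a $p$-th root of unity, and no conjugate of a Salem number is a root of unity since its minimal polynomial is irreducible and non-cyclotomic.
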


\begin{proof}Recall that the set of base points of $(X, \rho^v)$ is given by the set of distinct points in $p(v \cdot e_i - \frac{1}{3} v \cdot e_0)$. Thus, the number of distinct $v \cdot e_i$ is the number of base points. 
If there is a unique base point, then $v = \{ a, b, b, \dots, b\}$ for some $a,b \in \mathbb{C}$. Thus there are $n-1$ linearly independent nodal roots $e_i-e_{i+1}, i=1, \dots, n-1$. Similarly, if there are two distinct base points, there are $n-2$ linearly independent nodal roots. $\omega \in W_n^{ess}$ with $n\ge 10$, $\lambda(\omega)>1$. Since every zero of a Salem polynomial is simple, we see that the subspace spanned by eigenvectors corresponding $\lambda(\omega)$ and $1/\lambda(\omega)$ has dimension $2$ in $V$. Thus, there can not be more than $n-3$ linearly independent nodal roots. 
\end{proof}

  Including the trivial example, we have a rational surface automorphism with $\delta(f)=1$. Hence, let us focus on the existence of a realizable element $\omega$ with the spectral radius $\lambda>1$, that is, $\omega \in W_n^{ess}, n\ge 10$. 
Because of Theorem \ref{T:autgroup} and Proposition \ref{P:autcubic},  we have $\text{Aut}(S^v,Y^v,\phi^v) = W_n^v \cap W(S^v, \phi^v)$. 
To determine $W(S, \phi)$, let us consider two cases. 

\vspace{1ex}
\textbf{Case 1. $0 \notin v \cdot \Phi_n$}
Let $v = [v_0, v_1, \dots, v_n] \in \mathbb{C}^{1,n}$. If $0 \notin v \cdot \Phi_n$, then $v_i \ne v_j$ for all $1 \le i\ne  j \le n$. It follows that if $\alpha$ is a nodal root in $(S^v,Y^v,\phi^v)$, then $\alpha$ is a geometric nodal roots. Thus the same argument as in Theorem 6.6 in \cite{McMullen:2007} gives us the following:

\begin{thm}\cite[Theorem~6.6]{McMullen:2007}\label{T:zeronodal}
If $\alpha$ is a geometric nodal root, then $\rho^v_0(\alpha) =0$.
\end{thm}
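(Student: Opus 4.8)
The plan is to reduce the whole statement to the single identity $\alpha\cdot v=0$. Since $\alpha$ is a root we have $\deg\circ\rho^v(\alpha)=-\alpha\cdot\kappa_n=0$, so $\alpha\in\mathrm{Ker}(\deg\circ\rho^v)$ and $\rho^v_0(\alpha)$ is defined. By the explicit formula $\rho^v_0(u)=(u\cdot v)\,[p(1)-p(0)]$ from the construction, and because $[p(1)-p(0)]$ is a nonzero generator of $Pic_0(X)\cong\mathbb{C}$, we have $\rho^v_0(\alpha)=0$ if and only if $\alpha\cdot v=0$. Thus the theorem amounts to showing that every geometric nodal root is orthogonal to $v$.

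First I would invoke the Dolgachev--Ortland classification quoted after Corollary \ref{C:finite}: a geometric nodal root is either $\alpha=e_0-(e_{i_1}+e_{i_2}+e_{i_3})$ or $\alpha=2e_0-(e_{i_1}+\cdots+e_{i_6})$ with distinct indices. In the first case $\phi^v(\alpha)$ is the class of the strict transform of a genuine line $L$ through the base points $p_{i_1},p_{i_2},p_{i_3}$; in the second, of an irreducible conic $Q$ through $p_{i_1},\dots,p_{i_6}$. Because we are in Case 1 ($0\notin v\cdot\Phi_n$), all $p_i$ are distinct smooth points of $X$, so by Bezout the three (resp. six) base points constitute the \emph{entire} intersection $L\cap X$ (resp. $Q\cap X$), each occurring with multiplicity one.

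The key computation translates these incidences into a relation among the parameters. Writing a line as $aX+bY+cZ=0$ and substituting $p(t)=[1:t:t^3]$ gives $ct^3+bt+a=0$, a cubic in $t$ whose $t^2$-coefficient vanishes; hence its three roots sum to zero. Likewise a conic $q(X,Y,Z)=0$ pulls back to a degree-six polynomial in $t$ whose $t^5$-coefficient vanishes, so its six roots sum to zero. Since the base point $p_{i_j}$ carries parameter $t_{i_j}-t_0$ with $t_i=v\cdot e_i$ and $3t_0=v\cdot e_0$, the collinearity condition reads $\sum_j (t_{i_j}-t_0)=t_{i_1}+t_{i_2}+t_{i_3}-3t_0=-(\alpha\cdot v)=0$, and the conic condition reads $\sum_{j=1}^{6}(t_{i_j}-t_0)=\big(\sum_j t_{i_j}\big)-6t_0=-(\alpha\cdot v)=0$. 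In either case $\alpha\cdot v=0$, which finishes the proof.

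The only real subtlety I anticipate is justifying that the named base points account for the whole intersection with $X$: this is exactly where the Case 1 hypothesis is used, since distinctness forces the line (resp. conic) to miss the cusp $[0:0:1]$ — otherwise a cuspidal intersection would consume multiplicity and leave too few smooth points for the required three (resp. six) — and thereby guarantees the leading coefficient $c$ (resp. the $Z^2$-coefficient) is nonzero, so the Vieta relations above are valid. A cleaner but equivalent route avoids the Vieta bookkeeping entirely: restricting $\phi^v(\alpha)$ to $X$ via $r:Pic(S)\to Pic(Y)$, one has $\rho^v(e_0)=[L\cap X]=[p_{i_1}]+[p_{i_2}]+[p_{i_3}]$ because all lines are linearly equivalent in $\p^2$, whence $\rho^v(\alpha)=\rho^v(e_0)-\sum_j[p_{i_j}]=0$ directly, and similarly $2\rho^v(e_0)=[Q\cap X]$ in the conic case. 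This is the form of McMullen's original argument that the excerpt refers to.
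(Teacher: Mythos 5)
Your argument is correct and reaches the conclusion by a genuinely different route from the paper. The paper's proof is purely intersection-theoretic: a geometric nodal root is represented by an effective irreducible curve $D$ with $D\cdot Y^v=-\alpha\cdot\kappa_n=0$ and $D\not\subset Y^v$, so the effective degree-zero divisor $D\cap Y^v$ is empty and the restriction of $[D]$ to $Pic(Y^v)$ vanishes; no classification of nodal roots and no coordinates are needed. You instead reduce the statement to the single identity $\alpha\cdot v=0$ via the formula $\rho^v_0(u)=(u\cdot v)[p(1)-p(0)]$, invoke the Dolgachev--Ortland classification, and verify $\alpha\cdot v=0$ by Vieta's formulas applied to the parametrization $[1:t:t^3]$ (the vanishing of the $t^2$-, resp.\ $t^5$-, coefficient). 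This buys an explicit, checkable computation, and as a byproduct it directly proves Lemma \ref{L:nodalv}; your closing remark about restricting $\ve_0-\sum_j\ve_{i_j}$ to $X$ is essentially the paper's argument written out in the two cases. One caveat: your main computation leans on the Case 1 hypothesis $0\notin v\cdot\Phi_n$ to guarantee three (resp.\ six) \emph{distinct} smooth base points, but the theorem is also invoked in Case 2 (in the proof of Lemma \ref{L:nodalv}), where base points may coincide or be infinitely near. The Vieta argument does survive there --- an infinitely near base point on the anticanonical curve forces $L$ (resp.\ $Q$) to be tangent to $X$, so the repeated parameter is a root of the same polynomial with the correct multiplicity and the roots still sum to zero --- but you should either say this explicitly or fall back on your restriction-map variant (or the paper's disjointness argument), neither of which needs distinctness.
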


\begin{proof}
Suppose $\alpha$ is a nodal root in $(S^v,Y^v,\phi^v)$, then $\alpha$ is a geometric nodal roots . Thus there is a smooth rational curve $D$ such that $[\pi(\alpha) ]= [D]$. Since $X$ is a cuspidal cubic, the strict transform $Y^v$ is a singular rational curve. Thus $D \not\subset Y$ and $\rho^v(\alpha) = [D \cap Y] $ is degree zero, i.e. $\rho^v_0(\alpha) = 0$.
\end{proof}

\begin{thm}\label{T:nonodal}
If $0 \notin v \cdot \Phi_n$, then $W(S^v, \phi^v)= W_n$
\end{thm}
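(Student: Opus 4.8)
The plan is to show that the hypothesis $0 \notin v \cdot \Phi_n$ forces $(S^v,\phi^v)$ to have \emph{no} nodal roots at all, after which the conclusion is immediate. Indeed, by McMullen's \cite[Theorem~5.4]{McMullen:2007} (quoted above), the absence of nodal roots gives $W(S^v,\phi^v)=W_n$ directly; equivalently one may invoke Harbourne's criterion (Theorem \ref{T:harbourne}), since $\Phi(\phi^v)^+ = \emptyset$ makes the containment $\omega^{-1}\Phi(\phi^v)^+ \subset \Phi_n^+$ hold vacuously for every $\omega \in W_n$. So the entire task reduces to verifying that there are no nodal roots, and I would split this according to the two types of nodal root identified earlier.

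First I would rule out the non-geometric nodal roots, i.e. those of the form $e_i-e_j$ with $i<j$. Such a root is nodal exactly when $p_j$ is infinitely near to $p_i$, which in the present construction means the base points $p_i = p(t_i-t_0)$ and $p_j = p(t_j-t_0)$ coincide, hence $t_i=t_j$. Since $e_i-e_j \in \Phi_n$ and $0 \notin v\cdot\Phi_n$, we have $v\cdot(e_i-e_j) = t_i-t_j \ne 0$. Therefore the points $p_1,\dots,p_n$ are pairwise distinct, no infinitely near points occur, and no nodal root of the form $e_i-e_j$ can arise.

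Next I would dispose of the geometric nodal roots. Suppose toward a contradiction that $\alpha$ is a geometric nodal root. By Theorem \ref{T:zeronodal} we have $\rho^v_0(\alpha)=0$. On the other hand, the explicit formula $\rho^v_0(u) = (u\cdot v)\,[p(1)-p(0)]$, together with the fact that $[p(1)-p(0)]$ is a nonzero element of $Pic_0(X)\cong\mathbb{C}$, shows that $\rho^v_0(\alpha)=0$ is equivalent to $\alpha\cdot v = 0$. Since $\alpha\in\Phi_n$, this again contradicts $0\notin v\cdot\Phi_n$. Hence no geometric nodal root exists either, so $\Phi(\phi^v)^+=\emptyset$ and the theorem follows. (Alternatively, one could treat both types uniformly through Proposition \ref{P:rhozeronodal}, which already records $\rho_0(\alpha)=0$ for every nodal root.)

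I do not expect a serious obstacle here, as every ingredient is already in place; the only point requiring care is the clean separation of the two kinds of nodal root and the bookkeeping that, under the chosen identification $Pic_0(X)\cong\mathbb{C}$ with unit $[p(1)-p(0)]$, the vanishing $\rho^v_0(\alpha)=0$ is exactly the pairing condition $\alpha\cdot v=0$. Once this dictionary is fixed, the hypothesis $0\notin v\cdot\Phi_n$ translates directly into the absence of nodal roots, and the theorem drops out.
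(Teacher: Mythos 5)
Your proposal is correct and follows essentially the same route as the paper: non-geometric nodal roots $e_i-e_j$ are excluded because $v\cdot(e_i-e_j)\ne 0$ forces the base points to be distinct, and geometric nodal roots are excluded by combining Theorem \ref{T:zeronodal} with the formula $\rho^v_0(u)=(u\cdot v)[p(1)-p(0)]$, after which Theorem \ref{T:harbourne} (vacuously) gives $W(S^v,\phi^v)=W_n$. The only cosmetic difference is that the paper records the first step as a remark preceding the theorem rather than inside the proof.
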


\begin{proof}
Since $0 \not\in v \cdot \Phi_n$, we have $ 0 \ne \rho^v_0(u) = (u \cdot v) [ p(1)-p(0)]$ for all $u \in L_n$. Suppose $\alpha$ is a nodal root in $(S^v,Y^v,\phi^v)$, then $\alpha$ is a geometric nodal roots . Thus, by Theorem \ref{T:zeronodal}, we see that there are no nodal roots.
Since there is no nodal roots, by Theorem \ref{T:harbourne}, we see that $W_n = W(S^v, \phi^v)$.
\end{proof}

\vspace{1ex}
\textbf{Case 2. $0 \in v \cdot \Phi_n$}
Again let $v = [v_0, v_1, \dots, v_n] \in \mathbb{C}^{1,n}$. 
If there are no nodal roots, then again, by Theorem \ref{T:harbourne}, we see that $W_n = W(S^v, \phi^v)$. If $\alpha$ is a nodal root, there are two cases. If  $\alpha= e_i - e_j$ then we have $\phi^v(e_j) \rho^v(e_j)= \phi^v(e_i)= \rho^v(e_i)$ and thus $v_i = v_j$. It follows that $\alpha \cdot v = 0$. If $\alpha$ is a geometric nodal root, then by Theorem \ref{T:zeronodal}, we have $\alpha \cdot v =0$. Thus we have
\begin{lem}\label{L:nodalv}
If $\alpha$ is a nodal root, then  $\alpha \cdot v =0$.
\end{lem}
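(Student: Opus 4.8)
The plan is to reduce the statement to the explicit description $\rho^v_0(u) = (u \cdot v)\,[p(1)-p(0)]$ combined with the vanishing already established for nodal roots. First I would note that any nodal root $\alpha$ is in particular a root, so $\alpha \cdot \kappa_n = 0$ and hence $\deg(\rho^v(\alpha)) = -\alpha\cdot\kappa_n = 0$. This places $\alpha$ in the domain $\ker(\deg\circ\rho^v)$ on which $\rho^v_0$ is defined, so that the formula $\rho^v_0(\alpha) = (\alpha\cdot v)\,[p(1)-p(0)]$ applies to it.

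Next I would establish $\rho^v_0(\alpha) = 0$. This is exactly the content of Proposition \ref{P:rhozeronodal}, but since here $0 \in v\cdot\Phi_n$ it is transparent to split into the two types of nodal root. If $\alpha = e_i - e_j$ with $i<j$, then $\phi^v(e_i)$ and $\phi^v(e_j)$ are classes of exceptional curves over the same point, so $\rho^v(e_i)=\rho^v(e_j)$, i.e. $[p_i]=[p_j]$; as the parametrization $t \mapsto p(t)$ is injective on the smooth locus, this forces $t_i = t_j$, hence $v\cdot e_i = v\cdot e_j$ and $\alpha\cdot v = 0$ directly. If instead $\alpha$ is a geometric nodal root, then Theorem \ref{T:zeronodal} gives $\rho^v_0(\alpha) = 0$.

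Finally I would pass from $\rho^v_0(\alpha) = 0$ to $\alpha \cdot v = 0$. The key point is that $[p(1)-p(0)]$ is a \emph{nonzero} element of $Pic_0(X)\cong\mathbb{C}$ (under the group-law identification it corresponds to $1-0=1$); since $\mathbb{C}$ has no zero divisors, the equation $(\alpha\cdot v)\,[p(1)-p(0)] = 0$ forces the scalar $\alpha\cdot v$ to vanish.

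I do not expect a serious obstacle, since the real geometric substance has already been front-loaded into Theorem \ref{T:zeronodal} (the assertion that a curve representing a geometric nodal root is not contained in the singular anti-canonical curve $Y^v$, so it meets $Y^v$ in a degree-zero class). The only points requiring care are verifying that $\rho^v_0$ is genuinely defined on $\alpha$, which is the orthogonality $\alpha\cdot\kappa_n=0$ recorded above, and recording that $[p(1)-p(0)]\neq 0$ so that the conclusion is not vacuous.
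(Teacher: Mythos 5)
Your argument is correct and follows essentially the same route as the paper: a case split into non-geometric nodal roots $e_i-e_j$ (handled directly via $v_i=v_j$) and geometric nodal roots (handled via Theorem \ref{T:zeronodal} and the formula $\rho^v_0(u)=(u\cdot v)[p(1)-p(0)]$). Your version merely makes explicit two points the paper leaves implicit, namely that $\alpha\cdot\kappa_n=0$ puts $\alpha$ in the domain of $\rho^v_0$ and that $[p(1)-p(0)]\neq 0$ in $Pic_0(X)\cong\mathbb{C}$.
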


Since the curve $Y^v$ is the unique anti-canonical curve, the set of nodal roots is finite. Thus, by Harbourne \cite{Harbourne:1985}, it is easy to check if $\omega$ is realizable.
\begin{thm}\label{T:check}
Suppose $0 \in v \cdot \Phi_n$ and $\omega \in W_n^v$. Then $\omega\in Aut (S^v, Y^v \phi^v)$ if and only if there is no nodal root $\alpha \in \Phi(\phi)^+$ such that $\omega^{-1} \alpha \in \Phi^-$.
\end{thm}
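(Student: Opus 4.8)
The plan is to obtain the equivalence by assembling the structural description of $\text{Aut}(S^v, Y^v, \phi^v)$ with Harbourne's root-theoretic membership criterion for $W(S^v, \phi^v)$. First I would invoke Theorem \ref{T:autgroup}, applied to the marked pair $(S^v, Y^v, \phi^v) = \text{B}\ell(X, \rho^v)$, to write
\[ \text{Aut}(S^v, Y^v, \phi^v) \ = \ \text{Aut}(X, \rho^v) \cap W(S^v, \phi^v). \]
Proposition \ref{P:autcubic} identifies the first factor, $\text{Aut}(X, \rho^v) = W_n^v$, which yields the clean factorization
\[ \text{Aut}(S^v, Y^v, \phi^v) \ = \ W_n^v \cap W(S^v, \phi^v). \]
Since the standing hypothesis is that $\omega \in W_n^v$, the first factor is automatic, and the entire question reduces to deciding whether $\omega \in W(S^v, \phi^v)$.

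For that reduction I would appeal to Harbourne's theorem (Theorem \ref{T:harbourne}), which characterizes $W(S^v, \phi^v)$ as precisely the set of $\omega$ satisfying $\omega^{-1}\Phi(\phi^v)^+ \subset \Phi_n^+$. Unwinding this inclusion, it fails exactly when some nodal root $\alpha \in \Phi(\phi^v)^+$ is sent by $\omega^{-1}$ outside $\Phi_n^+$; and because the root system splits as $\Phi_n = \Phi_n^+ \cup \Phi_n^-$, being outside $\Phi_n^+$ is the same as lying in $\Phi_n^-$. Hence $\omega \in W(S^v, \phi^v)$ if and only if there is no nodal root $\alpha \in \Phi(\phi^v)^+$ with $\omega^{-1}\alpha \in \Phi_n^-$, which is exactly the stated condition. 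Combining this with the factorization above proves the theorem.

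I would also record that the hypothesis $0 \in v \cdot \Phi_n$ serves only to place us in the case where nodal roots may actually occur: in the complementary situation $0 \notin v \cdot \Phi_n$, Theorem \ref{T:nonodal} gives $W(S^v, \phi^v) = W_n$, the "no nodal root" condition is vacuously true, and the equivalence holds trivially. The genuine content of the theorem is computational rather than conceptual — by Corollary \ref{C:finite} the presence of a unique irreducible reduced anti-canonical curve $Y^v$ forces only finitely many nodal roots, so the test "$\omega^{-1}\alpha \in \Phi_n^-$ for some nodal $\alpha$" runs over a finite list and realizability becomes effectively decidable.

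I do not expect a substantive obstacle, since every ingredient is already established; the only care required is bookkeeping. Specifically, I must make sure $\rho^v$ is the marked cubic genuinely attached to $(S^v, Y^v, \phi^v)$, which is guaranteed by the defining equality $(S^v, Y^v, \phi^v) = \text{B}\ell(X, \rho^v)$, and I must handle the quantifier negation correctly when passing from Harbourne's inclusion $\omega^{-1}\Phi(\phi^v)^+ \subset \Phi_n^+$ to the "there is no nodal root" phrasing, which is where the disjointness $\Phi_n = \Phi_n^+ \cup \Phi_n^-$ is used.
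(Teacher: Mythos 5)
Your proposal is correct and follows exactly the paper's route: the paper's own proof simply cites Theorem \ref{T:autgroup} and Theorem \ref{T:harbourne}, and your argument spells out precisely that combination (with Proposition \ref{P:autcubic} supplying $\text{Aut}(X,\rho^v)=W_n^v$ so the condition reduces to membership in $W(S^v,\phi^v)$). The extra remarks on the role of the hypothesis $0\in v\cdot\Phi_n$ and on finiteness of the nodal-root check are accurate but not needed.
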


\begin{proof}

This is an immediate consequence of Theorem \ref{T:harbourne} and Theorem \ref{T:autgroup}.
\end{proof}

\begin{thm} \label{T:withnodal2}
If $0 \in v \cdot \Phi_n$ and $W_n^v \ne \emptyset$, there is $\omega \in W_n^v \cap W(S^v, \phi^v)$.
\end{thm}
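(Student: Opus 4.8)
The plan is to start from an arbitrary element of $W_n^v$ and to \emph{correct} it by a product of reflections through nodal roots so that the corrected element satisfies Harbourne's criterion (Theorem \ref{T:harbourne}) for membership in $W(S^v,\phi^v)$, while the correction leaves the eigenvector $[v]$ untouched. The two tools that make this work are Proposition \ref{P:preservingpositivity}, which produces the required correction inside $\Sigma_\phi$, and Lemma \ref{L:nodalv}, which guarantees that corrections by such reflections are compatible with remaining in $W_n^v$.

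First I would record the crucial observation that uses the standing hypothesis. Since $0\in v\cdot\Phi_n$, Lemma \ref{L:nodalv} shows that every nodal root $\alpha\in\Phi(\phi)^+$ satisfies $\alpha\cdot v=0$. Consequently the reflection $s_\alpha(x)=x+(x\cdot\alpha)\alpha$ fixes $v$, and therefore every element $s\in\Sigma_\phi$, being a product of such reflections, also fixes $v$. As each element of $W_n$ fixes $\kappa_n$, such an $s$ descends to the quotient $\mathbb{C}^{1,n}/\mathbb{C}\kappa_n$ and fixes the class $[v]$ there.

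Next I would carry out the correction. Choose any $\omega\in W_n^v$, which exists by hypothesis. Applying Proposition \ref{P:preservingpositivity} to $\omega^{-1}$ yields $s\in\Sigma_\phi$ with $\omega^{-1}\circ s(\alpha)\in\pos$ for every nodal root $\alpha$. Set $\omega'=s^{-1}\omega$, so that $(\omega')^{-1}=\omega^{-1}s$ sends $\Phi(\phi)^+$ into $\Phi_n^+$; by Theorem \ref{T:harbourne} this gives $\omega'\in W(S^v,\phi^v)$. It remains to check $\omega'\in W_n^v$: writing $\omega[v]=\lambda[v]$ and using that $s$, hence $s^{-1}$, fixes $[v]$, we obtain $\omega'[v]=s^{-1}\omega[v]=\lambda\,s^{-1}[v]=\lambda[v]$, so $[v]$ is again an eigenvector. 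Thus $\omega'\in W_n^v\cap W(S^v,\phi^v)$, as desired.

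I expect the only real subtlety to be the compatibility between the two constraints: the correcting factor $s$ must restore positivity of all nodal roots (for Harbourne's criterion) \emph{without} disturbing the eigenvector condition (for membership in $W_n^v$). This is precisely what the orthogonality $\alpha\cdot v=0$ provides, which is where the hypothesis $0\in v\cdot\Phi_n$ enters essentially; once that observation is in place, the argument is a direct composition of Proposition \ref{P:preservingpositivity} and Theorem \ref{T:harbourne}.
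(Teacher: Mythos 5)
Your proposal is correct and follows essentially the same route as the paper: apply Proposition \ref{P:preservingpositivity} to $\omega^{-1}$ to obtain the correcting factor $s\in\Sigma_\phi$, invoke Theorem \ref{T:harbourne} to place $s^{-1}\omega$ in $W(S^v,\phi^v)$, and use Lemma \ref{L:nodalv} (every nodal root is orthogonal to $v$, so $s$ fixes $[v]$) to keep $s^{-1}\omega$ in $W_n^v$. The only cosmetic difference is that the paper additionally notes via Proposition \ref{P:periodic} that the nodal roots are periodic, which your argument does not need.
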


\begin{proof}
Suppose $\omega \in W_n^v$. If $\alpha$ is a nodal root, then by Lemma \ref{L:nodalv} and Proposition \ref{P:periodic}, we see that $\alpha$ is a periodic root. Since $X$ is irreducible and the spectral radius $\lambda_\omega>1$,
from Proposition \ref{P:preservingpositivity}, we know that there is $s \in \Sigma$ such that $(\omega^{-1}  \circ s) \alpha$ is a positive root for every nodal root $\alpha$ where $\Sigma$ is a span of all reflections through nodal roots for $\omega$. In other words, $(\omega^{-1}  \circ s) ( \Phi(\phi)^+) \subset \Phi_n^+$. Thus by Theorem \ref{T:harbourne} and Theorem \ref{T:check}, we see that $ s^{-1} \circ \omega \in W(S^v, \phi^v)$.
Now since any nodal root $\alpha$ satisfies $\alpha \cdot v = 0$, thus $s_\alpha v = v$ for all reflection $s_\alpha$ through a nodal root $\alpha$ and 
\[  s^{-1} \circ \omega (v) =  s^{-1} (\lambda v) = \lambda v.  \]
Thus $ s^{-1} \circ \omega \in W_n^v \cap W(S^v, \phi^v)$.
\end{proof}

Let the Weyl spectrum \[ \Lambda = \cup_n\{ \lambda_\omega : w \in W_n\}\] where $\lambda_\omega$ is the spectral radius of $\omega$. By Lemma \ref{L:essentialdegree}, we see that $\Lambda =  \cup_n\{ \lambda_\omega : w \in W^{ess}_n\}$. Thus we have

\begin{proof}[Proof of Theorem A]
Suppose $\lambda \in \Lambda$. If $\lambda>1$, then there is $\omega\in W^{ess}_n$ for some $n\ge 10$ such that $\lambda_\omega = \lambda$. Let $v$ be the eigenvector of $\omega$ corresponding to $\lambda$.
 By Theorem \ref{T:nonodal} and Theorem \ref{T:withnodal2} we have $\omega' \in \text{Aut}(S^v,Y^v, \phi^v)$ and thus there is an automorphism on $S^v$ such that $\phi^v \circ F_* = \omega' \circ \phi^v$ and $\delta(F) = \lambda$. For $\lambda =1$, since the Cremona involution $s_0 s_1s_2 \in W_3$ lifts to an automorphism $F$ and $\lambda_{s_0 s_1s_2} =1$, we have the desired result. 
\end{proof}

\begin{proof}[Proof of Theorem B]
Suppose an essential element $\omega \in \cup_{n\ge 10} W_n$ with the spectral radius $\lambda= \lambda(\omega)$ and$\omega v= \lambda v$. Then $\omega \in W_n^v$. If no nodal roots are mapped to negative roots by $\omega^{-1}$, by Theorem \ref{T:nonodal} -- \ref{T:check} together with Theorem \ref{T:autgroup}, we see that $\omega$ is realized by an automorphism on $(S^v,Y^v, \phi^v)$. On the other hand, Suppose there is a nodal root $\alpha$ such that  $\omega^{-1} \alpha \in \Phi^-_n$ and let $C_\alpha$ be a projection of $\alpha$ on $S^v$. If $\omega$ is realized by an automorphism $f$ on $(S^v,Y^v, \phi^v)$, then the class $[f^{-1} C_\alpha]= f^{-1}_* \alpha$ is not effective which is impossible. Thus, we have the first assertion. \hfill\break
In case $\omega$ is not realizable, then by Lemmas  \ref{L:nodal0} -- \ref{L:nodaln}, we can find $s \in \Sigma_v$ such that  $s\circ \omega \in W_n^v$ and $s \circ \omega$ maps no nodal roots to negative roots. Notice that this can be done in finitely many steps because there are finitely many nodal roots in $(S^v,Y^v, \phi^v)$. Thus, by Theorems \ref{T:nonodal} -- \ref{T:withnodal2}, we see that $s \circ \omega$ is realizable. 
\end{proof}

\section{Quadratic Birational Maps}\label{S:quad}
Let $(S,Y,\phi) = \text{B}\ell(X, \rho)$ be a marked blowup with the unique anti-canonical curve $Y$ such that $\pi Y= X$. Then by Diller \cite{Diller:2011} we have the following

\begin{thm}\cite{Diller:2011}\label{T:invariant} Suppose $F:S \to S$ is an automorphism stabilizing $Y$ and $\delta(F) >1$, then $X$ is either a cuspidal cubic, a conic with its tangent, or three lines joining at a single point. 
\end{thm}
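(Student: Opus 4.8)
The plan is to restrict $F$ to the anti-canonical curve and read off what the condition $\delta(F)>1$ forces on the group structure of the smooth locus of $X=\pi Y$. First I would produce the invariant two-form: since $[Y]=-\kappa_S$, the bundle $K_S\otimes\mathcal O(Y)$ is trivial, so $S$ carries a meromorphic two-form $\eta$, unique up to scale, with divisor exactly $-Y$ (a simple pole along $Y$ and no zeros). Because $F$ stabilizes $Y$ and the space of such forms is one-dimensional, $F^*\eta=c\,\eta$ for a constant $c$; evaluating at a fixed point $p\in S\setminus Y$, where $\eta$ is a nonvanishing holomorphic two-form, gives $c=\det DF_p=\delta(F)$, matching the definition used before Theorem \ref{T:determinant}. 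Taking the Poincar\'e residue of $\eta$ along $Y$ yields a generator $\theta$ of $H^0(Y,\omega_Y)\cong\mathbb{C}$ (the dualizing sheaf is trivial as $p_a(Y)=1$), and by naturality of the residue the same identity descends to $(F|_Y)^*\theta=\delta(F)\,\theta$. Thus $\delta(F)$ is precisely the multiplier with which $f:=F|_Y$ scales the invariant differential on $Y$, in agreement with the formula $f|_X(t)=\delta(F)t+b$ of Theorem \ref{T:determinant}. Since the base points lie in the smooth locus $X^*$, the strict transform map $\pi|_Y:Y\to X$ is an isomorphism, so I may work with $X$ and $Y$ interchangeably.

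Next I would pass to the generalized Jacobian $J=\mathrm{Pic}^0(Y)$, a connected commutative algebraic group of dimension $h^1(\mathcal O_Y)=p_a(Y)=1$, whose cotangent space at the identity is spanned by $\theta$. The automorphism $f$ induces an automorphism of $J$ whose derivative at the identity is multiplication by $\delta(F)^{\pm1}$. I then invoke the classification of one-dimensional connected commutative algebraic groups: $J$ is an elliptic curve, $\mathbb{G}_m$, or $\mathbb{G}_a$. If $J$ is an elliptic curve then $\mathrm{Aut}(J)$ is finite and every multiplier is a root of unity; if $J=\mathbb{G}_m$ then every variety automorphism is $z\mapsto cz^{\pm1}$, whose effect on $dz/z$ is multiplication by $\pm1$. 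In both cases $|\delta(F)|=1$, contradicting $\delta(F)>1$. Hence $J=\mathbb{G}_a$, the additive group, for which $z\mapsto az+b$ realizes an arbitrary multiplier $a\in\mathbb{C}^*$.

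It remains to list the reduced plane cubics $X$ whose generalized Jacobian is additive. Running through the possibilities: a smooth cubic has $J$ an elliptic curve; an irreducible singular cubic is nodal, with $J=\mathbb{G}_m$, or cuspidal, with $J=\mathbb{G}_a$; a conic together with a line is either transversal, with two nodes and $J=\mathbb{G}_m$, or tangent, with a tacnode and $J=\mathbb{G}_a$; and three distinct lines form either a triangle, with three nodes and $J=\mathbb{G}_m$, or a concurrent pencil, with an ordinary triple point and $J=\mathbb{G}_a$. Non-reduced cubics are excluded because the anti-canonical curve $Y$ is reduced. The additive cases are exactly the cuspidal cubic, the conic with its tangent, and the three concurrent lines, which is the assertion.

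The main obstacle is this last step: deciding, for each singular type, whether the identity component of $\mathrm{Pic}^0$ is multiplicative or additive. The governing principle is that a node contributes a $\mathbb{G}_m$-factor to the generalized Jacobian while a cusp, a tacnode, or an ordinary triple point contributes a $\mathbb{G}_a$-factor; each additive case is obtained from its multiplicative neighbour by colliding nodes into a single worse singularity, a degeneration under which $\mathbb{G}_m$ specializes to $\mathbb{G}_a$. I would confirm the three additive cases directly by exhibiting parametrizations of the smooth locus by $(\mathbb{C},+)$ — for the cuspidal cubic this is the map $t\mapsto[1:t:t^3]$ already used in the paper, and entirely analogous rational parametrizations handle the conic-plus-tangent and the three concurrent lines — so that an automorphism restricts to $t\mapsto\delta(F)\,t+b$, exactly as required.
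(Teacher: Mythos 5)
Your argument is correct, but it takes a different route from the paper: the paper's proof of Theorem \ref{T:invariant} is a two-line reduction, citing Diller's result that for any reduced cubic $X$ not on the list the determinant $D(f|_X)$ has modulus $1$, and then invoking Theorem \ref{T:determinant} to get the contradiction $D(f|_X)=\delta(F)>1$. What you have done is reconstitute the proof of the cited fact itself: triviality of $K_S\otimes\mathcal{O}(Y)$ gives the invariant two-form with $F^*\eta=\delta(F)\eta$, the Poincar\'e residue transports this multiplier to the generator of $H^0(Y,\omega_Y)$ (this step is exactly Theorem \ref{T:determinant}, i.e.\ McMullen's Theorem 6.5, re-derived), and the classification of one-dimensional connected commutative algebraic groups forces $\mathrm{Pic}^0(Y)\cong\mathbb{G}_a$, which singles out the three additive cubic types. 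This buys a self-contained proof where the paper has only a citation, at the cost of needing the generalized Jacobian machinery. Two small points to tighten: (i) the ``governing principle'' in your last paragraph is stated loosely --- the local contribution of a tacnode is $\mathbb{G}_m\times\mathbb{G}_a$ and that of an ordinary triple point is $\mathbb{G}_m^2\times\mathbb{G}_a$, and it is only after quotienting by the global units of the normalization that the multiplicative factors cancel and $\mathrm{Pic}^0\cong\mathbb{G}_a$ survives; your proposed direct verification via explicit $(\mathbb{C},+)$-parametrizations does patch this, though for the reducible types the smooth locus is disconnected and the parametrization is component-by-component. (ii) The claim that $\pi|_Y:Y\to X$ is an isomorphism uses that every center of blowup (including the infinitely near ones) is a smooth point of the relevant strict transform of $X$; this is forced by $[Y]=-\kappa_S$, since blowing up a singular point of the cubic would introduce a coefficient $\ge 2$ in the expression of $[Y]$, but it is worth saying.
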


\begin{proof}
Suppose the automorphism $F$ covers a birational map$f$ on $\mathbb{P}^2$. 
In \cite{Diller:2011}, Diller showed that if $X$ is not one of three kinds in the statement of Theorem, then the determinant, $D(f|_X)$ of $f$, has modulus $1$. However by Theorem \ref{T:determinant}, we have $D(f|_X) = \delta(F) >1.$
\end{proof}

\subsection{Cremona Involution}
Suppose $i,j,k$ are three distinct integers between $1$ and $n$. The Cremona involution $\kappa_{i,j,k} \in W_n$ is given by  the reflection through the vector $\alpha_{i,j,k} = e_0 - e_i-e_j-e_k$ 
\[ \kappa_{i,j,k} (x) =  x + (x \cdot \alpha_{i,j,k} ) \, \alpha_{i,j,k}. \] Recall that the reflection $s_i$ is given by 
\[ s_i(x) = x +( x \cdot \alpha_i) \alpha_i, \ \quad \alpha_i = e_i - e_{i+1}. \] Let us denote $\Sigma_n \subset W_n$ a subspace spanned by reflections $s_i$ for $i=1, \dots, n-1$

\subsection{Quadratic Birational Maps}
Suppose $i,j, k$ are three distinct integers in $\{1, \dots, n\}$ and $s \in \Sigma_n$. And let \[\omega = \kappa_{i,j,k} \circ s \in W_n^{ess}, \qquad  n\ge 10. \] 
Suppose there exists a marked blowup $(S,Y, \phi) = B\ell(X, \rho)$ such that $\omega$ is realized by $F \in Aut(S,Y)$, i.e., $ \omega = F_* $ through the isomorphism $\phi$. Then 
\begin{itemize}
\item there exists a set of (possibly infinitely) points $p_i \in X'$ in the smooth locus of $X$ such that \[\pi \mathcal{E}_i =p_i \qquad \text{where} \qquad\phi(e_i) = [ \mathcal{E}_i], \]
\item there exists a quadratic birational map $f: \p^2 \dasharrow \p^2$ such that \[ \pi \circ F = \pi \circ f, \qquad \text{ and}\] 
\item the determinant of $f|_X = \delta(F) = \lambda$ where $\lambda$ is the spectral radius of $\omega>1$, i.e. \[ f|_X(t) = \lambda t +b,\qquad t \in \mathbb{C}.\]
\end{itemize}
Since $\omega$ is a composition of exactly one Cremona involution and a reflection in $\Sigma_n$, both $f$ and $f^{-1}$ have exactly three distinct (but possibly infinitely near) points of indeterminacy. 
\begin{lem}
The sets of indeterminacy points for $f$ and $f^{-1}$ are\[ \text{Ind}(f) = \{ p_{i'}, p_{j'},p_{k'}\}, \qquad \text{and} \qquad \text{Ind}(f^{-1}) =   \{ p_i, p_j,p_k\}\] where $s(e_{i'}) = e_i, s(e_{j'}) = e_j, s(e_{k'}) = e_k$.
\end{lem}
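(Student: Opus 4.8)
The plan is to translate everything into the action of $\omega$ and $\omega^{-1}$ on the class $e_0$, and then read off the indeterminacy loci from the elementary fact that an irreducible curve $D\subset S$ is contracted by $\pi$ precisely when $[D]\cdot\phi(e_0)=0$.

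First I would record how $\Sigma_n$ acts on the geometric basis. Each generator $s_\ell$ of $\Sigma_n$ is the transposition $e_\ell\leftrightarrow e_{\ell+1}$ fixing $e_0$ (indeed $e_\ell\cdot\alpha_\ell=-1$ gives $s_\ell(e_\ell)=e_{\ell+1}$), so $\Sigma_n$ acts as the symmetric group permuting $\{e_1,\dots,e_n\}$ while fixing $e_0$. In particular $s(e_0)=e_0$, and the indices $i',j',k'$ defined by $s(e_{i'})=e_i$, $s(e_{j'})=e_j$, $s(e_{k'})=e_k$ satisfy $s^{-1}(e_i)=e_{i'}$, and so on. Consequently, using $\kappa_{i,j,k}(e_0)=2e_0-e_i-e_j-e_k$ and that $\kappa_{i,j,k}$ is an involution,
\[ \omega(e_0)=\kappa_{i,j,k}(s(e_0))=2e_0-e_i-e_j-e_k,\qquad \omega^{-1}(e_0)=s^{-1}(2e_0-e_i-e_j-e_k)=2e_0-e_{i'}-e_{j'}-e_{k'}. \]

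Next comes the geometric identification. With $f$ the descended birational map, $f\circ\pi=\pi\circ F$, so $f=\pi\circ F\circ\pi^{-1}$; the exceptional curve $\mathcal{E}_\ell$ over $p_\ell$ is carried by $F$ to a curve of class $F_*\phi(e_\ell)=\phi(\omega e_\ell)$, and $f$ fails to extend over $p_\ell$ exactly when this image is not $\pi$-contracted, i.e. when $\phi(\omega e_\ell)\cdot\phi(e_0)=\omega e_\ell\cdot e_0=e_\ell\cdot\omega^{-1}(e_0)\neq0$. By the computation above this happens iff $\ell\in\{i',j',k'\}$; even more concretely, $\omega(e_{i'})=\kappa_{i,j,k}(e_i)=e_0-e_j-e_k$, so $F(\mathcal{E}_{i'})$ is the strict transform of the line through $p_j$ and $p_k$ and $f$ genuinely blows $p_{i'}$ up to that line. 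Since $f$ has exactly three indeterminacy points, this forces $\text{Ind}(f)=\{p_{i'},p_{j'},p_{k'}\}$. Applying the identical argument to $f^{-1}=\pi\circ F^{-1}\circ\pi^{-1}$, now with $(F^{-1})_*\phi(e_\ell)=\phi(\omega^{-1}e_\ell)$ and the criterion $e_\ell\cdot\omega(e_0)\neq0$, gives $\text{Ind}(f^{-1})=\{p_i,p_j,p_k\}$, where $\omega^{-1}(e_i)=e_0-e_{j'}-e_{k'}$ exhibits the corresponding contracted line.

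The main obstacle is the bookkeeping forced by infinitely near base points. When $p_\ell$ is infinitely near, $\mathcal{E}_\ell$ is the reducible total transform — a chain of $(-2)$-curves capped by a $(-1)$-curve — rather than a single $(-1)$-curve, so I must check that the scalar test $\omega e_\ell\cdot e_0\neq0$ still faithfully detects genuine indeterminacy of $f$ at the corresponding (possibly infinitely near) point, and that the three resulting points are matched to $p_i,p_j,p_k$ (resp. $p_{i'},p_{j'},p_{k'}$) as points carrying their infinitely near structure. This reduces to confirming that $\pi$-contraction of an $F$-image is read off from the $\phi(e_0)$-intersection of its class even for these towers, which, combined with the already-granted fact that $f$ and $f^{-1}$ each have exactly three indeterminacy points, closes the argument.
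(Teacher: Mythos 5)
Your argument is correct and follows essentially the same route as the paper's proof: both reduce to the observation that indeterminacy points of $f$ must be base points (since $F$ is an automorphism) and then test $\omega e_\ell\cdot e_0\neq 0$, which you equivalently compute as $e_\ell\cdot\omega^{-1}(e_0)$ with $\omega^{-1}(e_0)=2e_0-e_{i'}-e_{j'}-e_{k'}$. Your closing remark on verifying the criterion for reducible total transforms over infinitely near points is a point the paper passes over silently, but it does not change the substance of the argument.
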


\begin{proof}
Since $F$ is an automorphism if  $p \in \mathbb{P}^2$ is a point of indeterminacy for $f$, $p$ must be a base point for the marked blowup $(S, Y, \phi) = B\ell(X, \rho)$. 
Suppose $p= p_\ell$ for some $\ell = 1, \dots, n$. Since $F_* e_\ell = \omega e_\ell$ and $F_* e_\ell \cdot e_0 \ne 0$ if $s(e_\ell) \in \{ e_i, e_j, e_k \}$. Thus we have $\text{Ind}(f) = \{ p_{i'}, p_{j'},p_{k'}\}$ with $s(e_{i'}) = e_i, s(e_{j'}) = e_j, s(e_{k'}) = e_k$. With the similar argument, we have $\text{Ind}(f^{-1}) =   \{ p_i, p_j,p_k\}$.
\end{proof}

\subsection{Orbit Data} \label{SS:orbitdata}

Since $\omega \in W_n^{ess}$ for each $\ell = 1, \dots, n$, there is a smallest positive integer $n_\ell$ such that $\omega^{n_\ell} e_\ell \cdot e_0 >0 $, in other words \[ \omega^{n_\ell-1} e_\ell \in \{ e_{i'}, e_{j'}, e_{k'} \}. \]

\begin{lem}\label{L:perm}
There is a permutation $\sigma$ of $\{i,j,k\}$ such that \[ \omega^{n_s} e_s = e_{\sigma(s)'}, \qquad s \in \{i,j,k\}. \] 
\end{lem}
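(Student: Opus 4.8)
The plan is to read off the action of $\omega$ and of $\omega^{-1}$ explicitly on the basis $e_1,\dots,e_n$ and to track exactly when the $e_0$-coordinate of an orbit first becomes positive. Since $s\in\Sigma_n$ merely permutes $\{e_1,\dots,e_n\}$ and fixes $e_0$, while $\kappa_{i,j,k}$ fixes every $e_\ell$ with $\ell\notin\{i,j,k\}$ and sends $e_i\mapsto e_0-e_j-e_k$ (and cyclically), a one-line computation yields the two dichotomies
\[ \omega(e_\ell)\cdot e_0>0 \iff \ell\in\{i',j',k'\}, \qquad \omega^{-1}(e_\ell)\cdot e_0>0 \iff \ell\in\{i,j,k\}, \]
and in every remaining case $\omega(e_\ell)$ (resp.\ $\omega^{-1}(e_\ell)$) is again one of the basis vectors $e_1,\dots,e_n$. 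Consequently the forward $\omega$-orbit of $e_\ell$ runs through basis vectors $e_{m_0}=e_\ell,e_{m_1},\dots$ with $m_{t+1}=s(m_t)$, all of $e_0$-coordinate zero, until the first index lands in $\{i',j',k'\}$; by the definition of $n_\ell$ this happens at step $n_\ell-1$, so $\omega^{n_\ell-1}e_\ell=e_{m_{n_\ell-1}}$ with $m_{n_\ell-1}\in\{i',j',k'\}$.

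Next I would produce the candidate permutation. For $s\in\{i,j,k\}$ the integer $n_s$ exists because $\omega$ is essential (the orbit of $e_s$ cannot remain among the finitely many basis vectors forever, else $e_s$ would be a periodic root). Writing the first return as $\omega^{n_s-1}e_s=e_{m_s}$ with $m_s\in\{i',j',k'\}$, and recalling that $\ell\mapsto\ell'$ is the bijection $s^{-1}$ carrying $\{i,j,k\}$ onto $\{i',j',k'\}$, I define $\sigma(s)\in\{i,j,k\}$ by $m_s=\sigma(s)'$. This is a self-map of the three-element set $\{i,j,k\}$, and it is precisely the $e_{\sigma(s)'}$ appearing in the statement; the content of the lemma is that $\sigma$ is a bijection.

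The main step — the only part that is not bookkeeping — is injectivity of $\sigma$, i.e.\ that two distinct orbits cannot reach the same point of $\{e_{i'},e_{j'},e_{k'}\}$. Suppose $\sigma(s_1)=\sigma(s_2)$, so that $\omega^{n_{s_1}-1}e_{s_1}=\omega^{n_{s_2}-1}e_{s_2}$; after relabelling assume $n_{s_1}\le n_{s_2}$ and put $d=n_{s_2}-n_{s_1}\ge 0$, whence $e_{s_1}=\omega^{d}e_{s_2}$. If $d>0$, apply $\omega^{-1}$ to obtain $\omega^{-1}e_{s_1}=\omega^{d-1}e_{s_2}$; since $n_{s_1}\ge 1$ gives $d-1\le n_{s_2}-2$, this element still lies in the basis-vector regime of the orbit of $e_{s_2}$ and therefore has zero $e_0$-coordinate. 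But $s_1\in\{i,j,k\}$, so the backward dichotomy forces $\omega^{-1}e_{s_1}\cdot e_0>0$, a contradiction. Hence $d=0$ and $s_1=s_2$. An injective self-map of a finite set is a bijection, so $\sigma$ is a permutation of $\{i,j,k\}$, as claimed.

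I expect the injectivity argument to be the crux: it is exactly where invertibility of $\omega$ together with the clean backward characterization $\omega^{-1}(e_\ell)\cdot e_0>0\iff\ell\in\{i,j,k\}$ does the work, ruling out orbit collisions before the indeterminacy locus is reached. The preliminary computation of $\omega$ and $\omega^{-1}$ on the basis is routine once the explicit form of $\kappa_{i,j,k}$ and of the permutation $s$ are in hand, and essentiality of $\omega$ enters only to guarantee that the return times $n_s$ are finite.
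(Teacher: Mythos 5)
Your proof is correct and follows essentially the same route as the paper's: both reduce the lemma to injectivity of $s\mapsto\omega^{n_s-1}e_s$ on $\{i,j,k\}$, and both derive a contradiction from an orbit collision by applying $\omega^{-1}$, using that $\omega^{-1}e_{s_1}$ has positive $e_0$-coefficient while the intermediate orbit elements are basis vectors with zero $e_0$-coefficient, which violates minimality of the return time. The only cosmetic difference is that you cancel powers of $\omega$ directly where the paper routes the same deduction through an inner-product computation.
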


\begin{proof}
It is sufficient to show that $\omega^{n_i} e_i \ne \omega^{n_j} e_j$ for $i \ne j$. Suppose $\omega^{n_i} e_i = \omega^{n_j} e_j$ and $n_i \le n_j$. First we have \[ -1 = \omega^{n_i} e_i  \cdot  \omega^{n_j} e_j = e_i \cdot  \omega^{n_j-n_i} e_j \quad \Rightarrow  \quad   \omega^{n_j-n_i} e_j= e_i \quad \Rightarrow \quad n_j - n_i \gneqq 0. \] Then, we have \[  \omega^{n_j-n_i -1} e_j = \omega^{-1} e_i = s^{-1}( e_0 - e_j-e_k)\quad  \Rightarrow  \quad  \omega^{n_j-n_i -1} e_j \cdot e_0 \ne 0.\] Since $n_j$ is the smallest positive integer such that $\omega^{n_\ell} e_\ell \cdot e_0 >0 $ and since $e_j \cdot e_0 =0$, we have \[ n_j-n_i -1 \le 0 , \qquad \text{ and } n_j-n_i -1 \ne 0.\]
It follows that $ n_j-n_i  \leq 0$.
\end{proof}

\begin{lem}\label{L:olength}
The sum $n_i + n_j + n_k = n$. Furthermore, the sets $O_i=\{ \omega^s e_i, 0 \le s \le n_i-1\},O_j\{ \omega^s e_j, 0 \le s \le n_j-1\}$ and $O_k=\{ \omega^s e_k, 0 \le s \le n_k-1\}$ are pairwise disjoint.
\end{lem}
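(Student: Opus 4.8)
The plan is to unwind the composition $\omega=\kappa_{i,j,k}\circ s$ into explicit dynamics on the exceptional classes $e_1,\dots,e_n$, and then to read off $O_i,O_j,O_k$ as the arcs into which the three indices $\{i,j,k\}$ cut the cycles of the underlying permutation. Since $s\in\Sigma_n$ is a product of the $s_1,\dots,s_{n-1}$, it fixes $e_0$ and permutes $e_1,\dots,e_n$; write $s(e_\ell)=e_{\tau(\ell)}$ for a permutation $\tau$ of $\{1,\dots,n\}$, so that the defining relations $s(e_{i'})=e_i$, $s(e_{j'})=e_j$, $s(e_{k'})=e_k$ read $\{i',j',k'\}=\tau^{-1}(\{i,j,k\})$. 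A direct computation with $\kappa_{i,j,k}(x)=x+(x\cdot\alpha_{i,j,k})\,\alpha_{i,j,k}$ gives $\kappa_{i,j,k}(e_\ell)=e_\ell$ for $\ell\notin\{i,j,k\}$ and $\kappa_{i,j,k}(e_i)=e_0-e_j-e_k$ (and cyclically). Hence
\[ \omega(e_\ell)=\kappa_{i,j,k}(e_{\tau(\ell)})=\begin{cases} e_{\tau(\ell)}, & \ell\notin\{i',j',k'\},\\ e_0-e_a-e_b \ \ (\text{a line class}), & \ell\in\{i',j',k'\}. \end{cases} \]
Thus the forward $\omega$-orbit of any $e_\ell$ simply tracks the forward $\tau$-orbit of $\ell$ through exceptional classes for as long as the $\tau$-iterates avoid the ``sink set'' $\{i',j',k'\}$; the first time a sink is reached, the next image acquires positive $e_0$-coefficient. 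Consequently $n_\ell=\min\{m\ge 1:\tau^{m-1}(\ell)\in\{i',j',k'\}\}$ and
\[ O_s=\{\,e_{\tau^m(s)}:0\le m\le n_s-1\,\},\qquad s\in\{i,j,k\}, \]
where $\tau^{n_s-1}(s)$ is the first sink met along the $\tau$-orbit of $s$.

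The crux is to exploit that the sinks are exactly the $\tau$-predecessors of the ``sources'' $\{i,j,k\}$, i.e. an index is a sink precisely when its $\tau$-successor lies in $\{i,j,k\}$. I claim each arc $O_s$ contains the single source $s$ and no other. Indeed, if $\tau^m(s)$ were a source for some $1\le m\le n_s-1$, then $\tau^{m-1}(s)$ would be a sink at a step $\le n_s-2$, contradicting the minimality built into the definition of $n_s$ (the first sink occurs only at step $n_s-1$). Writing the landing sink as $\tau^{n_s-1}(s)=\tau^{-1}(\sigma(s))$ defines $\sigma(s)\in\{i,j,k\}$ with $\tau^{n_s}(s)=\sigma(s)$; Lemma~\ref{L:perm} guarantees the three landing sinks are distinct, so $\sigma$ is a permutation of $\{i,j,k\}$. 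Therefore $O_s$ is precisely the half-open arc running from the source $s$ up to, but not including, the next source $\sigma(s)$ along its $\tau$-cycle.

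To assemble the partition I would invoke essentiality. By the (unlabelled) lemma on essential elements in Section~\ref{S:wgroup}, for each index $\ell$ there is $k$ with $\omega^{k}(e_\ell)\cdot e_0\neq 0$, so every $n_\ell$ is finite; equivalently the forward $\tau$-orbit of every index meets the sink set, i.e. every $\tau$-cycle contains a sink and hence (taking successors) a source. Within each cycle the sources therefore cut the cycle into half-open arcs ``from one source up to the next'', and by the previous paragraph these arcs are exactly the relevant $O_s$. Since there are exactly three sources $i,j,k$, there are exactly three such arcs, namely $O_i,O_j,O_k$, and they partition $\bigcup_{\text{cycles}}\mathcal{C}=\{1,\dots,n\}$. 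Disjointness of $O_i,O_j,O_k$ is then immediate, and since the $n_s$ iterates $\tau^0(s),\dots,\tau^{n_s-1}(s)$ are distinct (they lie in a single cycle without completing it) we have $|O_s|=n_s$, so the partition yields $n_i+n_j+n_k=n$.

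The main obstacle is the middle step: establishing that each $O_s$ meets $\{i,j,k\}$ in exactly the one point $s$, so that the three orbits genuinely are the gaps between consecutive sources rather than overlapping or skipping past one another. This is where the predecessor relationship $\{i',j',k'\}=\tau^{-1}(\{i,j,k\})$ combines with the minimality in the definition of $n_s$, and where Lemma~\ref{L:perm} supplies the bijectivity of $\sigma$. Once that is in place, both the disjointness and the count $n_i+n_j+n_k=n$ reduce to the elementary fact that marked points on the cycles of a permutation partition those cycles into arcs; the essentiality hypothesis enters only through the finiteness of the $n_\ell$, which ensures no cycle is missed.
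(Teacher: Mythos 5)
Your proof is correct, and it takes a genuinely different (and in one respect more complete) route than the paper's. The paper opens by asserting that it suffices to prove pairwise disjointness and then establishes disjointness by a direct collision argument: from $\omega^s e_i = \omega^t e_j$ with $s<t$ it deduces $n_j=(t-s)+n_i$, hence $\omega^{n_j-1}e_j=\omega^{n_i-1}e_i$, contradicting Lemma~\ref{L:perm}. You instead unwind $\omega=\kappa_{i,j,k}\circ s$ into the index permutation $\tau$, identify the sink set $\tau^{-1}(\{i,j,k\})=\{i',j',k'\}$, and exhibit each $O_\ell$ as the half-open arc of a $\tau$-cycle running from the source $\ell$ to the next source $\sigma(\ell)$. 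This yields disjointness for free from the arc decomposition, and---more importantly---it actually proves the covering statement $O_i\cup O_j\cup O_k=\{e_1,\dots,e_n\}$ (every $\tau$-cycle meets the sink set because each $n_\ell$ is finite, which is where essentiality enters), which is exactly what is needed to pass from disjointness to $n_i+n_j+n_k=n$ and which the paper's proof leaves implicit in its first sentence. Two small remarks: your appeal to Lemma~\ref{L:perm} for the bijectivity of $\sigma$ is harmless but not strictly needed, since once each arc contains a single source the next-source map is automatically a bijection of the sources within each cycle; and your parenthetical ``without completing it'' is slightly off when $\sigma(s)=s$ (the arc then \emph{is} the whole cycle), though distinctness of $\tau^0(s),\dots,\tau^{n_s-1}(s)$ still follows from the single-source property, since a repetition would force an extra source inside the arc.
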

\begin{proof}
It is sufficient to show that $O_i, O_j, O_k$ are pairwise disjoint. Suppose there are $0 \le s \le n_i-1$ and $0\le t \le n_j-1$ such that $\omega^s e_i = \omega^t e_j$. We may assume $s<t$. It follows that $\omega^{t-s} e_j = e_i$. Since $n_i $ is the smallest positive integer such that $\omega^n_i e_i\cdot e_0 >0$ and since $t-s < n_j$, we have $n_j = t-s+ n_i$ and $\omega^{n_j -1} e_j = \omega ^{n_i-1} e_i$. This is a contradiction because of the previous Lemma \ref{L:perm}.
\end{proof}

We can rewrite the above Lemma in terms of the birational map $f$ associated with $\omega$. 
\begin{lem}\label{L:orbitdata}
There are three positive integers $n_i, n_j,n_k$ and a permutation $\sigma$ on $\{i,j,k\}$ such that for each $\ell \in \{i,j,k \}$
\[ f^{n_\ell-1} (p_\ell) = p_{\sigma(\ell)'}, \qquad   f^{s} (p_\ell) \notin \text{Ind}(f) \ \ \text{for all } \ \ 0 \le s \le n_\ell-2. \]
\end{lem}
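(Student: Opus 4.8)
The plan is to recognize Lemma~\ref{L:orbitdata} as the purely dynamical restatement of the two preceding lattice lemmas, Lemmas~\ref{L:perm} and~\ref{L:olength}. All of the combinatorial content has already been extracted there: those lemmas describe the $\omega$-orbits of the exceptional classes $e_i, e_j, e_k$ in $\zm$, and what remains is only to transport the statements from the lattice to $\p^2$ through the dictionary furnished by $F_* = \omega$ together with the semiconjugacy $\pi \circ F = f \circ \pi$. Throughout I set $p_\ell = \pi(\cE_\ell)$ with $\phi(e_\ell) = [\cE_\ell]$, and I recall that $\text{Ind}(f) = \{p_{i'}, p_{j'}, p_{k'}\}$ corresponds under $\phi$ exactly to the basis classes $e_{i'}, e_{j'}, e_{k'}$, that is, to those $e_\ell$ with $\omega e_\ell \cdot e_0 > 0$.

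First I would establish the single-step dictionary: if $\omega e_\ell = e_m$ is again an exceptional class (equivalently $\omega e_\ell \cdot e_0 = 0$, equivalently $e_\ell \notin \{e_{i'}, e_{j'}, e_{k'}\}$, equivalently $p_\ell \notin \text{Ind}(f)$), then $f$ is regular at $p_\ell$ and $f(p_\ell) = p_m$. This is the point-level shadow of $F_* = \omega$: the automorphism $F$ realizing $\omega$ carries the exceptional curve $\cE_\ell$ onto the curve in the class $\phi(\omega e_\ell) = [\cE_m]$, and projecting by $\pi$ through $\pi \circ F = f \circ \pi$ sends $p_\ell = \pi(\cE_\ell)$ to $\pi(\cE_m) = p_m$; regularity of $f$ at $p_\ell$ is precisely the condition $\omega e_\ell \cdot e_0 = 0$, which guarantees that $f$ does not blow $p_\ell$ up but collapses the contracted curve to a single point. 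Iterating and applying the dictionary at each exceptional step then shows, for $0 \le t \le n_\ell - 1$, that $f^t(p_\ell)$ is a well-defined point equal to $p_{m_t}$, where $\omega^t e_\ell = e_{m_t}$.

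With the dictionary in hand, the two assertions become immediate readings of the lattice lemmas. By definition $n_\ell$ is the smallest positive integer with $\omega^{n_\ell} e_\ell \cdot e_0 > 0$; minimality forces $\omega^t e_\ell$ to be an exceptional class lying outside $\{e_{i'}, e_{j'}, e_{k'}\}$ for every $0 \le t \le n_\ell - 2$, while $\omega^{n_\ell - 1} e_\ell = e_{\sigma(\ell)'}$ for a permutation $\sigma$ of $\{i,j,k\}$ by Lemma~\ref{L:perm}. Feeding these through the dictionary yields $f^t(p_\ell) \notin \text{Ind}(f)$ for $0 \le t \le n_\ell - 2$ and $f^{n_\ell - 1}(p_\ell) = p_{\sigma(\ell)'}$, which is exactly the claim. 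The permutation $\sigma$ and the fact that $n_i, n_j, n_k$ are genuine finite positive integers indexing disjoint orbit segments are supplied verbatim by Lemmas~\ref{L:perm} and~\ref{L:olength}, so no new bookkeeping is required here.

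I expect the main obstacle to be making the single-step dictionary fully rigorous, namely matching the analytic statement ``$f$ is regular at $p_\ell$ and $f(p_\ell) = p_m$'' precisely with the combinatorial statement ``$\omega e_\ell = e_m$ is exceptional.'' The delicate point is to justify that $F$ carries the curve $\cE_\ell$ onto $\cE_m$ on the nose, rather than merely in homology, and that the projection by $\pi$ of this equality reads off the base points cleanly even though $\cE_\ell$ is $\pi$-contracted; this is where I would lean on the realization framework of Theorem~\ref{T:autgroup} and the explicit relation between the two markings $\phi$ and $\phi \circ \omega$. Everything downstream of this identification is already carried out at the lattice level.
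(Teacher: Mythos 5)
Your proposal is correct and matches the paper's treatment: the paper offers no separate argument for Lemma \ref{L:orbitdata}, introducing it with the single remark that it is a rewriting of Lemmas \ref{L:perm} and \ref{L:olength} in terms of the birational map $f$, which is exactly the lattice-to-$\p^2$ translation you carry out. Your version is in fact more explicit than the paper's, since you spell out the single-step dictionary between $\omega e_\ell = e_m$ and $f(p_\ell) = p_m$ that the paper leaves implicit.
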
 

We call these numerical data $n_i, n_j,n_k$ and a permutation $\sigma$ on $\{i,j,k\}$ defined in Lemma \ref{L:orbitdata} \textit{the orbit data} of $f$. Also since \[ \omega : e_0 - e_{i'}-e_{j'} \mapsto (e_0 - e_i-e_j-e_k) - (e_0 - e_j-e_k) - (e_0 - e_i-e_k) = e_k,\] we see that

\begin{equation*}
f \  : \ \left\{ \begin{aligned} &E_i \to p_i \to  f(p_i) \to \cdots f^{n_i-1} (p_i) = p_{\sigma (i)'} \\&E_j \to p_j \to  f(p_j) \to \cdots f^{n_j-1} (p_j) = p_{\sigma(j)'}\\&E_k \to p_k \to  f(p_k) \to \cdots f^{n_k-1} (p_k) = p_{\sigma(k)'} \end{aligned} \right.
\end{equation*}
where $E_i$ is the line joining $e_{j'}$ and $e_{k'}$, $E_j$ is the line joining $e_{i'}$ and $e_{k'}$, and $E_k$ is the line joining $e_{i'}$ and $e_{j'}$.

\subsection{Fixing a Cubic}

For any curve $C \in \p^2$, we define \[ f(C) = \overline{ f(C \setminus \text{Ind}(f) }. \] The degree of $f(C)$ is determined by $F_*(\tilde C) \cdot e_0$ where $\tilde C$ is the strict transform of $C$ in $S$. Thus we have \[ \text{deg} f(C) =  2 \text{deg}(C) - \sum{p \in \text{Ind}(f)} \nu_p(C)\] where $\nu_p(C)$ is multiplicity of $C$ at $p$. Using this, Diller \cite{Diller:2011} gives sufficient conditions on the orbit data for $f$ to preserve the projection $X$ of the anti-canonical curve $Y$. 

\begin{thm}\cite[Theorem~4.1]{Diller:2011}\label{T:3lines}
Suppose $X$ is three lines $L_1, L_2, L_3$ joining at a single point, and $X$ is preserved by $f$. Then, each line contains exactly one point of indeterminacy for $f$ and exactly one point of indeterminacy for $f^{-1}$. Furthermore, we have one of the following
\begin{enumerate}
\item $\sigma$ is an identity permutation,  
\item $\sigma = (i\, j)$ is a transposition exchanging $i$ and $j$ and both $n_i$ and $n_j$ are odd, 
\item $\sigma$ is a cyclic permutation, then $n_i \equiv n_j \equiv n_k \equiv 1 \text{ or } 2$ $(\text{mod}\ 3)$.
\end{enumerate}
\end{thm}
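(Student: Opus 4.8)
The plan is to study the restriction $g := f|_X$, an automorphism of the reducible anti-canonical curve $X = L_1 \cup L_2 \cup L_3$, and to convert the orbit data $(\sigma; n_i, n_j, n_k)$ into combinatorial constraints governing how $g$ permutes the three lines. Since the common point $O$ is the unique singular point of $X$ while every indeterminacy point of $f$ and of $f^{-1}$ is a smooth base point lying on the smooth locus, $O$ is not an indeterminacy point; hence $g(O)=O$ and $g$ permutes the three branches at $O$, inducing a permutation $\tau \in S_3$ of the lines. Even where $f$ is indeterminate on $\mathbf{P}^2$, the restriction $g$ is a genuine automorphism of the abstract curve $X$, and the determinant $\lambda = \delta(F) > 1$ governs the action of $g$ on the one-dimensional group $Pic_0(X) \cong \mathbb{C}$ by $t \mapsto \lambda t + b$ (the additive analogue of Theorem \ref{T:determinant} valid for the curves allowed by Theorem \ref{T:invariant}); in particular $\lambda$ is a Salem number, so $\lambda^m \ne 1$ for every $m \ne 0$.

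First I would prove that each line carries exactly one indeterminacy point of $f$ and one of $f^{-1}$. Because $f$ preserves $X$, the image $f(L_\ell)$ is again a line of $X$, so $\deg f(L_\ell)=1$; substituting into the degree formula $\deg f(C) = 2\deg C - \sum_{p \in \mathrm{Ind}(f)} \nu_p(C)$ forces $\sum_{p \in \mathrm{Ind}(f)} \nu_p(L_\ell) = 1$, so exactly one of $p_{i'},p_{j'},p_{k'}$ lies on $L_\ell$. Two indeterminacy points on a single line would make that line equal to one of the contracted lines $E_i, E_j, E_k$, which is impossible since $g$ is an automorphism of $X$ and cannot collapse a component. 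The same argument applied to $f^{-1}$ handles the unprimed points. This yields two bijections $a, b$ recording which line contains $p_\ell$ and which contains $p_{m'}$.

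Next I would track the line index along each orbit. Each application of $g$ advances the line by $\tau$, so following $p_\ell \to g(p_\ell) \to \cdots \to g^{n_\ell - 1}(p_\ell) = p_{\sigma(\ell)'}$ gives the identity $\tau^{n_\ell - 1}(a(\ell)) = b(\sigma(\ell))$ for every $\ell \in \{i,j,k\}$. Requiring all three identities to hold for a single $\tau$ forces $\tau$ to be conjugate to $\sigma$ through the labelings $a,b$, hence to share the cycle type of $\sigma$. I would then run the case analysis on $\mathrm{ord}(\tau)$: when $\tau$ is the identity the three identities impose no arithmetic relation and $\sigma$ is the identity, giving case (1); when $\tau$ is a transposition the identities reduce to a parity statement forcing the two relevant orbit lengths to be odd, giving case (2); and when $\tau$ is a $3$-cycle they collapse to the single congruence $n_i \equiv n_j \equiv n_k \pmod 3$, giving case (3).

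The hard part will be pinning down the precise correspondence between $\tau$ and $\sigma$ (equivalently the labelings $a,b$) and excluding the degenerate residues: ruling out both lengths even in the transposition case, and ruling out the common residue $0 \bmod 3$ in the $3$-cycle case. This is exactly where the additive group law on $Pic_0(X) \cong \mathbb{C}$ and the multiplier $t \mapsto \lambda t + b$ must enter. A forbidden residue would return an orbit point to its original line after the wrong number of steps, while its $Pic_0$-coordinate has been scaled by $\lambda^{n_\ell}$; since $\lambda$ is not a root of unity, consistency can only be achieved if the point degenerates onto $O$ or onto an indeterminacy point, contradicting the disjointness of the orbit segments in Lemma \ref{L:orbitdata} or the vanishing $\rho_0(\alpha)=0$ of Proposition \ref{P:rhozeronodal}. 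Carrying out this coordinate bookkeeping to extract exactly the admissible residues is the crux of the argument.
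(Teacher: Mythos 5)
Your first half is exactly the paper's argument: the degree formula $\deg f(L_\ell)=2\deg L_\ell-\sum_{p\in\mathrm{Ind}(f)}\nu_p(L_\ell)$ together with $\deg f(L_\ell)=1$ gives one indeterminacy point of $f$ (and, applied to $f^{-1}$, one of $f^{-1}$) on each line, and the bookkeeping identity $\tau^{n_\ell-1}(a(\ell))=b(\sigma(\ell))$ is precisely what the paper extracts from the points $q_\ell=E_\ell\cap L_\ell$ with $f(q_\ell)=p_\ell$. The divergence, and the gap, is in how you propose to finish. You assert that the three identities force $\tau$ to share the cycle type of $\sigma$, and you then set yourself the task of \emph{excluding} the residues ``both $n$'s even'' and ``$n_\ell\equiv 0\pmod 3$'' by an argument on $Pic_0(X)\cong\mathbb{C}$ using the multiplier $\lambda$. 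Both steps are off. The identities do not force $\tau$ and $\sigma$ to have the same cycle type: with $\tau$ a $3$-cycle and all $n_\ell\equiv 0\pmod 3$ one gets $\sigma=\mathrm{id}$, and with $\tau$ a transposition and both lengths even one again gets $\sigma=\mathrm{id}$. The paper does not rule these residues out; it observes that they simply land in conclusion (1), which carries no arithmetic constraint, so the trichotomy is resolved purely combinatorially with no Salem-number or $Pic_0$ input. Your proposed analytic ``crux'' also has no traction as stated: an orbit point returning to its original line after $n_\ell$ steps imposes no periodicity on the point itself, so the fact that its $Pic_0$-coordinate has been scaled by $\lambda^{n_\ell}$ produces no contradiction.

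One further caution: the genuinely delicate configuration is $\tau$ a $3$-cycle with $\sigma$ a transposition, where the line-tracking yields only mod-$3$ congruences and not the parity asserted in conclusion (2). Your scheme does not address this case (your cycle-type claim would wrongly dismiss it as impossible), and the paper's own sketch records it in Case 3 and then defers to Diller's detailed proof. If you want a complete argument you must either show this configuration cannot occur or derive the parity there by other means; but that is a different missing step from the one you identified, and it is the only place where information beyond the combinatorial bookkeeping could be needed.
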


\begin{proof} To make the notation simple, let us assume that $\{i,j,k\} = \{1,2,3\}$. Since each irreducible component of $X$ is a line, $\text{deg} f^\pm(L_i) =1$ for all $i=1,2,3$. It follows that each line contains exactly one point of indeterminacy for $f^\pm$. It follows that all three points of indeterminacy are distinct. Suppose $p_{i'} \in L_i$ for $i=1,2,3$. Thus, for each $i=1,2,3$, only $L_i$ intersects $E_i$ at a non-indeterminate point, say $q_i$. Since $f|_X$ is an automorphism, we have $f|_X(q_i) = p_i$. 
We have three possibilities:
\begin{itemize}
\item \textit{Case 1: $f(L_i) = L_i, i=1,2,3$.} Since $q_i, p_{i'}  \in L_i$, the permutation $\sigma = Id$.
\item  \textit{Case 2: $f: L_1 \leftrightarrow L_2$} Since $q_1 \in L_1$, we have $p_1 \in L_2$. Since $p_{2'} \in L_2$. Thus if $\sigma= (1 \, 2)$ then $n_1$ has to be odd and so does $n_2$ and if $\sigma=Id$ then both $n_1$ and $n_2$ are even.
\item \textit{Case 3: $f$ permutes $L_i$ cycliclly} Suppose $f:L_1 \to L_2 \to L_3 \to L_1$. Similar to case 2, $p_1 \in L_2$. Since $p_{i'} \in L_i, i=1,2,3$ we have 
\begin{enumerate}
\item if $\sigma(1) = 1$ then $n_1 \equiv 0 $ (mod $3$),
\item if $\sigma(1) = 2$ then $n_1 \equiv 1 $ (mod $3$), and
\item if $\sigma(1) = 3$ then $n_1 \equiv 2 $ (mod $3$).
\end{enumerate}
Thus if $\sigma$ is a cyclic permutation then $n_i \equiv n_j \equiv n_k \equiv 1 \text{ or } 2$ $(\text{mod}\ 3)$,  if $\sigma=Id$ then $n_i \equiv n_j \equiv n_k \equiv 0$ $(\text{mod}\ 3)$, and if $\sigma = (1 \, 2)$ then $n_3 \equiv 0$ (mod $3$) and $n_1 \equiv n_2 \equiv 1 \text{ or } 2$ $(\text{mod}\ 3)$.
\end{itemize}
The assertion follows from the above three cases. The detailed proof is also available in \cite{Diller:2011, Kim:2022}.
\end{proof}

Also, we have 
\begin{thm}\label{T:conicT}
If $X$ is a union of a conic $Q$ and its tangent $L$ and $X$ is preserved by $f$, then exactly one of the following occurs
\begin{enumerate}
\item there exist a unique $\ell \in \{ i,j,k\}$ such that $p_\ell, p_{\ell'} \in L$ and $\sigma(\ell) = \ell$
\item three positive integers $n_i,n_j,n_k$ are all odd, and every point of indeterminacy belongs to $Q \setminus L$ \[ \text{Ind}(f) \cup \text{Ind}(f^{-1}) \subset Q \setminus L. \]
\end{enumerate}
\end{thm}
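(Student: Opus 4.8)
The plan is to adapt the proof of Theorem~\ref{T:3lines}, replacing the symmetric degree bookkeeping for three lines by the asymmetric one forced by the two components $Q$ (degree $2$) and $L$ (degree $1$) of $X$. Write $T = Q\cap L$ for the point of tangency; it is the unique singular point of $X$, so $f|_X$, being an automorphism of $X$, fixes $T$. The key structural input is that every indeterminacy point of $f$ or $f^{-1}$ is a base point of the marked blowup and therefore lies in the smooth locus $X^* = X\setminus\{T\}$; in particular no indeterminacy point equals $T$. First I would run the degree count. Since $f|_X$ permutes the components $\{Q,L\}$ and $\deg Q\neq\deg L$, letting $a$ and $b$ denote the number of indeterminacy points of $f$ on $Q\setminus\{T\}$ and on $L\setminus\{T\}$ respectively, the identity $\deg f(C)=2\deg C-\sum_{p\in\text{Ind}(f)}\nu_p(C)$ together with $a+b=3$ leaves exactly two possibilities:
\begin{itemize}
\item \emph{Case A} ($f(Q)=Q,\ f(L)=L$): here $4-a=2$ and $2-b=1$, so $a=2,\ b=1$; applying the same count to $f^{-1}$ places exactly one indeterminacy point of $f^{-1}$ on $L$ as well.
\item \emph{Case B} ($f(Q)=L,\ f(L)=Q$): here $4-a=1$ and $2-b=2$, so $a=3,\ b=0$, and likewise all three indeterminacy points of $f^{-1}$ lie on $Q$.
\end{itemize}
These cases are mutually exclusive (Case A has an indeterminacy point on $L$, Case B has none), which is what produces the word ``exactly'' in the statement once each is matched to a conclusion.

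Next I would settle Case B, which gives conclusion~(2). Here $f$ interchanges $Q$ and $L$, so the $f$-orbit of any point alternates between the two components. Each orbit of the orbit data starts at $p_\ell\in\text{Ind}(f^{-1})\subset Q$ and ends at $f^{n_\ell-1}(p_\ell)=p_{\sigma(\ell)'}\in\text{Ind}(f)\subset Q$; a point that starts on $Q$ returns to $Q$ only after an even number of applications of $f$, so $n_\ell-1$ is even and $n_\ell$ is odd for every $\ell\in\{i,j,k\}$. Since all six indeterminacy points lie on $Q\setminus\{T\}=Q\setminus L$, this is precisely conclusion~(2).

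Finally I would treat Case A, which should give conclusion~(1); this is where the real content lies. Let $p_{c'}$ be the unique indeterminacy point of $f$ on $L$ and $p_d$ the unique indeterminacy point of $f^{-1}$ on $L$, and write $\{a,b,c\}=\{i,j,k\}$. Because $f(L)=L$, the forward orbit of $p_d$ stays in $L$, so its terminal point $p_{\sigma(d)'}=f^{n_d-1}(p_d)$ lies in $L\cap\text{Ind}(f)=\{p_{c'}\}$; hence $\sigma(d)=c$. It then remains to prove the equality $c=d$, for then $\ell:=c=d$ satisfies $p_\ell,p_{\ell'}\in L$ and $\sigma(\ell)=\ell$, and it is the unique such index because $L$ carries only one indeterminacy point of each of $f$ and $f^{-1}$. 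To prove $c=d$, recall that $f$ contracts the exceptional line $E_c$, the line through the two indeterminacy points $p_{a'},p_{b'}$ of $f$ (both of which lie on $Q$ in Case A), to the point $p_c$. Set $r=E_c\cap L$. One checks $r\neq T$ (else $T\in E_c$ forces $f(T)=p_c$, contradicting $f(T)=T$ together with $T\notin X^*$ and $p_c\in X^*$) and $r\notin\text{Ind}(f)$ (indeed $p_{a'},p_{b'}\notin L$ since $Q\cap L=\{T\}$, and $p_{c'}\notin E_c$ because the three indeterminacy points of $f$ are not collinear). Then $f(r)=p_c$ since $r$ lies on the contracted line $E_c$, whereas $f(r)\in f(L)=L$ since $r\in L$; therefore $p_c\in L$, and as $p_d$ is the only indeterminacy point of $f^{-1}$ on $L$ we conclude $p_c=p_d$, that is $c=d$.

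I expect the equality $c=d$ to be the main obstacle. The degree split of the first step and the parity argument of Case B are routine, but matching the indeterminacy point of $f$ on $L$ with that of $f^{-1}$ on $L$ requires tying the contraction of $E_c$ to the invariance $f(L)=L$, and the auxiliary point $r=E_c\cap L$ must be kept away from both $T$ and $\text{Ind}(f)$. The delicate points are therefore the exclusion of the degenerate configurations (a collinear triple of indeterminacy points, or $E_c$ passing through $T$) and the bookkeeping when some indeterminacy points are infinitely near, where the multiplicities $\nu_p$ entering the degree formula must be recomputed on the successive blowups rather than read off as transversal intersections.
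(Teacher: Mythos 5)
Your proposal is correct and follows essentially the same route as the paper: the dichotomy $f(L)=L$ versus $f(L)=Q$, the degree count forcing one (resp.\ zero) indeterminacy point on $L$, the use of the contracted exceptional line meeting $L$ at a non-indeterminate point to place $p_\ell$ on $L$ and force $\sigma(\ell)=\ell$, and the alternation parity argument giving odd $n_i,n_j,n_k$ in the swapping case. Your write-up is merely more explicit than the paper's about the degenerate configurations (non-collinearity of the indeterminacy points, avoidance of the tangency point $T$) and about why the two cases are mutually exclusive.
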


\begin{proof} Again, let us assume $\{i,j,k\} = \{1,2,3\}$. We have either $f(L) = Q$ or $f(L) = L$. If $f(L) = L$, then $L$ contains exactly one point of indeterminacy, say $p_{1'}$. Then, using the same notations in the proof of the previous Theorem, we have both $q_1=L_1 \cap X, p_{1'} \in L$, and so does $p_1$ and $\sigma(1) = 1$. 

If $f(L) = Q$, then $L$ does not contain any indeterminant points. Also, any exceptional line $E_i$ intersects $Q$ at two indeterminant points. So there is a non-indeterminate intersection point $q_i = E_i \cap L$. It follows that $f(q_i) = p_i \in Q$ for all $I=1,2,3$. Since all points of indeterminacy belongs to $Q$ and $f$ interchange $Q$ and $L$, we see that $f^{even} q_i \in L$ and $f^{odd} q_i \in Q$ and all $n_i$ has to be odd. 
\end{proof}

\section{UnRealizable essential elements}\label{S:trouble}
Let us consider $\omega = \kappa_{1,7,8} \circ s  \in W_{13}$ such that the Cremona involution $\kappa_{1,7,8}$ is defined by 
\[  \begin{aligned} \ \kappa_{1,7,8} \ : \ & e_0  \mapsto 2 e_0 - e_1-e_7-e_8 \\& e_1 \mapsto e_0 - e_7-e_8\\& e_7 \mapsto e_0 - e_1-e_8\\& e_8 \mapsto e_0 - e_1-e_7\\ & e_i \mapsto e_i \qquad \text{ otherwise } \end{aligned} \]
and a cyclic permutation $s$ is given by 
\[ s : e_0 \mapsto e_0, \qquad\text{ and } \qquad  e_i \mapsto e_{i+1} \ \ \ i=1, \dots, 12, \qquad\text{ and } \qquad e_{13} \mapsto e_1. \]  
It follows that 
\begin{equation*}
\omega \ :  \ \left\{ \begin{aligned} & e_1 \to e_2 \to e_3 \to e_4 \to e_5 \to e_6 \to e_0-e_1-e_8\\ & e_7 \to e_0 - e_1 - e_7\\ & e_8 \to e_9 \to e_{10} \to e_{11} \to e_{12} \to e_{13}  \to e_0-e_7-e_8\\
\end{aligned} \right.
\end{equation*}

Thus if $\omega$ is realized by an automorphism $F$ on a rational surface $S$ covers a birational map $f$, then we have \[  \text{Ind}(f^{-1} )= \{ p_1, p_7, p_8\} \qquad \text{and} \qquad \text{Ind}(f)= \{p_{7'}=p_6, p_{8'}=p_7, p_{1'}=p_{13} \}, \] and hence the orbit data of $f$ is given by $6,1,6$ and a cyclic permutation. 
The direct computation shows that the spectral radius $\lambda(\omega)$ is given by the largest real zero of $t^6-t^5-t^3-t+1$. In summary, we have

\begin{lem}
If $\omega$ is realized by an automorphism $F$ on some rational surface $S$, then $F$ covers a quadratic birational map $f$ with orbit data $6,1,6$ with a cyclic permutation. Furthermore the determinant of $f  = \lambda(\omega) \sim 1.50614$, i.e. the restriction of $f$ to the smooth locus of $X$ is given by $f|_X(t) = \lambda(\omega) t + b$, $t \in \mathbb{C}$ for some $b \in \mathbb{C}$.
\end{lem}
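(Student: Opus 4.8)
The plan is to obtain every assertion by matching the explicit $\omega$-action on $\mathbb{Z}^{1,13}$ displayed above against the general theory of Section \ref{S:quad}. First I record that $\omega=\kappa_{1,7,8}\circ s$ has exactly the shape treated there: $\kappa_{1,7,8}$ is a single Cremona involution, and the $13$-cycle $s$ (sending $e_i\mapsto e_{i+1}$ on $\{1,\dots,13\}$ and fixing $e_0$) is a product of the adjacent reflections $s_1,\dots,s_{12}$, hence lies in $\Sigma_{13}$. The three displayed rows show that every $e_\ell$ eventually maps to a class of positive $e_0$-degree, so the orbit integers $n_\ell$ of Section \ref{S:quad} are well defined; one also checks directly that $\omega\in W_{13}^{ess}$, so the orbit-data lemmas apply. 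Therefore, whenever $\omega$ is realized by $F\in\mathrm{Aut}(S,Y)$, the map $F$ covers a quadratic birational map $f$ with three distinct points of indeterminacy. Using $s^{-1}(e_1)=e_{13}$, $s^{-1}(e_7)=e_6$, $s^{-1}(e_8)=e_7$, the indeterminacy description of Section \ref{S:quad} gives
\[ \mathrm{Ind}(f^{-1})=\{p_1,p_7,p_8\},\qquad \mathrm{Ind}(f)=\{p_{13},p_6,p_7\}. \]

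Next I read the orbit data straight from the orbit diagram. The smallest $n_\ell>0$ with $\omega^{n_\ell}e_\ell\cdot e_0>0$ is, row by row: $n_1=6$ with $\omega^{5}e_1=e_6=e_{7'}$; $n_7=1$ with $\omega^{0}e_7=e_7=e_{8'}$; and $n_8=6$ with $\omega^{5}e_8=e_{13}=e_{1'}$. Thus the orbit lengths are $6,1,6$, their sum is $13=n$ as required by Lemma \ref{L:olength}, and the permutation of $\{1,7,8\}$ determined by these endpoints is $\sigma(1)=7,\ \sigma(7)=8,\ \sigma(8)=1$, i.e. the $3$-cycle $\sigma=(1\,7\,8)$. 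By Lemmas \ref{L:perm}--\ref{L:orbitdata} these numerical data are exactly the orbit data of $f$, namely $(6,1,6)$ with a cyclic permutation.

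For the determinant, since the anti-canonical curve is the cuspidal cubic $X=\pi(Y)$ with smooth locus $X^*\cong\mathbb{C}$, Theorem \ref{T:determinant} gives that $f|_X$ is affine, $f|_X(t)=\delta(F)\,t+b$, and the identification of the determinant with the spectral radius recorded in Section \ref{S:quad} gives $\delta(F)=\lambda(\omega)$. It then remains to evaluate $\lambda(\omega)$: forming the $14\times14$ integer matrix of $\omega$ from the formulas for $\kappa_{1,7,8}$ and $s$, its characteristic polynomial factors as a product of cyclotomic polynomials times the reciprocal Salem factor $t^6-t^5-t^3-t+1$, whose unique real root greater than $1$ is $\lambda(\omega)\approx 1.50614$.

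The only genuinely computational step is this last one: isolating the Salem factor $t^6-t^5-t^3-t+1$ from the degree-$14$ characteristic polynomial and verifying that the complementary factors are cyclotomic, so that $\lambda(\omega)$ is indeed the claimed Salem number. Everything else is bookkeeping against Section \ref{S:quad}: the quadratic map, its three indeterminacy points, the orbit lengths $6,1,6$, and the cyclic $\sigma$ all fall out of the displayed $\omega$-action, while the affine normal form of $f|_X$ with multiplier $\lambda(\omega)$ is a direct application of Theorem \ref{T:determinant}.
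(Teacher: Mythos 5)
Your proposal is correct and follows essentially the same route as the paper: the paper likewise reads the orbit data $6,1,6$ with cyclic permutation directly off the displayed $\omega$-orbits of $e_1,e_7,e_8$, identifies $\mathrm{Ind}(f)=\{p_6,p_7,p_{13}\}$ and $\mathrm{Ind}(f^{-1})=\{p_1,p_7,p_8\}$ via the framework of Section \ref{S:quad}, and obtains the multiplier from Theorem \ref{T:determinant} together with the direct computation that the Salem factor of the characteristic polynomial is $t^6-t^5-t^3-t+1$. Your write-up is in fact slightly more explicit than the paper's (which states the lemma as a summary of the preceding computation), but there is no substantive difference in method.
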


\begin{prop}\label{P:ccubic}
$\omega$ can not be realized by an automorphism $F$ on an anti-canonical rational surface $(S, Y)$ such that $X=\pi(Y)$ is a cuspidal cubic.
\end{prop}

\begin{proof}
Suppose $\omega$ is realized by an automorphism $F$ on an anti-canonical rational surface $(S, Y)$. Then $F$ covers a birational map $f$ fixing $X$. 
By Diller \cite[Theorem~1.3]{Diller:2011}, $f$ is uniquely determined up to linear conjugacy. We may assume $f$ is the one constructed in Section \ref{S:realization}. 
From direct computation, we see that an eigenvector $v = [ v_0, v_1, \dots, v_{13} ] $ of $\omega$ for the spectral raidius is given by 
\[  v_2=v_8, \ v_3= v_9, \ v_4=v_{10},\  v_5=v_{11},\  v_6=v_{12}, \qquad \text{ and }\] \[ \{ v_1, v_2, v_3, v_4, v_5, v_6,v_7, v_{13}\} \text{ is a set of } 8 \text{ distinct numbers.} \]
It follows that the base locus $\{p_1, \dots, p_{11}\}$ of the marked blowup satisfying $p_{i+6}$ is infinitely near point to $p_i$ of order $1$ for $i=2, \dots, 6$. 
Also by inspection, we see that there are $6$ nodal roots for the marked blowup $(S^v, Y^v, \phi^v)$:
 \[ \Phi(\phi^v)^+ = \{ e_2-e_8, e_3-e_9, e_4-e_{10}, e_5-e_{11}, e_6-e_{12}, e_0 - e_1-e_7-e_{13} \} \]
and \[ \omega (e_0 - e_1-e_7-e_{13}) = - (e_2-e_8). \]
Thus, $\omega^{-1}$ does not preserve the positivity of the effective roots, and by Theorem \ref{T:harbourne} we see that $\omega \notin Aut(S^v, Y^v,\phi^v)$. 
\end{proof}

\begin{prop}\label{P:others}
$\omega$ can not be realized by an automorphism $F$ on an anti-canonical rational surface $(S, Y)$ such that $X=\pi(Y)$ is either three lines joining at a single point or a conic with its tangent. 
\end{prop}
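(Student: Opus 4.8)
The plan is to rule out each of the two remaining geometries separately by invoking the numerical constraints that Diller's classification places on the orbit data: Theorem \ref{T:3lines} for three concurrent lines and Theorem \ref{T:conicT} for a conic together with its tangent. First I would record the orbit data of $\omega$. Reading the orbits of $e_1, e_7, e_8$ off the displayed action of $\omega$, one finds $\omega^{5}e_1 = e_6 = e_{7'}$, $\omega^{0} e_7 = e_7 = e_{8'}$, and $\omega^{5} e_8 = e_{13} = e_{1'}$, so the orbit lengths are $(n_1,n_7,n_8) = (6,1,6)$ and the permutation $\sigma$ is the three-cycle $1 \mapsto 7 \mapsto 8 \mapsto 1$. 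The two features I would extract and use are that $\sigma$ is fixed-point free and that the triple $(6,1,6)$ is neither all odd nor all congruent to $1$ or $2$ modulo $3$.

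For the three-lines case I would argue by contradiction. If $X$ were three lines through a single point preserved by $f$, then because $\sigma$ is cyclic, Theorem \ref{T:3lines} would force $n_1 \equiv n_7 \equiv n_8 \equiv 1$ or $2 \pmod 3$. This fails at once, since $n_1 = 6 \equiv 0 \pmod 3$. Hence $X$ cannot be three concurrent lines.

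For the conic-plus-tangent case I would apply Theorem \ref{T:conicT} in the same spirit. If $X = Q \cup L$ with $L$ tangent to $Q$ were preserved by $f$, then exactly one of the two alternatives of that theorem must hold. The first requires a fixed point $\ell$ of $\sigma$ with $p_\ell, p_{\ell'} \in L$, which is impossible since $\sigma$ is a three-cycle; the second requires $n_1, n_7, n_8$ to be simultaneously odd, which fails because $n_1 = 6$ is even. As neither alternative survives, $X$ cannot be a conic with its tangent, and the proposition follows.

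I do not anticipate a genuine obstacle: once the orbit data is pinned down, both conclusions are immediate consequences of the congruence and parity dichotomies in Theorems \ref{T:3lines} and \ref{T:conicT}, and together with Proposition \ref{P:ccubic} they exhaust the three possibilities allowed by Theorem \ref{T:invariant} for an invariant curve of an automorphism with $\delta(F) > 1$. The only step demanding care is the bookkeeping that identifies $\sigma$ as a fixed-point-free three-cycle and correctly matches the primed indices $e_{1'} = e_{13}$, $e_{7'} = e_6$, $e_{8'} = e_7$ to the orbit endpoints; this is exactly what makes the two arithmetic contradictions fall out cleanly.
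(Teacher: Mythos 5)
Your proposal is correct and follows essentially the same route as the paper: both reduce the question to the orbit data $(n_1,n_7,n_8)=(6,1,6)$ with cyclic $\sigma$ and then invoke the congruence condition of Theorem \ref{T:3lines} and the parity/fixed-point dichotomy of Theorem \ref{T:conicT} to exclude the two geometries. The only cosmetic difference is that the paper phrases the mod-$3$ obstruction as $1\not\equiv 6 \pmod 3$ while you observe $6\equiv 0\pmod 3$; both correctly show the required congruence fails.
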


\begin{proof}
Suppose $f$ is a birational map fixing $X$, and $F$ covers $f$.
If $X$ is three lines joining at a single point, by Theorem \ref{T:3lines}, there is no such $f$ since $\sigma$ is a cyclic permutation and $1 \not\equiv 6 $ (mod $3$).
If $X$ is a conic and its tangent, by Theorem \ref{T:conicT}, there is no such $f$ since  $\sigma$ is a cyclic permutation, and $6$ is not odd. 
\end{proof}

\begin{thm}\label{T:nono}
$\omega$ can not be realized by an automorphism $F$ on an anti-canonical rational surface $(S, Y)$.
\end{thm}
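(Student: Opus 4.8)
The plan is to deduce Theorem~\ref{T:nono} from the two preceding propositions together with Diller's classification of invariant anti-canonical curves. The key observation is that the three geometric configurations excluded by Propositions~\ref{P:ccubic} and~\ref{P:others} are precisely the three configurations permitted by Theorem~\ref{T:invariant}, so once a lower bound on the dynamical degree is established, the cases close up.

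First I would show that any realization forces $\delta(F)>1$. Suppose $\omega$ is realized by an automorphism $F$ on an anti-canonical rational surface $(S,Y)$, so that $F_*=\omega$ through the marking. Since $Y$ is the unique reduced irreducible anti-canonical curve and every automorphism preserves the canonical class, $F(Y)$ is again an effective anti-canonical divisor and hence $F(Y)=Y$; thus $F$ stabilizes $Y$. Because $\omega$ is essential in $W_{13}$ with $13\ge 10$, its spectral radius exceeds $1$; indeed the direct computation identifies $\lambda(\omega)$ with the largest real root of $t^6-t^5-t^3-t+1\approx 1.50614$. By Diller and Favre the dynamical degree of $F$ equals the spectral radius of $F_*$, so $\delta(F)=\lambda(\omega)>1$.

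Next I would invoke Theorem~\ref{T:invariant}. Because $F$ stabilizes $Y$ and $\delta(F)>1$, the projection $X=\pi(Y)$ must be one of exactly three types: a cuspidal cubic, a conic together with its tangent line, or three lines meeting at a single point. This reduces the statement to a finite case check, and each case is already handled: Proposition~\ref{P:ccubic} excludes the cuspidal cubic, while Proposition~\ref{P:others} excludes both reducible configurations.

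Since these three types exhaust the output of Theorem~\ref{T:invariant}, no automorphism $F$ realizing $\omega$ can exist, which is the assertion. I expect no genuine obstacle at this stage: the real content lies in the two propositions already proved --- the cuspidal case rests on the explicit list of six nodal roots and the failure of $\omega^{-1}$ to preserve positivity, and the reducible cases rest on the modular obstructions of Theorems~\ref{T:3lines} and~\ref{T:conicT} against a cyclic permutation with orbit lengths $6,1,6$ --- so the present theorem is purely the assembly of these pieces through the dynamical-degree bound.
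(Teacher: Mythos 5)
Your proposal is correct and follows exactly the paper's argument: reduce via Theorem~\ref{T:invariant} to the three possible types of $X$ and then dispatch each by Propositions~\ref{P:ccubic} and~\ref{P:others}. The only difference is that you explicitly justify the hypotheses of Theorem~\ref{T:invariant} (that $F$ stabilizes $Y$ and that $\delta(F)=\lambda(\omega)>1$), which the paper leaves implicit; this is a welcome but inessential addition.
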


\begin{proof}
By Theorem \ref{T:invariant}, the only possibility for $X$ is either a cuspidal cubic, a conic and its tangent, or three lines joining at a single point. The result follows from the previous two Propositions \ref{P:ccubic} and \ref{P:others}.
\end{proof}

\begin{proof}[Proof of Theorem C]
The existence of an essential element that cannot be realized by an automorphism on an anti-canonicial rational surface is clear from the previous Theorem \ref{T:nono}
\end{proof}

\begin{thm}\label{T:yes}
Let $s'$ be a reflection through a vector $e_2-e_{8}$. Then $s' \circ \omega$ is realized by an automorphism on an anticanonical rational surface $(S, Y)$ such that $Y$ is the strict transform of a cuspidal cubic in $\mathbb{P}^2$.
\end{thm}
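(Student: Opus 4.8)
The plan is to realize $s'\circ\omega$ on the very surface $(S^v,Y^v,\phi^v)=\mathrm{B}\ell(X,\rho^v)$ produced by the construction of Section~\ref{S:realization}, where $v$ is the leading eigenvector of $\omega$ computed in the proof of Proposition~\ref{P:ccubic} and $X$ is the cuspidal cubic. This is the concrete instance of Theorem~B(2): the element $\omega$ fails to be realizable on this surface solely because $\omega^{-1}$ sends one nodal root into $\Phi^-_n$, and $s'=s_{e_2-e_8}$ is precisely the reflection through that offending nodal root. Thus I would verify directly the realizability criterion of Theorem~\ref{T:check}, namely that $(s'\circ\omega)^{-1}$ maps every nodal root into $\Phi^+_n$, and then invoke Theorems~\ref{T:harbourne} and \ref{T:autgroup} to produce the automorphism.

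First I would record the structural input. By the proof of Proposition~\ref{P:ccubic} the marked blowup $(S^v,Y^v,\phi^v)$ has exactly the six nodal roots listed there, and a direct check of intersection numbers shows that $\alpha_1:=e_2-e_8$ is orthogonal to each of the other five; indeed all six nodal roots are pairwise orthogonal, since their index supports $\{2,8\},\{3,9\},\{4,10\},\{5,11\},\{6,12\},\{0,1,7,13\}$ are disjoint. Hence $\ell^+(\alpha_1)=1$ and Lemma~\ref{L:nodal0} applies: $s'$ satisfies $s'(\alpha_1)=-\alpha_1$ and fixes every other nodal root. Because $\alpha_1$ is a nodal root, Lemma~\ref{L:nodalv} gives $\alpha_1\cdot v=0$, so $s'(v)=v$; consequently $s'\circ\omega$ again has $v$ as a $\lambda$-eigenvector, whence $s'\circ\omega\in W_n^v$, $\lambda(s'\circ\omega)=\lambda(\omega)$, and the surface $(S^v,Y^v,\phi^v)$ together with its six nodal roots is unchanged.

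Next I would apply Theorem~\ref{T:check} to $\omega':=s'\circ\omega$, using $(\omega')^{-1}=\omega^{-1}\circ s'$. For the offending root, $(\omega')^{-1}(\alpha_1)=\omega^{-1}(-\alpha_1)=-\omega^{-1}(\alpha_1)$; since the obstruction in Proposition~\ref{P:ccubic} is exactly $\omega^{-1}(\alpha_1)\in\Phi^-_n$, this negation lands in $\Phi^+_n$. For each of the remaining five nodal roots $\beta$, which $s'$ fixes, $(\omega')^{-1}(\beta)=\omega^{-1}(\beta)$, and these were already positive because $\alpha_1$ was the unique nodal root violating positivity under $\omega^{-1}$. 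Therefore no nodal root is sent to a negative root, so by Theorem~\ref{T:check} together with Theorems~\ref{T:harbourne} and \ref{T:autgroup} we get $\omega'\in\mathrm{Aut}(S^v,Y^v,\phi^v)$; that is, $s'\circ\omega$ is realized by an automorphism $F$ of $S^v$ fixing the strict transform $Y^v$ of the cuspidal cubic $X$, as claimed.

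The main obstacle is the last verification, that the five nodal roots untouched by $s'$ remain positive under $(\omega')^{-1}$. This rests on two facts that I would make explicit: the orthogonality of $\alpha_1$ to the other nodal roots, so that $s'$ does not disturb them, and the claim that $\alpha_1=e_2-e_8$ is the only nodal root with $\omega^{-1}(\alpha_1)\in\Phi^-_n$. The latter requires computing $\omega^{-1}=s^{-1}\circ\kappa_{1,7,8}$ on each nodal root; this is a short but essential calculation, giving for instance $\omega^{-1}(e_3-e_9)=e_2-e_8\in\Phi^+_n$ and $\omega^{-1}(e_0-e_1-e_7-e_{13})=e_7-e_{12}\in\Phi^+_n$, and confirming both that $\alpha_1$ is the sole defect for $\omega$ and that reflecting by $s'$ removes it.
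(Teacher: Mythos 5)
Your proposal follows essentially the same route as the paper: both realize $s'\circ\omega$ on the surface $(S^v,Y^v,\phi^v)$ of Proposition \ref{P:ccubic}, reduce to Harbourne's criterion via Theorems \ref{T:check}, \ref{T:harbourne} and \ref{T:autgroup}, and use that $s'$ fixes $v$ and negates only the offending nodal root. The one substantive difference is the final verification. The paper exhibits the forward action of $s'\circ\omega$ as a $6$-cycle on the listed nodal roots, whereas you first record that the six nodal roots are pairwise orthogonal (so $s'$ fixes the other five) and then compute $(s'\circ\omega)^{-1}=\omega^{-1}\circ s'$ on each root directly. Your version is actually the more reliable of the two: your values $\omega^{-1}(e_3-e_9)=e_2-e_8$ and $\omega^{-1}(e_0-e_1-e_7-e_{13})=e_7-e_{12}$ check out against $\omega^{-1}=s^{-1}\circ\kappa_{1,7,8}$, while the paper's displayed cycle does not quite compute as written --- for instance $s'\circ\omega(e_6-e_{12})=s'(e_0-e_1-e_8-e_{13})=e_0-e_1-e_2-e_{13}$, not $e_0-e_1-e_7-e_{13}$, and $s'\circ\omega(e_0-e_1-e_7-e_{13})=s'(e_7-e_2)=e_7-e_8$. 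One caveat applies equally to both arguments: everything rests on the eigenvector coincidences and the resulting list of six nodal roots asserted in Proposition \ref{P:ccubic} (in particular that $e_2-e_8$ really satisfies $(e_2-e_8)\cdot v=0$, which is what makes $s'\in\Sigma_v$ and $s'(v)=v$); you take that input as given, exactly as the paper does, so it is worth flagging that your proof is only as solid as that earlier computation.
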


\begin{proof}
Notice that $(e_2-e_{8}) \cdot v = 0 $. Thus $\omega$ and $s' \circ \omega$ have the same spectral radius, and an eigenvector $v$ for $\omega$ associated the spectral radius $\lambda(\omega)$ also satisfies
\[ s' \circ \omega (v) = \lambda(\omega) v.\]
Furthermore, we have
\[ s' \circ \omega : e_2-e_8 \to e_3-e_9 \to e_4-e_{10} \to e_5-e_{11} \to e_6-e_{12} \to e_0 -e_1-e_7-e_{13} \to e_2-e_8. \]
Thus $(s' \circ\omega)^{-1}$ preserves the positivity of nodal roots. By Theorem \ref{T:harbourne}, we see that $s'\circ \omega \in (S^v, Y^v, \phi^v)$.
\end{proof}




\bibliographystyle{plain}
\bibliography{biblio}

 \end{document}